\let\Re=\undefined\DeclareMathOperator*{\Re}{Re}
\let\Im=\undefined\DeclareMathOperator*{\Im}{Im}
\newcommand{\R}{\mathbb{R}}
\newcommand{\C}{\mathbb{C}}
\newcommand{\F}{\mathcal{F}}
\newtheorem{lemma}{Lemma}[section]
\newtheorem{proposition}{Proposition}[section]
\newtheorem{theorem}{Theorem}[section]
\numberwithin{equation}{section}
\newcommand{\eps}{\varepsilon}
\newcommand{\qtq}[1]{\quad\text{#1}\quad}
\begin{document}

\title[Scattering for DMNLS]{Small and large data scattering \\ for the dispersion-managed NLS}
\author{Jumpei Kawakami}
\address{Division of Mathematics and Mathematical Sciences, Kyoto University}
\email{jumpeik@kurims.kyoto-u.ac.jp}

\author{Jason Murphy}
\address{Department of Mathematics, University of Oregon}
\email{jamu@uoregon.edu}

\maketitle

\begin{abstract} We prove several scattering results for dispersion-managed nonlinear Schr\"odinger equations.  In particular, we establish small-data scattering for both `intercritical' and `mass-subcritical' powers by suitable modifications of the standard approach via Strichartz estimates.  In addition, we prove scattering for arbitrary data in a weighted Sobolev space for intercritical powers by establishing a pseudoconformal energy estimate.  We also rule out (unmodified) scattering for sufficiently low powers.  Finally, we give some remarks concerning blowup for the focusing equation. 
\end{abstract}

%%%%%%%%%%%
\section{Introduction}

Our interest in this work is in the scattering theory for dispersion-managed nonlinear Schr\"odinger equations (NLS).  Specifically, we consider equations of the form
\begin{equation}\label{nls}
i\partial_t u + \Delta u = \int_0^1 e^{-i\sigma\Delta}\bigl[|e^{i\sigma\Delta}u|^p e^{i\sigma\Delta} u\bigr]\,d\sigma,\quad (t,x)\in\R\times\R^d.
\end{equation}
This model arises in the field of nonlinear optics, particularly in the setting of `dispersion management' and in the investigation of optical solitons.  The particular model under consideration is sometimes called the `Gabitov--Turitsyn' equation, and arises in the \emph{strong dispersion management} regime.  The relevant literature is quite extensive by now, but we would like to refer the reader to \cite{Agrawal, Kurtzke,CL, Lushnikov, GT1, HL, PZ, ZGJT, CHL, CL} for a representative sample of relevant results, and particularly point the reader to the introduction of \cite{CL} for a succinct derivation of the particular equation \eqref{nls} from the more general dispersion-managed model.  %For an even wider range of mathematical results related to dispersion-managed NLS, we refer the reader to \cite{}.

\bigskip

\textbf{Small-data scattering.} Our first main results concern the small-data global well-posedness and scattering for \eqref{nls}.  In this regime the sign of the nonlinearity in \eqref{nls} is irrelevant, but we stick with the $+$ sign (corresponding to the defocusing case) for the sake of consistency throughout the paper.  The results may be stated as follows:

\begin{theorem}[Small-data scattering, intercritical case]\label{T1} Let $d\geq 1$ and $p\in[\frac{4}{d},\frac{4}{d-2}]$ (with $p\in[\tfrac{4}{d},\infty)$ in dimensions $d\in\{1,2\}$). Let $\varphi \in H^{s_c}(\R^d)$, where
\[
s_c=\tfrac{d}{2}-\tfrac{2}{p}\in[0,1].
\]
If $\||\nabla|^{s_c} \varphi\|_{L^2}$ is sufficiently small, then there exists a unique, global-in-time solution $u:\R\times\R^d\to\C$ to \eqref{nls} with $u|_{t=0}=\varphi$.  Moreover, $u$ \emph{scatters} in $H^{s_c}$.  That is, there exist unique $u_\pm\in H^{s_c}$ such that
\[
\lim_{t\to\pm\infty}\|u(t)-e^{it\Delta}u_\pm\|_{H^{s_c}(\R^d)}=0. 
\]
\end{theorem}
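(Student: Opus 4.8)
\medskip

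\noindent\emph{Proof proposal.} The plan is to run the usual Strichartz contraction on the Duhamel formulation
\[
u(t)=e^{it\Delta}\varphi-i\int_0^t e^{i(t-s)\Delta}F(u(s))\,ds,\qquad F(v)=\int_0^1 e^{-i\sigma\Delta}\bigl[|e^{i\sigma\Delta}v|^p\,e^{i\sigma\Delta}v\bigr]\,d\sigma,
\]
but in an iteration space adapted to the averaging in $F$. The guiding observation is that for $|\sigma|\le 1$ the operators $e^{\pm i\sigma\Delta}$ are isometries of every $\dot H^t$ (in particular of $L^2$), and, when composed with the free flow, merely translate the time variable, under which the global-in-time Strichartz norms are invariant; hence one should always keep $e^{\pm i\sigma\Delta}$ adjacent either to an $L^2$-based norm or to a propagator, never next to a bare $L^r_x$ with $r\ne2$, and then Minkowski extracts the $\sigma$-average over $[0,1]$ at no cost. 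Fixing a non-endpoint admissible pair $(q,r)$, together with the auxiliary (non-endpoint) admissible pairs required by the fractional Leibniz rule, I would iterate in
\[
\|u\|_X=\|u\|_{L^\infty_t H^{s_c}_x}+\sup_{\sigma\in[0,1]}\Bigl(\bigl\||\nabla|^{s_c}e^{i\sigma\Delta}u\bigr\|_{L^q_tL^r_x}+(\text{the analogous }\sup_\sigma\text{ Strichartz terms})\Bigr),
\]
for which the free evolution satisfies $\|e^{it\Delta}\varphi\|_X\lesssim\|\varphi\|_{H^{s_c}}$ with constant independent of the auxiliary shift $\sigma$, by the translation invariance just noted.

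For the Duhamel term --- and for its image under any $e^{i\tau\Delta}$, $\tau\in[0,1]$ --- I would first commute the operators $e^{-i\sigma\Delta}$ and $e^{i(\tau-\sigma)\Delta}$ (exponents in $[-1,1]$) onto the propagator, and then use Minkowski to pull the $\sigma$-average outside the space-time norm, reducing matters to bounding, uniformly in the shift, the operators $H\mapsto\int_0^t e^{i(t-s-\sigma)\Delta}H(s)\,ds$ from the relevant dual Strichartz space into $L^q_tL^r_x$ and into $L^\infty_t L^2_x$. The full-line version factors as $H\mapsto e^{i(t-\sigma)\Delta}\!\int_\R e^{-is\Delta}H(s)\,ds$, i.e.\ through $L^2_x$ via the dual and direct Strichartz inequalities, hence is bounded with a $\sigma$-independent constant; the Christ--Kiselev lemma upgrades this to the retarded operator (this is where $q$ strictly greater than the dual time exponent is used, forcing exclusion of the endpoint pair), while the $L^\infty_t L^2_x$ bound needs no truncation argument. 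Assembling, one obtains
\[
\|u\|_X\lesssim\|\varphi\|_{H^{s_c}}+\int_0^1\bigl\|(1+|\nabla|^{s_c})\bigl(|e^{i\sigma\Delta}u|^p\,e^{i\sigma\Delta}u\bigr)\bigr\|_{L^{q'}_tL^{r'}_x}\,d\sigma\lesssim\|\varphi\|_{H^{s_c}}+\sup_{\sigma\in[0,1]}\bigl\|(1+|\nabla|^{s_c})(\cdots)\bigr\|_{L^{q'}_tL^{r'}_x},
\]
and for each fixed $\sigma$ the integrand is estimated by $\lesssim\|u\|_X^{p+1}$ using the fractional Leibniz rule, H\"older in time, and Sobolev embedding exactly as in the classical (non-dispersion-managed) case, since $e^{i\sigma\Delta}$ commutes with $|\nabla|^{s_c}$ and is accounted for by the $\sup_\sigma$ built into $\|u\|_X$.

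With this multilinear estimate and its difference analogue, the contraction-mapping theorem applies once $\|\varphi\|_{H^{s_c}}$ is small (the critical quantity being $\||\nabla|^{s_c}\varphi\|_{L^2}$, with the $L^2$ part carried along subcritically), yielding the unique global $u\in X$; uniqueness in the natural class follows by the usual continuity/bootstrap argument. Scattering then reduces to a tail estimate: writing $e^{-it\Delta}u(t)=\varphi-i\int_0^t e^{-is\Delta}F(u(s))\,ds$ and keeping $e^{-i\sigma\Delta}$ next to the $L^2$-based norm, the dual Strichartz inequality on $[T,T']$ gives
\[
\bigl\|e^{-iT\Delta}u(T)-e^{-iT'\Delta}u(T')\bigr\|_{H^{s_c}}\lesssim\int_0^1\bigl\|(1+|\nabla|^{s_c})\bigl(|e^{i\sigma\Delta}u|^p\,e^{i\sigma\Delta}u\bigr)\bigr\|_{L^{q'}_t([T,T'])L^{r'}_x}\,d\sigma\longrightarrow 0
\]
as $T,T'\to\pm\infty$, by dominated convergence against the global finiteness of $\|u\|_X$; hence $u_\pm=\lim_{t\to\pm\infty}e^{-it\Delta}u(t)$ exist in $H^{s_c}$.

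I expect the main obstacle to be precisely the interaction between the auxiliary $\sigma$-averaging in $F$ and the Strichartz machinery: one cannot simply place $F(u)$ in an exotic dual Strichartz norm and strip off $e^{-i\sigma\Delta}$, since that operator is unbounded on $L^{r'}_x$ for $r'\ne 2$. The device ``commute onto the propagator, factor through $L^2$ on the full line, retard via Christ--Kiselev, then Minkowski in $\sigma$'' is the needed modification of the standard scheme, and it is what dictates the $\sup_{\sigma\in[0,1]}$ in $\|u\|_X$ and the avoidance of endpoint Strichartz pairs. The remaining care concerns the endpoints of the stated range --- $p=\tfrac4d$ (mass-critical, $s_c=0$) and, for $d\ge3$, $p=\tfrac4{d-2}$ (energy-critical, $s_c=1$) --- where the admissible pairs must be chosen so that both Christ--Kiselev and the $s_c\in\{0,1\}$ fractional-calculus estimates close (a diagonal non-endpoint pair in the mass-critical case, staying slightly off the endpoint pair in the energy-critical case).
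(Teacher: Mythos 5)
Your proposal is correct and follows the same overall strategy as the paper: iterate in a $\sup_{\sigma\in[0,1]}$ (equivalently $L^\infty_\sigma$) of Strichartz norms $\|e^{i\sigma\Delta}|\nabla|^{s_c}u\|_{L^q_tL^r_x}$, commute the auxiliary propagators through the Duhamel integral, pull the $\sigma$-average outside by Minkowski, and close via the fractional chain rule, Sobolev embedding, and a small-data contraction; scattering then follows from the same tail/Cauchy argument by dominated convergence. The single genuine point of divergence is how the uniform-in-shift retarded estimate for $H\mapsto\int_0^t e^{i(t-s+\theta-\sigma)\Delta}H(s)\,ds$ is obtained. You propose factoring the unretarded operator through $L^2_x$ via the dual and direct Strichartz inequalities (the shift drops out by time translation and unitarity on $L^2_x$) and then invoking Christ--Kiselev to pass to the truncated operator. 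The paper instead proves a ``shifted Strichartz estimate'' (Proposition~2.2) from scratch by the standard non-endpoint scheme: duality, the dispersive bound $\|e^{i\tau\Delta}\|_{L^{r'}_x\to L^r_x}\lesssim|\tau|^{-d(1/2-1/r)}$, translation invariance of the convolution kernel in time, and Hardy--Littlewood--Sobolev (in fact in Lorentz spaces, needed elsewhere). Both routes deliver the same estimate with the same non-endpoint restriction ($q>2$); the paper's is self-contained and avoids Christ--Kiselev as a black box, while yours is shorter given standard machinery and makes especially transparent why the shift is harmless. All other steps of your proposal agree with the paper's, modulo cosmetic differences (the paper keeps $L^\infty_t H^{s_c}_x$ out of the iteration norm and derives it a posteriori).
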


\begin{theorem}[Small-data scattering, mass-subcritical case]\label{T2} Let $d\geq 1$ and $p\in(\tfrac{2}{d},\tfrac{4}{d})\cap[\tfrac{4}{d+2},\tfrac{4}{d})$.  Let $\varphi\in \F H^{\gamma}(\R^d)$, where
\[
\gamma = \tfrac{2}{p}-\tfrac{d}{2}\in(0,1]. 
\]
If $\| |x|^\gamma\varphi\|_{L^2}$ is sufficiently small, then there exists a unique, global-in-time solution $u:\R\times \R^d\to\C$ to \eqref{nls} with $u|_{t=0}=\varphi$.  Moreover, $u$ \emph{scatters} in $\F H^{\gamma}$.  That is, there exist unique $u_\pm\in \F H^{\gamma}$ such that 
\[
\lim_{t\to\pm\infty}\|e^{-it\Delta}u(t)-u_\pm\|_{\F H^{\gamma}(\R^d)}=0. 
\]
\end{theorem}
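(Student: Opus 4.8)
The setup: we want to work in the "pseudoconformal" or Galilei-type framework adapted to the mass-subcritical regime. The natural move is to pass to the interaction representation $v(t) = e^{-it\Delta}u(t)$, which satisfies $i\partial_t v = e^{-it\Delta}F(e^{it\Delta}v)$ where $F$ is the dispersion-managed nonlinearity $F(f) = \int_0^1 e^{-i\sigma\Delta}[|e^{i\sigma\Delta}f|^p e^{i\sigma\Delta}f]\,d\sigma$. Scattering in $\F H^\gamma$ is exactly the statement that $v(t)$ converges in $\F H^\gamma$ as $t\to\pm\infty$. Equivalently, conjugating by the Fourier transform, $\hat v$ should converge in $H^\gamma$. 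The key structural fact to exploit is the identity $e^{-it\Delta} = M(t)\, D(t)\, \F\, M(t)$ (with $M(t)$ multiplication by $e^{i|x|^2/4t}$ and $D(t)$ an $L^2$-unitary scaling), which lets one relate weighted norms of $u$ to Sobolev norms of a rescaled profile, and lets us run the contraction in a space of the form $L_t^q \F H^\gamma_x$ intersected with an appropriate weighted-in-time Strichartz-type space. In short: I would set up a fixed-point argument for $v$ in a norm that controls $\||x|^\gamma e^{-it\Delta}u(t)\|_{L^2}$ uniformly in $t$ together with a space-time norm carrying a time weight $\langle t\rangle^{-\beta}$ that makes the nonlinear term integrable in time near $t=\pm\infty$.

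The plan, step by step. First, record the mapping properties of the dispersion-managed nonlinearity $F$: since $F$ is an average over $\sigma\in[0,1]$ of conjugated standard nonlinearities and $e^{i\sigma\Delta}$ is unitary on $L^2$-based spaces, $F$ inherits, after integrating in $\sigma$, the same $L^{r'}\to L^{r'}$-type bounds and difference estimates as the usual power nonlinearity $|z|^p z$, uniformly in $\sigma$; in particular Minkowski's inequality in $\sigma$ costs nothing. (This is the place where the dispersion management is essentially harmless; the constraint $p \le \frac{4}{d}$ and $p \ge \frac{4}{d+2}$ in the hypothesis is precisely what makes the relevant Strichartz exponents admissible.) Second, using the factorization $e^{it\Delta} = M(-t)\,D(t)\,\F\,M(-t)$, rewrite $e^{-it\Delta}F(e^{it\Delta}v)$ and estimate its $\F H^\gamma$ norm: the key gain is a factor $|t|^{-d p/2 \cdot \text{(something)}}$ — more precisely, the scaling operator $D(t)$ produces a negative power of $|t|$ from the nonlinearity, and the hypothesis $p>\frac{2}{d}$ is exactly the threshold making that power summable/integrable in $t$ over $[1,\infty)$. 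Third, handle the fractional weight $|x|^\gamma$: commute it through $M(t)$ (harmless, since $|x|^\gamma$ commutes with multiplication operators) and through $D(t)$ (produces a power of $|t|$), and control $\||\xi|^\gamma(\widehat{\cdots})\|_{L^2}$ via a fractional Leibniz / Kato–Ponce estimate applied to the product $|z|^p z$, using $\gamma \le 1$ so that only low-regularity fractional product rules are needed. Fourth, assemble these into a contraction estimate on the ball of radius $\sim\||x|^\gamma\varphi\|_{L^2}$ in the chosen norm $X = \{v : \|v\|_X := \sup_t \||x|^\gamma e^{-it\Delta}u\|_{L^2} + (\text{weighted space-time piece}) < \infty\}$; smallness of the data closes the fixed point globally in time. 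Fifth, from $v\in X$ deduce Cauchy-in-time of $v(t)$ in $\F H^\gamma$ by integrating the Duhamel tail and using the time-integrability established in steps two and three; this yields the limits $u_\pm = \lim e^{-it\Delta}u(t)$ in $\F H^\gamma$, and uniqueness is immediate from the contraction.

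The main obstacle I expect is the fractional weight, i.e. controlling $\||x|^\gamma$ of the nonlinearity when $0<\gamma<1$ and $\gamma$ need not be an integer. For integer weights one could simply commute $x\cdot$ (or $J(t) = x + 2it\nabla$, the Galilei generator) through the equation and use the Leibniz rule, but here one must instead use that $|x|^\gamma$, conjugated by $e^{-it\Delta}$, becomes (up to the unitaries $M(t), D(t)$) essentially $|\nabla|^\gamma$ acting on a rescaled profile, and then invoke a fractional product rule for $|\nabla|^\gamma(|z|^p z)$. The delicacy is twofold: (i) the nonlinearity $|z|^p z$ is only $C^{1,p}$ (not smooth) when $p<1$, so the fractional Leibniz rule must be applied in the form valid for non-smooth nonlinearities with $\gamma \le 1 \le p+1$, which is exactly why the hypothesis forces $\gamma\le 1$ and $p\ge \frac{4}{d+2}$ (ensuring $\gamma \le p+1$ is comfortable and the Hölder exponents land in the admissible range); and (ii) one must carefully track how the time-dependent scaling $D(t)$ interacts with $|\nabla|^\gamma$ so that the net power of $|t|$ is still negative and integrable — this bookkeeping, combined with the $\sigma$-average, is the computational heart of the argument. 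Everything else (the Strichartz estimates, the contraction mechanics, extracting the scattering states) is routine once these weighted nonlinear estimates are in hand.
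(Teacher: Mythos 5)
Your high-level plan is in the right spirit: the identity
\[
J^\gamma(t) := e^{it\Delta}|x|^\gamma e^{-it\Delta} = e^{\frac{i|x|^2}{4t}}(-4t^2\Delta)^{\frac{\gamma}{2}}e^{-\frac{i|x|^2}{4t}}
\]
is exactly the reconciliation of your ``lens-transform / interaction-representation'' picture with the paper's use of the fractional Galilean operator, and your identification of the roles of $p>\tfrac{2}{d}$ (integrability of the time decay, equivalently $p\gamma<1$) and $p\geq\tfrac{4}{d+2}$ (so that $\gamma\leq 1$ and the fractional chain rule applies at low regularity) is correct. The paper also runs a fixed point in a space built to control a uniform-in-$\theta$ norm of $e^{i\theta\Delta}J^\gamma u$ and closes via a chain rule for $J^\gamma$ and a Klainerman--Sobolev estimate, so structurally there is no real divergence.

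The genuine gap in your proposal is the Lorentz-space endpoint issue, which you do not mention and which is not optional at the critical weight. After the Klainerman--Sobolev step, the nonlinear estimate requires you to place a factor $|t+\sigma|^{-p\gamma}$ in time. The scaling relation the contraction forces is $\tfrac{p+1}{q}+p\gamma=\tfrac{1}{q'}$, which is sharp: there is no room to spare a H\"older exponent. But $|t|^{-p\gamma}$ lies only in the weak space $L_t^{\frac{1}{p\gamma},\infty}(\R)$, not in $L_t^{\frac{1}{p\gamma}}$, so the H\"older inequality you need fails in ordinary Lebesgue spaces and the contraction does not close. The paper resolves this by working throughout in Lorentz spaces $L_t^{q,2}$ (with the Strichartz and shifted Strichartz estimates upgraded accordingly) and invoking H\"older/Young in Lorentz spaces, which permits $|t+\sigma|^{-p\gamma}\in L_t^{\frac{1}{p\gamma},\infty}$ and one factor of $e^{i\sigma\Delta}J^\gamma u$ in $L_t^{q,\infty}$ while the remaining factor stays in $L_t^{q,2}L_x^r$. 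Your ``weighted-in-time Strichartz-type space'' will hit exactly this obstruction; either you must pass to Lorentz spaces as the paper does, or you must strengthen the decay hypothesis (replace $\F H^\gamma$ by $\F H^{\gamma+\eps}$), which gives up the sharp result stated in the theorem.

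A smaller omission: you treat the $\sigma$-average as ``essentially harmless via Minkowski,'' but the paper's fixed-point norm is $\|e^{i\theta\Delta}J^\gamma u\|_{L_\theta^\infty L_t^{q,2}L_x^r}$ and the Duhamel term is estimated via a \emph{shifted} Strichartz estimate for the propagator $e^{i(t-s+\theta-\sigma)\Delta}$. Choosing the function space to carry an $L_\theta^\infty$ norm (so that the ambient averaging becomes a bounded operation on the iteration space) is part of what makes the contraction close; it deserves to be made explicit in your plan rather than subsumed under Minkowski.
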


Theorems~\ref{T1}~and~\ref{T2} parallel sharp scattering results for the standard power-type NLS (see e.g. \cite{CW}). The parameters $s_c,\gamma$ are the critical regularity/weight associated with the NLS with nonlinearity $|u|^p u$, obtained by considering the $L^2$-based spaces of data whose norms are invariant under the rescaling that preserves the class of solutions. Theorems~\ref{T1}~and~\ref{T2} give small-data scattering in the appropriate space for critical regularity/weight taking values between $0$ and $1$. The additional constraint $p>\tfrac{2}{d}$ in Theorem~\ref{T2} corresponds to the `short-range' case for the standard NLS; we are also able to prove that scattering fails below this exponent (see Theorem~\ref{T4} below).

We were inspired to consider the small-data scattering problem for \eqref{nls} by the recent work of \cite{ChoiLeeLeeScattering}.  In that work, the authors establish scattering for $p=\tfrac{4}{d}$ (the `mass-critical exponent' for NLS) in dimensions $d=1,2$ for small data in $L^2$. Their strategy was to employ the $U^p$, $V^p$ atomic spaces, observing that these spaces are compatible with the structure of the DMNLS nonlinearity.  At a technical level, the key ingredient in \cite{ChoiLeeLeeScattering} was a bilinear Strichartz estimate for linear solutions (extended to the $U^p$ space by the transference principle). 

In this paper, we demonstrate that a straightforward modification of the standard approach is sufficient to recover a suitable small-data scattering theory.  As the nonlinearity involves averaging with via the free propagator over a unit time interval, we choose function spaces that incorporate such an averaging.  Thus, instead of proving a bound of the form $u\in L_t^q L_x^r(\R\times\R^d)$, we instead prove $e^{i\theta\Delta}u \in L_\theta^\infty L_t^q L_x^r([0,1]\times\R\times\R^d)$.   To handle the effect of $e^{i\theta\Delta}$ in the estimates, we use a slight modification of the standard Strichartz estimate (inspired by \cite{Kawakami}), which we refer to throughout this paper as a `shifted Strichartz estimate' (see Proposition~\ref{P:shifted} below). 

The proof of Theorem~\ref{T1} requires estimating derivatives of the solution.  This is straightforward, as derivatives commute with the free propagator.  For Theorem~\ref{T2} (in the mass-subcritical case), we instead need to use (fractional) powers of the Galilean operator $J(t)=x+2it\nabla$ and understand how these operators commute with the free propagator and interact with the DMNLS nonlinearity.  Ultimately, we find that these operators are compatible with the structure of the nonlinearity; in fact, this was already observed in \cite{MVH}, which established a small-data modified scattering result for the $1d$ cubic problem.  We also remark that in order to work with minimal decay assumptions, we need to utilize Lorentz spaces in time (in order to deal with a weak singularity in time that stems from the use of a Sobolev embedding estimate involving the Galilean operator). 

\bigskip

\textbf{Large-data scattering.} Our next main result concerns the large-data scattering theory for \eqref{nls} in the defocusing regime.  We prove scattering for intercritical powers (including the `energy-critical' power $p=\tfrac{4}{d-2}$) for arbitrary data chosen from the weighted space $\Sigma := H^1\cap \F H^1$. We cover the full `mass-supercritical' range, except for a small gap in dimension $d=1$.  In particular, our arguments require $p>p_0:=3+\sqrt{5}\approx 5.2361$ in dimension $d=1$.

% In particular, we define the exponent $\alpha(d)\in(\tfrac{4}{d},\tfrac{4}{d-2})$ (with $\alpha(d)<\infty$ in dimensions $d\in\{1,2\}$) via
%\begin{equation}\label{new-strauss}
%\alpha(d)=\tfrac{6-d+\sqrt{d^2+4d+36}}{2d}. 
%\end{equation}
Our main result may be stated as follows:

\begin{theorem}\label{T3} Let $d\geq 1$ and $\tfrac{4}{d}\leq p\leq\tfrac{4}{d-2}$ (with $p<\infty$ in dimensions $d\in\{1,2\}$).  

For any $u_0\in\Sigma$, there exists a unique, global-in-time solution $u:\R\times\R^d\to\C$ to \eqref{nls} with $u|_{t=0}=u_0$. 

Moreover, if $p$ satisfies
\[
\begin{cases} p_0<p<\infty & d=1, \\ 2<p<\infty & d=2, \\ \tfrac{4}{d}<p\leq\tfrac{4}{d-2} & d\geq 3,\end{cases}
\]
then $u$ scatters in $\Sigma$ as $t\to\pm\infty$.  That is, there exist unique $u_\pm\in\Sigma$ such that
\[
\lim_{t\to\infty}\|e^{-it\Delta}u(t)-u_\pm\|_{\Sigma} = 0. 
\]
\end{theorem}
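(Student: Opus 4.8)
The plan is to prove this by the standard conformal-energy/Morawetz strategy adapted to the DMNLS structure, working throughout with the shifted spacetime norms $e^{i\theta\Delta}u\in L_\theta^\infty L_t^q L_x^r$ that are compatible with the nonlinearity. First I would establish local well-posedness in $\Sigma=H^1\cap\F H^1$ via a contraction mapping argument using the shifted Strichartz estimates of Proposition~\ref{P:shifted}, tracking both $\nabla u$ (which commutes with $e^{i\sigma\Delta}$ and with the averaging) and $J(t)u=(x+2it\nabla)u$ (which, as noted in the introduction following \cite{MVH}, is compatible with the DMNLS nonlinearity: $J(t)$ acting on the averaged nonlinear term produces the analogous averaged expression in $J(t)u$, up to controllable commutator terms coming from the $\sigma$-shift). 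This yields a local solution with a blowup criterion phrased in terms of the relevant shifted Strichartz norm.

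Next I would derive the conservation laws and the pseudoconformal (virial-type) identity. Mass $\|u(t)\|_{L^2}^2$ and energy $E(u(t))=\tfrac12\|\nabla u\|_{L^2}^2+\tfrac1{p+2}\int_0^1\|e^{i\sigma\Delta}u\|_{L^{p+2}}^{p+2}\,d\sigma$ are conserved (the nonlinear potential term is manifestly nonnegative in the defocusing case, which immediately gives the a priori bound $\|\nabla u(t)\|_{L^2}\lesssim 1$ and hence, with mass conservation, global existence in $H^1$). The key new ingredient is a pseudoconformal energy estimate: defining $\mathcal P(t)=\|J(t)u(t)\|_{L^2}^2+c\,t^2\int_0^1\|e^{i\sigma\Delta}u\|_{L^{p+2}}^{p+2}\,d\sigma$ (or the closely related weighted energy $\||x|u(t)+\dots\|^2$), one computes $\tfrac{d}{dt}\mathcal P(t)$ and finds that for $p$ in the mass-supercritical range the `bad' terms have a favorable sign or are dominated, up to an error term that — because the nonlinearity is an \emph{average} over $\sigma\in[0,1]$ of shifted propagators rather than a pure power — carries an extra factor that must be absorbed. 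This is precisely where the dimension-one restriction $p>p_0=3+\sqrt5$ enters: the commutator/error terms generated when $J(t)$ hits the $\sigma$-averaged nonlinearity behave like a lower power of $t$ times the potential energy, and closing the differential inequality $\mathcal P'(t)\lesssim t^{-1}\mathcal P(t)$ (to get at most polynomial growth, $\mathcal P(t)\lesssim \langle t\rangle^{2-\delta}$ for some $\delta>0$) requires the power to be large enough that the `good' decay of the potential energy, $\int_0^1\|e^{i\sigma\Delta}u\|_{L^{p+2}}^{p+2}\,d\sigma\lesssim t^{-?}$, beats the error; in $d=1$ the threshold is the root of the quadratic giving $p_0$, while in $d\ge2$ the weaker condition $p>\tfrac4d$ (resp. $p>2$ when $d=2$) suffices.

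From the pseudoconformal bound I would extract decay: $\||x|u(t)\|_{L^2}\lesssim\langle t\rangle^{1-\delta/2}$ together with energy conservation and the commutation relation $J(t)=e^{it\Delta}x e^{-it\Delta}$ gives, via the factorization $e^{i\sigma\Delta}u=e^{i(\sigma+t)\Delta}(e^{-it\Delta}u)$ and a Gagliardo--Nirenberg/dispersive estimate, that the spacetime norm $\|e^{i\theta\Delta}u\|_{L_\theta^\infty L_t^q L_x^r}$ is finite on all of $\R$ for the scattering-admissible exponents. (Concretely: interpolate the $t^{-d/2}$-type dispersive decay of $e^{i(\sigma+t)\Delta}$ against the $\Sigma$-bound on $e^{-it\Delta}u$ — whose weighted norm grows slower than $t$ — to get $L_x^{p+2}$-decay of the nonlinearity integrable in time in the relevant Lorentz sense.) Global finiteness of this norm upgrades the local solution to a global one and, by the usual argument, shows that $e^{-it\Delta}u(t)$ is Cauchy in $\Sigma$ as $t\to\pm\infty$: one writes $e^{-it\Delta}u(t)-e^{-is\Delta}u(s)=-i\int_s^t e^{-i\tau\Delta}[\text{nonlinearity}]\,d\tau$ and bounds the right side by the tail of the (finite) shifted Strichartz norm in both $H^1$ and $\F H^1$, the latter using the $J(t)$-estimates. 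The limits $u_\pm$ are the asymptotic states.

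The main obstacle I anticipate is the pseudoconformal identity for the \emph{averaged} nonlinearity and the resulting bookkeeping of error terms: for the ordinary power NLS the virial computation is clean because $x\cdot\nabla(|u|^p u)$ has an exact algebraic form, but here one must differentiate $\int_0^1 e^{-i\sigma\Delta}[|e^{i\sigma\Delta}u|^p e^{i\sigma\Delta}u]\,d\sigma$ against the dilation generator, and the $e^{\pm i\sigma\Delta}$ factors do not commute with $x\cdot\nabla$; the commutators produce terms involving $\sigma$-weighted propagators applied to the nonlinearity, and controlling their time growth — in particular pinning down the exact exponent that forces $p_0=3+\sqrt5$ in $d=1$ — is the technical heart of the argument. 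A secondary difficulty is doing all of this uniformly in the shift parameter $\theta$ (and in $\sigma$), which is what the $L_\theta^\infty$ outer norm and the shifted Strichartz estimate are designed to handle, but it requires care in the Leibniz rule and fractional-chain-rule steps since the `derivative' $J(t)$ is time-dependent.
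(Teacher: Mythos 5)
Your overall architecture matches the paper (local well-posedness $\Rightarrow$ conservation laws $\Rightarrow$ pseudoconformal energy estimate $\Rightarrow$ decay $\Rightarrow$ global spacetime bounds $\Rightarrow$ scattering), but you have misplaced the source of the constraint $p>p_0=3+\sqrt 5$ in $d=1$, and this obscures a nontrivial step in the argument. In the paper, the Gronwall argument from the pseudoconformal energy identity closes for the \emph{entire} range $p>\tfrac{4}{d}$ (including $d=1$, $4<p\leq 8$): the identity has main coefficient $-[\tfrac{dp}{2}-4]$ rather than the usual $-[\tfrac{dp}{2}-2]$, and the extra lower-order terms (one involving $\sigma|w|^{p+2}$, one involving $|w(1)|^{p+2}$) are shown to contribute only a $\langle t\rangle^{-2}e(t)$ term that is integrable in the Gronwall. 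The output is $\|Ju\|_{L^2}\lesssim \langle t\rangle^{2-dp/4}$ for $\tfrac{4}{d}<p\leq\tfrac{8}{d}$ (and bounded for $p>\tfrac8d$), with no dimension-one obstruction at this stage.

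The real obstruction, and the step you do not fully articulate, is converting the polynomial growth bound on $\|Ju\|_{L^2}$ into a \emph{critical} spacetime bound of the form $e^{i\sigma\Delta}u\in L^{q_c}_{\sigma,t}L^{r_c}_x$. The paper does this by combining a Klainerman--Sobolev embedding with a Strichartz estimate in the $(\sigma,x)$ variables and the interpolated bound $\|J^s u\|_{L^2}\lesssim\langle t\rangle^{s\,c_1}$; the decay $\langle t\rangle^{-s_c(1-c_1)}$ that results must be time-integrable against $L_t^{q_c}$ with $(q_c,r_c)$ on the critical scaling line, \emph{and} one must simultaneously have a `companion' nonendpoint Strichartz pair $(q,r)$ satisfying $\tfrac{p}{q_c}+\tfrac{2}{q}=1$. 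In $d\geq 2$ this works for all $p>\tfrac4d$; in $d=1$ the companion pair hits the $L_t^4$ endpoint already at $p=6$, forcing a modified space $L_t^{2p}L_\sigma^\infty L_x^p$ whose long-time norm is finite only when $(p-2)\cdot\tfrac{p-4}{4}>1$, i.e.\ $p>3+\sqrt5$. So the constant $p_0$ is a byproduct of the companion-pair/integrability bookkeeping, not of closing the pseudoconformal Gronwall. Your plan, as written, would suggest the Gronwall itself fails for $4<p\leq p_0$ in $d=1$, which is not the case, and it omits the companion-pair construction that is actually the load-bearing part of the $d=1$ analysis.

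Two smaller points. First, the pseudoconformal identity in the paper is obtained by directly expanding $\tfrac{d}{dt}\|Ju\|_{L^2}^2$ and manipulating $J^2(\sigma)$ acting on the nonlinearity via unitarity of $U(\sigma)$ — there is no literal commutator of $x\cdot\nabla$ with $e^{\pm i\sigma\Delta}$; the key observation is the identity $\partial_\sigma|w|^{p+2}=(p+2)\Im(|w|^p w\Delta\bar w)$, which produces, after integrating by parts in $\sigma$, both the favorable $\sigma$-weighted term and the boundary term $\int|w(1)|^{p+2}$. Second, the scattering step is not done by directly bounding the shifted $L_\theta^\infty L_t^q L_x^r$ norm from the decay estimate; rather it is a small-interval bootstrap: one partitions time into intervals on which the critical $L_{\sigma,t}^{q_c}L_x^{r_c}$ norm is small, bounds $e^{i\theta\Delta}u$, $e^{i\theta\Delta}\nabla u$, and $e^{i\theta\Delta}Ju$ in $L_\theta^{q_c}L_t^q L_x^r$ on each subinterval (absorbing a small nonlinear contribution), and then concludes Cauchy-ness of $e^{-it\Delta}u$ in $\Sigma$ via the tail of the finite critical norm. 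This also requires the a priori input of $Ju\in L_t^\infty L_x^2$ — which the decay estimate provides — since the $\F H^1$ component of $\Sigma$-scattering cannot be extracted from mass and energy alone.
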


Global well-posedness for the range of powers considered in Theorem~\ref{T3} follows from the standard contraction mapping argument, incorporating a Strichartz estimate in the $(\sigma,x)$ variables. We find that for the full range $\tfrac{4}{d}\leq p\leq \tfrac{4}{d-2}$, we obtain a local existence time that depends only on the $H^1$-norm of the initial data (even for $p=\tfrac{4}{d-2}$).  In particular, it seems that the averaging effect of the nonlinearity is helpful for short times.  Global well-posedness then follows from the conservation of mass and energy (in the defocusing case). 

For the proof of scattering, we adapt the pseudoconformal energy estimate used for the standard NLS (see e.g. \cite{Cazenave, MurphyExpository}).  The calculations share some similarities to those carried out in \cite{ChoiHongLee}, which carried out a virial calculation for \eqref{nls} in one space dimension to establish a global existence/blow up dichotomy below a ground state threshold in the focusing case (more on this below). 

The pseudoconformal energy estimate is essentially based on calculating the time derivative of the quantity $\|Ju\|_{L^2}^2$ (where $J(t)=x+2it\nabla$ is the Galilean vector field introduced above), utilizing the conservation of energy and the Gronwall inequality.  For power-type nonlinearities, one obtains a decay estimate on the $L_x^{p+2}$-norm of the solution for any power $p>0$. These estimates imply critical space-time bounds (which then yield scattering) exactly when $p$ exceeds the `Strauss exponent' $\alpha_0(d)=\tfrac{2-d+\sqrt{d^2+12d+4}}{2d}$. We refer the reader to \cite{MurphyExpository} for more details.

In the setting of \eqref{nls}, one can also obtain an analogue of the pseudoconformal energy identity (see Lemma~\ref{PCE}), albeit with some key differences.  First, the coefficient for the main term in the derivative of the pseudoconformal energy is different, involving the factor $-[\tfrac{dp}{2}-4]$ rather than $-[\tfrac{dp}{2}-2]$.  Second, due to the structure of the nonlinearity, one also encounters `lower order' terms without a `good sign'.  As we will show, the lower order terms lead to an integrable contribution in the Gronwall argument, and thus do not change the argument in an essential way.  However, due to the different coefficient in the main term, we obtain new estimates only in the regime $p>\tfrac{4}{d}$.  

In particular, we obtain a bound on the growth of $Ju$ in $L^2$, as well as a decay estimate for the $L_{\sigma,x}^{p+2}$-norm of $e^{i\sigma\Delta}u(t)$ (see Proposition~\ref{P:decay}), both of which degenerate to the trivial bound as $p\downarrow \tfrac{4}{d}$).  Although the decay estimates we obtain are weaker than the corresponding estimates for the standard NLS, they are still sufficient to obtain critical (averaged) space-time bounds (and thus scattering) essentially for the entire mass-supercritical range.  In particular, by utilizing a Klainerman--Sobolev estimate, a Strichartz estimate in the $(\sigma,x)$ variables, and the bound on $Ju$ in $L^2$, we can obtain a critical bound of the form $e^{i\sigma\Delta}u\in L_{t,\sigma}^{q_c} L_x^{r_c}$ for $p>\tfrac{4}{d}$ in dimensions $d\geq 2$ (see Proposition~\ref{P:bounds} below).  Our approach requires some modification in dimension $d=1$, and it is here that we run into some technical obstructions leading to the constraint $p>3+\sqrt{5}$.  Scattering in $1d$ for powers $4<p\leq 3+\sqrt{5}$ remains an open problem.  It is also natural to consider the problem of large-data scattering for the `mass-critical' exponent $p=\tfrac{4}{d}$ in general dimensions $d\geq 1$.

\bigskip

\textbf{No scattering for low powers.} Our next result demonstrates that no (unmodified) scattering is possible if the power of the nonlinearity is sufficiently low.

\begin{theorem}\label{T4} Let $d\geq 1$.  Suppose $0<p\leq 1$ and $p\leq \tfrac{2}{d}$.  If $u$ is a forward-global solution to \eqref{nls} such that 
\[
\lim_{t\to\infty} \|u(t)-e^{it\Delta}\varphi\|_{L_x^2} = 0 
\]
for some $\varphi\in L^2$, then $\varphi\equiv 0$.  The analogous statement holds as $t\to-\infty$. 
\end{theorem}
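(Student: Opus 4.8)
The plan is to run the standard "no-scattering for short-range nonlinearities" argument (going back to Glassey, Barab, Strauss for the classical NLS), adapted to the averaged nonlinearity in \eqref{nls}. Suppose toward a contradiction that $\varphi\not\equiv 0$ and $\|u(t)-e^{it\Delta}\varphi\|_{L^2}\to 0$ as $t\to\infty$. Writing the Duhamel formula relative to $+\infty$, one gets
\[
e^{it\Delta}\varphi = u(t) + i\int_t^\infty e^{i(t-s)\Delta}N(u)(s)\,ds,\qquad N(u)=\int_0^1 e^{-i\sigma\Delta}\bigl[|e^{i\sigma\Delta}u|^p e^{i\sigma\Delta}u\bigr]\,d\sigma,
\]
at least formally; the rigorous version is that $w_\pm := e^{-it\Delta}u(t)$ has a limit $\varphi$ and $\partial_t w = -i e^{-it\Delta} N(u)$, so $\varphi - w(t_0) = -i\int_{t_0}^\infty e^{-is\Delta}N(u)(s)\,ds$ for any fixed $t_0$. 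The goal is to show the integral on the right cannot converge in $L^2$ unless $\varphi\equiv 0$, which forces $u\equiv 0$ and hence $\varphi\equiv 0$.

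The key step is a lower bound on $\|N(u)(s)\|_{L^2}$ for large $s$. First, scattering plus unitarity of $e^{it\Delta}$ gives $\|u(s)\|_{L^2}\to\|\varphi\|_{L^2}=:m>0$, so $\|u(s)\|_{L^2}\geq m/2$ for $s$ large. Next I would use the $L^2$-isometry of $e^{i\sigma\Delta}$ and Minkowski/H\"older in $\sigma$ together with the dispersive decay $\|e^{i\sigma\Delta}f\|_{L^\infty}\lesssim \sigma^{-d/2}\|f\|_{L^1}$ to compare $N(u)$ with $u$ itself. Concretely, since $e^{i\sigma\Delta}$ is unitary on $L^2$,
\[
\|N(u)(s)\|_{L^2} = \Bigl\| \int_0^1 e^{-i\sigma\Delta}\bigl[|e^{i\sigma\Delta}u(s)|^p e^{i\sigma\Delta}u(s)\bigr]\,d\sigma\Bigr\|_{L^2},
\]
and the point is that for $p\le 1$ the map $z\mapsto |z|^p z$ is (locally) H\"older of order $p+1\le 2$, so one can estimate $\bigl| |e^{i\sigma\Delta}u|^p e^{i\sigma\Delta}u - |u|^p u\bigr|$ and, after integrating in $\sigma\in[0,1]$ (a \emph{bounded} range, so no decay is lost), bound $\|N(u)(s)-|u(s)|^p u(s)\|_{L^2}$ by something controlled by $\|u(s)\|_{L^2}$ and norms of $u(s)$ that remain bounded along the scattering trajectory. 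In fact it is cleaner to avoid the pointwise comparison: I would instead exploit that $\langle N(u)(s), e^{is\Delta}\varphi\rangle = \int_0^1 \langle |e^{i\sigma\Delta}u(s)|^p e^{i\sigma\Delta}u(s),\, e^{i\sigma\Delta}e^{is\Delta}\varphi\rangle\,d\sigma$ and that for fixed $\sigma$ the inner product is, by dispersion applied to $e^{i(s+\sigma)\Delta}\varphi$ (whose $L^\infty$ norm decays like $(s+\sigma)^{-d/2}\sim s^{-d/2}$ for $s$ large) and the near-conservation $\|e^{i\sigma\Delta}u(s)\|_{L^{p+1}}^{p+1} = \|e^{i\sigma\Delta}(e^{is\Delta}\varphi + o(1))\|_{L^{p+1}}^{p+1}$, close to the corresponding quantity for the linear flow; pairing $N(u)$ against the known asymptotic state is the device that lets one extract a genuine lower bound of size $\gtrsim s^{-\frac{d p}{2}}$ (up to the slowly-varying $L^{p+1}$-type factors) rather than merely an upper bound. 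Integrating, the condition $p\le \frac2d$ makes $\int^\infty s^{-dp/2}\,ds = \infty$, contradicting convergence of the Duhamel integral in $L^2$ — unless all the relevant linear-flow quantities vanish, i.e. $\varphi\equiv 0$.

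The main obstacle I anticipate is making the lower bound on the paired Duhamel integral rigorous with only the assumption $\varphi\in L^2$: the pure-$L^2$ setting gives us no a priori control on $L^{p+1}$ or $L^1$ norms of $u(s)$, and the dispersive estimate needs $L^1$ (or at least $L^{p'}$) data. The standard fix — which I expect to use — is a density/approximation argument: first establish the identity and the lower bound for $\varphi$ in a dense subclass (say Schwartz, or $L^2\cap L^1$, where $\|e^{it\Delta}\varphi\|_{L^{p+1}}\gtrsim \|e^{it\Delta}\varphi\|_{L^{p+1}}$ behaves like $t^{-d(\frac12-\frac1{p+1})}$ by the explicit dispersive formula and a non-vanishing argument for the linear profile), then observe that the hypothesis "$u$ scatters to $e^{it\Delta}\varphi$" is stable enough — or, more robustly, argue directly that scattering in $L^2$ together with $p\le 1$, $p\le \frac2d$ propagates enough integrability via the Strichartz/shifted-Strichartz framework of Proposition~\ref{P:shifted} to justify all manipulations. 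A secondary technical point is handling the $\sigma$-average: since $[0,1]$ is bounded and $e^{i\sigma\Delta}$ is $L^2$-unitary, Minkowski's inequality in $\sigma$ costs nothing, so the averaging is genuinely harmless here — the short-range obstruction survives the dispersion management exactly as in the classical case. Finally, the $t\to-\infty$ statement is identical after time reversal $u(t)\mapsto \overline{u(-t)}$, which maps solutions of \eqref{nls} to solutions.
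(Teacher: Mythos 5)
Your overall strategy --- pair the Duhamel correction against an asymptotically-free quantity, extract a lower bound of size $t^{-dp/2}$, and integrate to get a contradiction from $\frac{dp}{2}\le 1$ --- is the right one, and you correctly located the genuine difficulty: you want to apply a dispersive estimate, but $\varphi$ is only in $L^2$. Where the proposal breaks down is in the attempted repair. Pairing $N(u)$ against $e^{is\Delta}\varphi$ requires $\|e^{i(s+\sigma)\Delta}\varphi\|_{L^\infty}$ (or an $L^{2/(1-p)}$ version of it) to decay, and this fails for generic $\varphi\in L^2$. The density/approximation idea you sketch is not well-posed: the asymptotic state $\varphi$ is uniquely determined by the given solution $u$, so you cannot replace $\varphi$ by a Schwartz approximant $\varphi_n$ while keeping the hypothesis that $u$ scatters to $e^{it\Delta}\varphi_n$ --- it doesn't, it scatters to $e^{it\Delta}\varphi$. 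Nor will a Strichartz/shifted-Strichartz bootstrap give you $L^\infty$-decay from $L^2$ data. The device that the paper (following Glassey/Strauss) uses is to introduce an \emph{auxiliary} Schwartz function $\psi$, chosen so that $C_0 := \langle |\hat\varphi|^p\hat\varphi,\hat\psi\rangle > 0$ (possible whenever $\varphi\not\equiv 0$), and to track $\partial_t\langle u(t), e^{it\Delta}\psi\rangle$. All dispersive estimates are then applied to $e^{i(t+\sigma)\Delta}\psi$, which is harmless since $\psi\in\mathcal{S}$, while $\varphi$ enters only through the factorization $e^{it\Delta}=M(t)D(t)\F M(t)$ and the $L^2$-continuity of $M(t)\to 1$. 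The main term is shown to equal $C_0\int_0^1[2(t+\sigma)]^{-dp/2}\,d\sigma + o(t^{-dp/2})$, and the error term (from $u-v$ with $v=e^{it\Delta}\varphi$) is $o(t^{-dp/2})$ by the scattering hypothesis and dispersion on $\psi$. Integrating then contradicts the a priori bound $|\langle u(t),e^{it\Delta}\psi\rangle|\lesssim 1$. Without this test-function device, the argument as you have written it cannot be closed for general $\varphi\in L^2$.

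A secondary issue: the time reversal you invoke, $u(t)\mapsto\overline{u(-t)}$, does \emph{not} map solutions of \eqref{nls} to solutions, precisely because the $\sigma$-average is one-sided over $[0,1]$; naive conjugation turns this into an integral over $[-1,0]$. The correct modified time reversal, used in the paper's proof of Theorem~\ref{T5}, is $v(t)=e^{-i\Delta}\overline{u(-t)}$, which after the change of variables $\sigma\mapsto 1-\sigma$ restores \eqref{nls}. You would need that version to transfer the $t\to+\infty$ result to $t\to-\infty$.
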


Theorem~\ref{T4} parallels the corresponding result for the standard power-type NLS (see e.g. \cite{Glassey, Strauss}), and indeed we can follow the same approach as in these works.  The idea of the proof is essentially to show that the time derivative of the inner product of $u(t)$ and $e^{it\Delta}\varphi$ cannot be integrable unless $\varphi\equiv 0$, as otherwise it is of order $|t|^{-\frac{dp}{2}}$.  At a technical level, the proof requires little more than the dispersive estimate for the free Schr\"odinger equation.

Note that Theorem~\ref{T4} does not seem to cover the full range of expected powers, as the proof breaks down in the regime $1<p\leq 2$ in dimension $d=1$.  For the standard NLS in the defocusing case, this gap was closed by the work of \cite{Barab}, which imposed a weighted assumption on the solution and utilized the decay estimates provided by the pseudoconformal energy estimate.  As discussed above, however, the pseudoconformal energy estimate we obtained for \eqref{nls} does not provide any decay estimate if $p\leq \tfrac{4}{d}$.  Thus the range $1<p\leq 2$ in dimension $d=1$ remains open for \eqref{nls}.

\bigskip

\textbf{Remarks on blowup.} Finally, we would like to make some remarks concerning blowup in the focusing case related to the recent work \cite{ChoiHongLee}.  In that work, the authors considered the focusing DMNLS in one space dimension, i.e.
\begin{equation}\label{focusing}
(i\partial_t + \Delta)u + \int_0^1 e^{-i\sigma\Delta}(|e^{i\sigma\Delta}u|^p e^{i\sigma\Delta}u)\,d\sigma=0,\quad (t,x)\in\R\times\R.
\end{equation}
They defined a `ground state threshold' given in terms of the optimal constant in the global Strichartz--Gagliardo--Nirenberg inequality
\[
\|e^{i\sigma\Delta}\varphi\|_{L_{\sigma,x}^{p+2}(\R\times\R)}^{p+2} \leq C_0 \|\varphi\|_{L_x^2}^{\frac{p+8}{2}}\|\partial_x \varphi\|_{L_x^2}^{\frac{p-4}{2}},
\]
which they proved admits an optimizer $Q\in H^1$ satisfying
\[
-Q+\Delta Q + \int_\R e^{-i\sigma\Delta}(|e^{i\sigma\Delta}Q|^p e^{i\sigma\Delta}Q)\,d\sigma=0. 
\]
While uniqueness of such optimizers is not yet known, the sharp constant $C_0$ can be described purely in terms of the mass/energy of any such optimizer. Defining the conserved mass by $M(u)=\|u\|_{L^2}^2$ and the energies
\[
E_I(u) =\tfrac12\int_\R |\nabla u|^2\,dx - \int_I \int_\R |e^{i\sigma\Delta}u|^{p+2}\,dx\,d\sigma,\quad I\subset\R,
\]
the main result of \cite{ChoiHongLee} may be stated as follows:

\begin{theorem}[Main result of \cite{ChoiHongLee}]\label{TCHL} Let $p>8$. Suppose $u_0\in H^1(\R)$ satisfies
\begin{equation}\label{subthreshold}
M(u_0)^{\frac{p+8}{p-8}}E_{[0,1]}(u_0)<M(Q)^{\frac{p+8}{p-8}}E_\R(Q). 
\end{equation}
Let $u:(-T_{-},T_{+})\times\R\to\C$ denote the corresponding maximal-lifespan solution to \eqref{focusing}. 

(i) If 
\[
\|u_0\|_{L^2}^{\frac{p+8}{p-8}}\|\partial_x u_0\|_{L^2} < \|Q\|_{L^2}^{\frac{p+8}{p-8}}\|\partial_x Q\|_{L^2},
\]
then $T_\pm=\infty$ and $u\in L_t^\infty H_x^1(\R\times\R)$. 

(ii) If
\[
\|u_0\|_{L^2}^{\frac{p+8}{p-8}}\|\partial_x u_0\|_{L^2} >\|Q\|_{L^2}^{\frac{p+8}{p-8}}\|\partial_x Q\|_{L^2},
\]
then $u$ blows up backward in time, i.e. $T_{-}<\infty$. 
\end{theorem}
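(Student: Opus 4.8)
The plan is to run the mass--energy threshold argument familiar from the focusing NLS, as in \cite{ChoiHongLee}, with the sharp Strichartz--Gagliardo--Nirenberg inequality playing the role of the usual Gagliardo--Nirenberg inequality and a (localized) virial identity adapted to \eqref{focusing}. Write $s_c=\tfrac12-\tfrac4p$, so $p>8$ gives $s_c\in(0,\tfrac12)$ and $\tfrac{1-s_c}{s_c}=\tfrac{p+8}{p-8}$, and recall that $M(u(t))$ and $E_{[0,1]}(u(t))$ are conserved along \eqref{focusing}. First I would extract from the profile equation for $Q$ the Pohozaev-type identities --- obtained by pairing with $\bar Q$ and with $x\cdot\nabla\bar Q$ and using that dilations commute with the $\sigma$-averaged propagator in the \emph{global} ($\int_\R$) elliptic equation after rescaling $\sigma$ --- which express $\|\partial_x Q\|_{L^2}^2$, $\int_\R\!\int|e^{i\sigma\Delta}Q|^{p+2}$ and $E_\R(Q)$ as explicit positive multiples of $M(Q)$ and the sharp constant $C_0$. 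Combining these with the sharp inequality applied to $u(t)$ and the trivial bound $\int_0^1\!\int|e^{i\sigma\Delta}u|^{p+2}\le\int_\R\!\int|e^{i\sigma\Delta}u|^{p+2}$ yields $E_{[0,1]}(u(t))\ge F\bigl(\|\partial_x u(t)\|_{L^2}^2\bigr)$ with $F(z)=\tfrac12 z-c\,z^{(p-4)/4}$, $c=c(M(u_0),C_0,p)>0$. Since $p>8$ the exponent $(p-4)/4$ exceeds $1$, so $F$ is concave on $(0,\infty)$ with a unique interior maximum at some $z_\ast$, and the identities above identify $M(u_0)^{(p+8)/(p-8)}F(z_\ast)$ with $M(Q)^{(p+8)/(p-8)}E_\R(Q)$ and $z_\ast$ with $M(Q)^{(p+8)/(p-8)}\|\partial_x Q\|_{L^2}^2\big/M(u_0)^{(p+8)/(p-8)}$. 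Thus \eqref{subthreshold} is precisely $E_{[0,1]}(u_0)<F(z_\ast)$; since then $F(\|\partial_x u(t)\|_{L^2}^2)\le E_{[0,1]}(u_0)<F(z_\ast)$ for all $t$ and $t\mapsto\|\partial_x u(t)\|_{L^2}^2$ is continuous, this quantity cannot cross $z_\ast$. Under the hypothesis of (i) it is therefore trapped below $z_\ast$ (indeed $\le(1-\delta)z_\ast$ for some $\delta>0$), while under the hypothesis of (ii) it is trapped above a value $\beta>z_\ast$ and, after the usual openness refinement, one also gets $E_{[0,1]}(u_0)\le(1-\delta)F(z_\ast)$; all bounds are uniform on the lifespan.

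Part (i) is then immediate: $\|\partial_x u(t)\|_{L^2}^2\le(1-\delta)z_\ast$ plus mass conservation give $\sup_t\|u(t)\|_{H^1}<\infty$, and since the local existence time for \eqref{focusing} depends only on $\|u(t)\|_{H^1}$ for $4\le p<\infty$ (as discussed after Theorem~\ref{T3}), the blowup alternative forces $T_\pm=\infty$ and hence $u\in L^\infty_t H^1_x$. For part (ii), suppose toward a contradiction that $T_-=-\infty$. I would use the virial identity for \eqref{focusing} computed in \cite{ChoiHongLee}: for a smooth weight $\phi_R(x)=R^2\phi(x/R)$ equal to $|x|^2$ near the origin and bounded, $\tfrac{d^2}{dt^2}\!\int\phi_R|u|^2$ equals a main term --- with a coefficient negative precisely because $p>8$, the DMNLS analogue of the mass-supercriticality condition (compare the factor $-[\tfrac{dp}{2}-4]$ of Lemma~\ref{PCE} with $-[\tfrac{dp}{2}-2]$ for NLS) --- plus lower-order terms weighted by $\sigma$, coming from commuting $x\cdot\nabla$ through $e^{\pm i\sigma\Delta}$, plus cutoff errors supported in $|x|\gs R$. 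Substituting $E_{[0,1]}(u_0)$ for the nonlinear part via energy conservation and inserting the trapping bounds of (ii), the main part is $\le-c<0$; the cutoff errors are $o_R(1)$ by mass conservation and the a priori $\dot H^1$ bound provided by the trapping; and the $\sigma$-weighted terms --- though of the same order in $\|\partial_x u\|_{L^2}$ as the main term --- are arranged by the same energy substitution to carry a favorable sign on $\sigma\in[0,1]$. Hence $\tfrac{d^2}{dt^2}\!\int\phi_R|u(t)|^2\le-c/2$ for $R$ large and all $t\le0$; integrating twice in the backward direction forces $\int\phi_R|u(t)|^2\to-\infty$ as $t\to-\infty$, contradicting positivity, so $T_-<\infty$.

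The crux is the virial step of part (ii). First, the datum is only assumed in $H^1(\R)$, with no finite-variance hypothesis, so the virial must be truncated and the cutoff errors controlled; this is where one leans on the single space dimension and on $p>8$. Second --- and this is the structural reason one obtains only backward-in-time blowup --- because \eqref{focusing} averages $e^{i\sigma\Delta}$ only over $\sigma\in[0,1]$, the equation is not time-reversible (time reversal turns $\int_0^1$ into $\int_{-1}^0$), and the $\sigma$-weighted lower-order terms coming out of the virial computation carry a definite sign over $\sigma\in[0,1]$; checking that this sign helps rather than hurts --- these terms being a priori the same size as the main term --- is exactly what makes the argument close, and it does so only for $t\le0$. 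By comparison, the variational input, the Pohozaev identities for the nonlocal profile equation, is routine once one uses the exact scaling invariance of the global DMNLS.
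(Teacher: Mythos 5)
This statement is Theorem~\ref{TCHL}, which the paper explicitly attributes to the external reference \cite{ChoiHongLee} and does not prove; the paper's only discussion of the proof is a one-sentence summary, namely that it rests on a virial identity for \eqref{focusing} in which certain terms have a ``good sign'' in only one time direction, which is why only backward blowup is obtained. Your blind attempt is therefore not a proof that can be checked against one in this paper, but a reconstruction of the argument in \cite{ChoiHongLee}. As a reconstruction it is plausible and hits the points the paper emphasizes: the sharp Strichartz--Gagliardo--Nirenberg inequality playing the role of Gagliardo--Nirenberg, Pohozaev identities for the nonlocal profile equation, the mass--energy trapping with the concave function $F(z)=\tfrac12 z - cz^{(p-4)/4}$ (whose exponent $\tfrac{p-4}{4}>1$ for $p>8$ is consistent with the inequality's power $\|\partial_x\varphi\|_{L^2}^{(p-4)/2}$), and a truncated virial since only $u_0\in H^1$ is assumed. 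You also correctly identify the structural reason for one-sided blowup — the $\sigma$-averaging over $[0,1]$ is not symmetric under $t\mapsto -t$, so the $\sigma$-weighted lower-order virial terms have a definite sign that cooperates only for $t\le 0$ — which is precisely the feature the paper's own Theorem~\ref{T5} later neutralizes via the modified time-reversal $v(t)=e^{-i\Delta}\overline{u}(-t)$.

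One caution: you flag the sign of the $\sigma$-weighted virial terms as ``the crux'' but do not actually compute them; this is the only genuinely nontrivial step, and without carrying out the analogue of Lemma~\ref{PCE} for the virial quantity (rather than the pseudoconformal quantity), one cannot verify that these terms help rather than hurt for $t\le 0$. Also note that your $s_c=\tfrac12-\tfrac{4}{p}$ is an ad hoc ``effective'' exponent chosen to reproduce $\tfrac{1-s_c}{s_c}=\tfrac{p+8}{p-8}$; it does not coincide with the paper's $s_c=\tfrac{d}{2}-\tfrac{2}{p}$ (which is tied to the exact scaling of the unaveraged NLS), and since the $\sigma$-averaging over a fixed unit interval destroys exact scale invariance, this quantity has heuristic rather than literal scaling meaning. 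Neither of these is an error — the first is an acknowledged gap, the second a notational choice — but both should be understood as such.
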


The proof in \cite{ChoiHongLee} was based on a virial identity for \eqref{focusing}.  Similar the case of the pseudoconformal energy estimate described above, one encounters terms that only have a `good sign' in one time direction.  It is for this reason that the blowup in Theorem~\ref{TCHL} was only obtained in one time direction.

In this section, we demonstrate how a modified version of time reversal symmetry may be applied to obtain blowup in both time directions:

\begin{theorem}\label{T5} Under the same assumptions of Theorem~\ref{TCHL}(ii), the solution $u$ blows up in both time directions, i.e. $T_\pm<\infty$. 
\end{theorem}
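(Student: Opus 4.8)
The plan is to exploit a modified time-reversal symmetry for \eqref{focusing}. The key observation is that \eqref{nls} and \eqref{focusing} are \emph{not} invariant under the usual transformation $u(t,x) \mapsto \overline{u(-t,x)}$, because the averaging integral $\int_0^1 e^{-i\sigma\Delta}(\cdots)\,d\sigma$ is not symmetric under $\sigma \mapsto -\sigma$; however, one can check that if $u$ solves \eqref{focusing} on $(-T_-,T_+)$, then
\[
v(t,x) := \overline{e^{-i\Delta}\,u(-t,x)}
\]
solves \eqref{focusing} on $(-T_+,T_-)$. Indeed, applying $e^{-i\Delta}$ shifts the averaging parameter: with $w(t) = e^{-i\Delta}u(t)$ one has $e^{i\sigma\Delta}w = e^{i(\sigma-1)\Delta}u$, so the nonlinearity becomes $\int_0^1 e^{-i(\sigma-1)\Delta}(|e^{i(\sigma-1)\Delta}u|^p e^{i(\sigma-1)\Delta}u)\,d\sigma = \int_{-1}^0 e^{-i\tau\Delta}(|e^{i\tau\Delta}u|^p e^{i\tau\Delta}u)\,d\tau$; then conjugation together with $t\mapsto -t$ flips the sign of the integration variable once more and sends $\int_{-1}^0 \mapsto \int_0^1$, recovering exactly the original equation. (One uses here that $e^{i\sigma\Delta}\overline{f} = \overline{e^{-i\sigma\Delta}f}$ and that $\Delta$ has real coefficients.) I would verify this identity carefully as the first step.

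Next I would check that this transformation preserves the hypotheses of Theorem~\ref{TCHL}(ii). The mass is clearly invariant: $M(v) = M(\overline{e^{-i\Delta}u_0}) = M(u_0)$ since $e^{-i\Delta}$ is unitary on $L^2$. For the gradient term, $\|\partial_x v\|_{L^2} = \|\partial_x e^{-i\Delta} u_0\|_{L^2} = \|\partial_x u_0\|_{L^2}$, again because $e^{-i\Delta}$ commutes with $\partial_x$ and is an $L^2$ isometry. Hence the data $v_0 = \overline{e^{-i\Delta}u_0}$ satisfies
\[
\|v_0\|_{L^2}^{\frac{p+8}{p-8}}\|\partial_x v_0\|_{L^2} > \|Q\|_{L^2}^{\frac{p+8}{p-8}}\|\partial_x Q\|_{L^2}.
\]
For the subthreshold condition \eqref{subthreshold} I need $M(v_0)^{\frac{p+8}{p-8}}E_{[0,1]}(v_0) < M(Q)^{\frac{p+8}{p-8}}E_\R(Q)$; since the mass matches, it suffices to show $E_{[0,1]}(v_0) = E_{[0,1]}(u_0)$, or at least $E_{[0,1]}(v_0) \le E_{[0,1]}(u_0)$. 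Here one uses the identity $E_{[0,1]}(e^{-i\Delta}\varphi) = \tfrac12\|\partial_x\varphi\|_{L^2}^2 - \int_{[-1,0]}\int |e^{i\tau\Delta}\varphi|^{p+2}\,dx\,d\tau = E_{[-1,0]}(\varphi)$, together with the fact that complex conjugation leaves both the kinetic and potential terms unchanged (the potential term involves $|e^{i\sigma\Delta}\cdot|^{p+2}$ and $|e^{i\sigma\Delta}\overline{\varphi}|=|e^{-i\sigma\Delta}\varphi|$). One then needs the observation that $\int_{[-1,0]}\int|e^{i\tau\Delta}\varphi|^{p+2} = \int_{[0,1]}\int|e^{i\tau\Delta}\varphi|^{p+2}$ by the substitution $\tau\mapsto -\tau$ and the symmetry $\|e^{-i\tau\Delta}\varphi\|_{L^{p+2}}=\|e^{i\tau\Delta}\overline{\varphi}\|_{L^{p+2}}$ — actually the cleanest route is to verify directly that $E_{[0,1]}(v_0)=E_{[0,1]}(u_0)$, so both halves of \eqref{subthreshold} transfer verbatim.

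Finally, apply Theorem~\ref{TCHL}(ii) to $v$: it blows up backward in time, i.e. the lower endpoint of $v$'s maximal lifespan $(-T_+, T_-)$ is finite, which means $T_+ < \infty$. Combined with the conclusion $T_- < \infty$ for $u$ itself from Theorem~\ref{TCHL}(ii) applied directly to $u$, we obtain $T_\pm < \infty$, proving Theorem~\ref{T5}. The main obstacle I anticipate is purely bookkeeping: tracking the sign changes of the $\sigma$-integration variable through the combination of conjugation, time reversal, and the application of $e^{-i\Delta}$, and confirming that the net effect really is to return the averaging window to $[0,1]$ rather than to $[-1,0]$ or $[1,2]$; once the symmetry $v = \overline{e^{-i\Delta}u(-t,\cdot)}$ is correctly pinned down, the invariance of the mass, kinetic energy, and the relevant potential-energy functional under it is routine, and the theorem follows immediately by invoking Theorem~\ref{TCHL}(ii) a second time.
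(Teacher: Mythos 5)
Your overall strategy is exactly the paper's: construct a modified time-reversal symmetry for \eqref{focusing}, verify it preserves the hypotheses of Theorem~\ref{TCHL}(ii), then apply that theorem to the transformed solution to get forward blowup. However, the transformation you write down has a sign error, and it is precisely in the ``bookkeeping'' step you flagged as the potential trouble spot.

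You propose $v(t,x) := \overline{e^{-i\Delta}u(-t,x)}$. Since $\overline{e^{-i\Delta}f} = e^{i\Delta}\bar f$, this is $v(t) = e^{i\Delta}\overline{u(-t)}$. The correct transformation (as in the paper) is $v(t) = e^{-i\Delta}\overline{u(-t)}$, equivalently $\overline{e^{i\Delta}u(-t)}$ --- conjugation should act \emph{outside} $e^{i\Delta}$, not outside $e^{-i\Delta}$. To see why your version fails: if $w = e^{-i\Delta}u$, then $e^{i\sigma\Delta}u = e^{i(\sigma+1)\Delta}w$, so
\[
(i\partial_t+\Delta)w = -e^{-i\Delta}\int_0^1 e^{-i\sigma\Delta}\bigl(|e^{i(\sigma+1)\Delta}w|^p e^{i(\sigma+1)\Delta}w\bigr)\,d\sigma
= -\int_1^2 e^{-i\tau\Delta}\bigl(|e^{i\tau\Delta}w|^p e^{i\tau\Delta}w\bigr)\,d\tau,
\]
i.e.\ the window moves to $[1,2]$, not to $[-1,0]$ as you claimed. (Your intermediate step shifted the \emph{outer} propagator $e^{-i\sigma\Delta}$ along with the inner one, which is not legitimate: substituting $e^{i\sigma\Delta}w = e^{i(\sigma-1)\Delta}u$ into $\int_0^1 e^{-i\sigma\Delta}(\cdots)$ leaves $e^{-i\sigma\Delta}$ in front.) Then $\overline{w(-t)}$ flips $[1,2]$ to $[-2,-1]$, so your $v$ solves a DMNLS with averaging window $[-2,-1]$, not \eqref{focusing}. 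The correct shift $w = e^{+i\Delta}u$ moves the window to $[-1,0]$, and then $\overline{w(-t)}$ maps $[-1,0]$ to $[0,1]$. This sign propagates into your energy identity as well: with your $v_0 = \overline{e^{-i\Delta}u_0}$, the potential term becomes $\int_{-2}^{-1}\int|e^{i\tau\Delta}u_0|^{p+2}$ rather than $\int_0^1\int|e^{i\tau\Delta}u_0|^{p+2}$. Once you replace $v$ by $e^{-i\Delta}\overline{u(-t)}$, the mass and kinetic invariance, the identity $E_{[0,1]}(v_0)=E_{[0,1]}(u_0)$, and the final appeal to Theorem~\ref{TCHL}(ii) all go through as you describe, and you recover the paper's proof.
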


\begin{proof}[Proof of Theorem~\ref{T5}] We first claim that the following modified time reversal symmetry for \eqref{focusing} holds: If $u$ is a solution to \eqref{focusing}, then so is $v(t):=e^{-i\Delta}\overline{u}(-t)$. Indeed, suppose $u$ solves \eqref{focusing}.  Then
\begin{align*}
(i\partial_t + \Delta)v(t) & = e^{-i\Delta}\overline{[(i\partial_t+\Delta)u]}(-t) \\
& = -\int_0^1 e^{i(\sigma-1)\Delta}[|e^{-i\sigma\Delta}\bar u(-t)|^p e^{-i\sigma\Delta}\bar u(-t)]\,d\sigma \\
& = -\int_0^1 e^{-i(1-\sigma)\Delta}[e^{i(1-\sigma)\Delta}v(t)|^p e^{i(1-\sigma)\Delta}v(t)]\,d\sigma \\
& = -\int_0^1 e^{-i\sigma\Delta}[e^{i\sigma\Delta}v(t)|^p e^{i\sigma\Delta}v(t)]\,d\sigma
\end{align*}
by a change of variables in $\sigma$. 

We next observe that the conditions in \eqref{subthreshold} and (ii) are invariant under the modified time reversal symmetry.  Indeed, condition (ii) is stated at $t=0$ and involves only the $L^2$- and $\dot H^1$-norms of the data, which are not affected by an application of the free propagator. Thus it suffices remains to verify that with $u,v$ related as above, we have
\[
E_{[0,1]}(v(t)) = E_{[0,1]}(u(-t)). 
\]
For this, it suffices to check the nonlinear part of the energy.  The computation is similar to the one above:
\begin{align*}
\int_0^1\int_\R |e^{i\sigma\Delta}v(t)|^{p+2}\,dx\,d\sigma & = \int_0^1 \int_\R |e^{i(\sigma-1)\Delta}\bar u(-t)|^{p+2}\,dx\,d\sigma \\
& = \int_0^1 \int_\R |e^{i(1-\sigma)\Delta}u(-t)|^{p+2}\,dx\,d\sigma \\
& = \int_0^1 \int_\R |e^{i\sigma\Delta}u(-t)|^{p+2}\,dx\,d\sigma.
\end{align*}

Now suppose $u$ satisfies \eqref{subthreshold} and (ii).  Then by the result of \cite{ChoiHongLee}, $u$ blows up backward in time.  By the argument above, the time reversal $v(t)=e^{-i\Delta}\overline{u}(-t)$ also satisfies \eqref{subthreshold} and (ii).  Thus $v$ also blows up backward in time, so that $u$ blows up forward in time. \end{proof}

 \subsection{Organization of the paper}
 
 The rest of this paper is organized as follows: In Section~\ref{S:preliminaries}, we gather some preliminary material needed throughout the rest of the paper.  In particular, we prove the shifted Strichartz estimate, Proposition~\ref{P:shifted}. In Section~\ref{S:T1}, we prove Theorem~\ref{T1} (small-data scattering in the mass-supercritical case).  In Section~\ref{S:T2}, we prove Theorem~\ref{T2} (small-data scattering in the mass-subcritical case).  In Section~\ref{S:T3}, we prove Theorem~\ref{T3} (large data scattering for weighted data).  The key ingredient is the pseudoconformal energy identity (Lemma~\ref{PCE}).  Finally, in Section~\ref{S:T4}, we prove Theorem~\ref{T4} (failure of scattering for sufficiently low powers). 
 
 \subsection*{Acknowledgements} J. K. was supported by JST SPRING, Grant Number JPMJSP2110. J. M. was supported by NSF grant DMS-2350225.

%%%%%%%%%%%
\section{Preliminaries}\label{S:preliminaries}  We use the standard $\lesssim$ notation, along with standard notation for Lebesgue and mixed Lebesgue spaces. For example, we will deal with norms of the form
\[
\| F\|_{L_\theta^a L_t^b L_x^c(I\times J\times\R^d)} := \bigl\| \ \bigl\| \, \|F(\theta,t,x)\|_{L_x^c(\R^d)} \bigr\|_{L_t^b(J)} \bigr\|_{L_\theta^a(I)}. 
\]
We also use $'$ to denote H\"older dual exponents. We use $\F$ to denote the Fourier transform, i.e. $\F \varphi(\xi)=(2\pi)^{-\frac{d}{2}}\int_{\R^d} e^{-ix\cdot\xi}\varphi(x)\,dx$. Fractional derivatives are defined via the Fourier transform via $|\nabla|^s = \F^{-1} |\xi|^s \F$.  Finally, we use the Japanese bracket notation $\langle x\rangle = \sqrt{1+|x|^2}$. 

Throughout the paper, we use $\Sigma$ to denote the space $\Sigma=\{f: f\in H^1,\ xf\in L^2\}$. 

We will make use of the following fractional chain rule: 
\begin{proposition}[Fractional chain rule, \cite{ChristWeinstein}] Let $G\in C^1(\C)$ and $s\in(0,1]$.  Suppose $1<r,r_2<\infty$ and $1<r_1\leq\infty$ satisfy $\tfrac{1}{r}=\tfrac{1}{r_1}+\tfrac{1}{r_2}$.  Then
\[
\| |\nabla|^s G(u)\|_{L_x^r} \lesssim \|G'(u)\|_{L_x^{r_1}} \| |\nabla|^s u\|_{L_x^{r_2}}.
\]
\end{proposition}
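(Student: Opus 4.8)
The plan is to reduce the inequality to a pointwise estimate via a square-function characterization of $|\nabla|^s$. The case $s=1$ is elementary: the pointwise chain rule gives $|\nabla(G(u))|\lesssim |G'(u)|\,|\nabla u|$, so H\"older's inequality (with $\tfrac1r=\tfrac1{r_1}+\tfrac1{r_2}$), together with the $L^q$-boundedness ($1<q<\infty$) of the Riesz transforms $\partial_j|\nabla|^{-1}$ (equivalently, $\||\nabla|f\|_{L^q}\sim\|\nabla f\|_{L^q}$), yields the claim. So assume $0<s<1$. Recall the classical square-function characterization: for $0<s<1$ and $1<q<\infty$,
\[
\||\nabla|^s f\|_{L^q_x}\sim \|\D_s f\|_{L^q_x},\qquad \D_s f(x):=\Bigl(\int_{\R^d}\frac{|f(x+y)-f(x)|^2}{|y|^{d+2s}}\,dy\Bigr)^{1/2}
\]
(see \cite{ChristWeinstein} and the references therein). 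Applying this at $q=r$ to $G(u)$ and at $q=r_2$ to $u$, it suffices to prove
\[
\|\D_s(G(u))\|_{L^r_x}\lesssim \|G'(u)\|_{L^{r_1}_x}\,\|\D_s u\|_{L^{r_2}_x}.
\]

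By the mean value theorem,
\[
|G(u(x+y))-G(u(x))|\le |u(x+y)-u(x)|\int_0^1\bigl|(\nabla G)\bigl((1-\theta)u(x)+\theta u(x+y)\bigr)\bigr|\,d\theta .
\]
When $r_1=\infty$ the $\theta$-integral is bounded by $\|G'(u)\|_{L^\infty_x}$, whence $\D_s(G(u))(x)\le \|G'(u)\|_{L^\infty_x}\,\D_s u(x)$ pointwise and we are done by taking $L^{r_2}$ norms. The substantive case is $r_1<\infty$. There I would, for each fixed $x$, split the defining $y$-integral of $\D_s(G(u))(x)^2$ at a radius $\rho(x)$ to be optimized. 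On the outer region $|y|>\rho(x)$ the weight $|y|^{-d-2s}$ is integrable, which permits trading the difference $G(u(x+y))-G(u(x))$, via the mean value representation and Cauchy--Schwarz, for a contribution of the form $\rho(x)^{-s}$ times a Hardy--Littlewood maximal average of $|G'(u)|$ near $x$. On the inner region $|y|\le\rho(x)$ one keeps the mean value representation and writes $(\nabla G)\bigl((1-\theta)u(x)+\theta u(x+y)\bigr)=(\nabla G)(u(x))+R(x,y,\theta)$: the constant term reproduces exactly $|G'(u(x))|\,\D_s u(x)$ after reintegration, while the remainder $R$ is absorbed, after choosing $\rho(x)$ so that the oscillation of $u$ on $B(x,\rho(x))$ is small relative to its $s$-modulus at $x$, into maximal averages of $\D_s u$. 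Optimizing $\rho(x)$ and collecting terms gives a pointwise bound of the form $\D_s(G(u))(x)\lesssim \mathcal{M}_1(|G'(u)|)(x)\,\mathcal{M}_2(\D_s u)(x)$ for suitable maximal operators $\mathcal{M}_1,\mathcal{M}_2$; one then concludes by the (vector-valued) Fefferman--Stein maximal inequality and H\"older's inequality with $\tfrac1r=\tfrac1{r_1}+\tfrac1{r_2}$.

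The main obstacle is controlling the remainder $R(x,y,\theta)$ on the inner region. Since $G$ is merely $C^1$, $\nabla G$ carries no quantitative modulus of continuity, and the convex combination $(1-\theta)u(x)+\theta u(x+y)$ lives in value space rather than physical space, so it is not directly seen by physical-space maximal functions. Making this step rigorous requires an exhaustion/limiting argument (approximating $G$ by maps with controlled $\nabla G$ and decomposing the range of $u$ into level sets), and it is here that one follows the original argument of Christ--Weinstein closely. Everything else (the square-function characterization, the $s=1$ and $r_1=\infty$ cases, and the final Fefferman--Stein and H\"older bookkeeping) is routine.
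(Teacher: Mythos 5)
The paper does not prove this proposition; it quotes it from \cite{ChristWeinstein} and applies it only with $G(z)=|z|^pz$. Your outline does mirror the Christ--Weinstein strategy: the square-function equivalence $\||\nabla|^s f\|_{L^q}\sim\|\D_s f\|_{L^q}$ for $0<s<1$ and $1<q<\infty$, a pointwise bound of $\D_s(G(u))$ by products of maximal functions, and then Hardy--Littlewood plus H\"older. The treatments of $s=1$ and of $r_1=\infty$ are correct and essentially complete.

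The step you yourself flag as the ``main obstacle'' is a genuine gap, and the remedy you sketch would not close it. Writing $\nabla G\bigl((1-\theta)u(x)+\theta u(x+y)\bigr)=\nabla G(u(x))+R$ and trying to absorb $R$ on the inner region cannot work for a merely $C^1$ map $G$: there is no quantitative modulus of continuity on $\nabla G$, so no choice of $\rho(x)$ makes $R$ uniformly small, and an ``exhaustion/limiting argument'' will not produce estimates uniform in the approximating family precisely because the loss depends on that uncontrolled modulus of continuity. The Christ--Weinstein argument does not Taylor-expand $\nabla G$ at all; its key pointwise lemma bounds the mean-value integrand $\int_0^1\bigl|G'\bigl((1-\theta)u(x)+\theta u(x+y)\bigr)\bigr|\,d\theta$ directly by a power of a Hardy--Littlewood maximal average of $|G'(u)|$ on balls about $x$, and establishing that bound is the substantive content of the proof. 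Your proposal defers exactly this step to the reference, so as written it supplies the surrounding framework but not the heart of the argument. A minor additional remark: the final step needs only the scalar Hardy--Littlewood maximal inequality on $L^{r_1}$ and $L^{r_2}$ together with H\"older; no vector-valued sum appears, so the Fefferman--Stein inequality is not what is actually being used.
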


It will be convenient at times to utilize the decomposition
\begin{equation}\label{dollard}
e^{it\Delta} = M(t)D(t)\F M(t),
\end{equation}
where
\[
[M(t)f](x) = e^{\frac{i|x|^2}{4t}}f(x),\quad [D(t)f](x) = (2it)^{-\frac{d}{2}}f(\tfrac{x}{2t}),
\]
and $\F$ is the Fourier transform. 

We recall the definition of the Lorentz space $L_t^{q,\alpha}(\R)$ (see e.g. \cite{Hunt1, Hunt2}): For $1\leq q<\infty$ and $1\leq\alpha\leq\infty$, the space $L_t^{q,\alpha}$ is defined via the quasi-norm
\[
\|f\|_{L_t^{q,\alpha}} := \bigl\| \lambda\,|\{ t\in\R:|f(t)|>\lambda\}|^{\frac{1}{q}}\bigr\|_{L^\alpha((0,\infty),\frac{d\lambda}{\lambda})},
\] 
where $|\cdot|$ denotes Lebesgue measure. Many standard inequalities extend naturally to Lorentz spaces.  For example, we have the H\"older inequality
\[
\|fg\|_{L_t^{q,\alpha}} \lesssim \|f\|_{L_t^{q_1,\alpha_1}}\|g\|_{L_t^{q_2,\alpha_2}}
\]
for $1\leq q,q_1,q_2<\infty$ and $1\leq\alpha,\alpha_1,\alpha_2\leq\infty$ satisfying
\[
\tfrac{1}{q}=\tfrac{1}{q_1}+\tfrac{1}{q_2}\qtq{and}\tfrac{1}{\alpha}=\tfrac{1}{\alpha_1}+\tfrac{1}{\alpha_2}
\]
(see e.g. \cite{Hunt1}).

We also have the following convolution inequality (an extension of Hardy--Littlewood--Sobolev or, more generally, Young's convolution inequality):
\[
\|f\ast g\|_{L^{q,2}}\lesssim \|f\|_{L^{q_1,\infty}}\|g\|_{L^{q_2,2}}\]
for
\[
1<q,q_1,q_2<\infty\qtq{satisfying}\tfrac{1}{q_1}+\tfrac{1}{q_2}=1+\tfrac{1}{q}. 
\]
This follows immediately from the standard Young convolution inequality together with an extension of the Marcinkiewicz interpolation theorem to Lorentz spaces due to Hunt \cite{Hunt2}.

We will make use of Strichartz estimates for the linear Schr\"odinger equation (cf. \cite{GinibreVelo, KeelTao, Strichartz}), including an extension making use of Lorentz spaces in time (see \cite{NakanishiOzawa}).

In what follows, we call $(q,r)$ a (Schr\"odinger) admissible pair if $2\leq q,r\leq\infty$, $\tfrac{2}{q}+\tfrac{d}{r}=\tfrac{d}{r}$, and $(d,q,r)\neq (2,2,\infty)$. We call the pair $(2,\tfrac{2d}{d-2})$ the endpoint admissible pair in dimensions $d\geq 3$. 

\begin{proposition}[Standard Strichartz estimate]
Let $I\ni 0$ be a time interval.  For any admissible pair, 
\[
\|e^{it\Delta}\varphi\|_{L_t^\infty L_x^2\cap L_t^{q,2} L_x^r(I\times\R^d)}\lesssim \|\varphi\|_{L_x^2}.
\]
Moreover, for any admissible pairs $(q_1,r_1)$ and $(q_2,r_2)$,
\[
\biggl\|\int_{0}^t e^{i(t-s)\Delta}F(s)\,ds\biggr\|_{L_t^\infty L_x^2\cap L_t^{q_1,2} L_x^{r_1}(I\times\R^d)} \lesssim \|F\|_{L_t^{q_2',2}L_x^{r_2',2}}.
\]
The same estimates hold with the Lorentz spaces $L_t^{q,2}$ replaced by the standard Lebesgue spaces $L_t^q$. 
\end{proposition}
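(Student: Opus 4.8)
The plan is to reduce everything to two elementary facts about the free propagator, namely mass conservation $\|e^{it\Delta}\varphi\|_{L_x^2}=\|\varphi\|_{L_x^2}$ and the dispersive estimate $\|e^{it\Delta}\varphi\|_{L_x^\infty}\lesssim|t|^{-d/2}\|\varphi\|_{L_x^1}$, and then to run a $TT^*$ argument in which the time integration is carried out using the Lorentz convolution inequality recorded above. Real interpolation between the two facts (in the spatial variable, keeping second Lorentz index $2$) gives, for any admissible $(q,r)$,
\[
\bigl\|e^{i(t-s)\Delta}G\bigr\|_{L_x^{r,2}}\lesssim|t-s|^{-d(\frac12-\frac1r)}\|G\|_{L_x^{r',2}}=|t-s|^{-\frac2q}\|G\|_{L_x^{r',2}},
\]
the last equality being the admissibility relation; the same bound of course holds with the standard spaces $L_x^r$, $L_x^{r'}$ in place of the Lorentz spaces.

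\textbf{Non-endpoint pairs.} Writing $Tf=e^{it\Delta}f$ (restricted to $I$), the homogeneous estimate is equivalent, via the standard duality reduction, to the boundedness of $TT^*F=\int_{\R}e^{i(t-s)\Delta}F(s)\,ds$ from the appropriate dual space into $L_t^{q,2}L_x^r$. Combining the displayed decay bound with Minkowski's inequality gives the pointwise-in-time estimate $\|TT^*F(t)\|_{L_x^r}\lesssim\bigl(|\cdot|^{-2/q}\ast\|F(\cdot)\|_{L_x^{r'}}\bigr)(t)$, and since $|t|^{-2/q}\in L_t^{q/2,\infty}$ with the exponents satisfying $\tfrac{2}{q}+\tfrac1{q'}=1+\tfrac1q$, the Lorentz convolution inequality stated above yields $\|TT^*F\|_{L_t^{q,2}L_x^r}\lesssim\|F\|_{L_t^{q',2}L_x^{r'}}$; running the same computation on the bilinear form $\langle TT^*F,G\rangle$ while keeping the spatial second Lorentz index equal to $2$ produces the refinement $\|F\|_{L_t^{q_2',2}L_x^{r_2',2}}$ on the right-hand side of the inhomogeneous estimate. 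This simultaneously gives the homogeneous bound (in its Lorentz-in-time form) and the inhomogeneous bound for the \emph{non-retarded} Duhamel operator $\int_{\R}$; the retarded operator $\int_0^t$ is then recovered by the Christ--Kiselev lemma, which applies since $q_1>q_2'$ away from the endpoint. The $L_t^\infty L_x^2$ components of both estimates are immediate from mass conservation together with the dual ($T^*$) form of the homogeneous estimate.

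\textbf{The endpoint $(2,\tfrac{2d}{d-2})$, $d\geq3$.} Here the convolution kernel has exponent $\tfrac2q=1$, which falls outside the range of the Lorentz convolution inequality above (and the corresponding endpoint of Young/Hardy--Littlewood--Sobolev genuinely fails), so the simple argument breaks down. Instead I would invoke the Keel--Tao bilinear argument --- dyadically decomposing in the time separation $|t-s|$ and interpolating --- which produces the endpoint estimate together with the stated Lorentz refinements, and which moreover handles the retarded Duhamel operator directly (Christ--Kiselev being unavailable since $q_1=q_2'=2$ there); see \cite{KeelTao, NakanishiOzawa}. Finally, the versions with Lebesgue spaces in place of the Lorentz spaces follow from the inclusions $L^{q,2}\hookrightarrow L^{q}$ for $q\geq2$ and $L^{p}\hookrightarrow L^{p,2}$ for $p\leq2$. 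The only genuinely delicate ingredient is the endpoint case, which I would cite rather than reproduce; every other step is a routine assembly of the dispersive estimate, real interpolation, the Lorentz convolution inequality, and the Christ--Kiselev lemma.
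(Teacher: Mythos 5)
The paper does not actually prove this proposition; it simply records the statement and cites Ginibre--Velo, Keel--Tao, Strichartz, and Nakanishi--Ozawa, so there is no paper proof to compare against. Your blueprint is, however, exactly the one the authors deploy when they \emph{do} prove the adjacent shifted Strichartz estimate (Proposition~\ref{P:shifted}): dispersive estimate, H\"older, and the Lorentz-space Hardy--Littlewood--Sobolev inequality in time, all run on the bilinear form by duality, with the endpoint deferred to Keel--Tao and the Lorentz refinement to Nakanishi--Ozawa, just as you cite.

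One step is stated too loosely. The bilinear-form computation on $\langle TT^*F,G\rangle$ closes only in the diagonal case $(q_1,r_1)=(q_2,r_2)$: the interpolated dispersive bound $\|e^{i(t-s)\Delta}\|_{L_x^{a'}\to L_x^{a}}\lesssim |t-s|^{-d(\frac12-\frac1a)}$ ties the two spatial exponents together, and the HLS scaling in time then forces $q_1=q_2$ (hence, by admissibility, $r_1=r_2$). ``Running the same computation on the bilinear form'' therefore does not by itself yield the off-diagonal inhomogeneous estimate. The standard route, which you likely intend but do not quite say, is composition: once you have the homogeneous estimate $T:L_x^2\to L_t^{q_1,2}L_x^{r_1}$ and its Lorentz-refined dual $T^*:L_t^{q_2',2}L_x^{r_2',2}\to L_x^2$, the operator $TT^*=\int_\R e^{i(t-s)\Delta}(\cdot)\,ds$ factors through $L_x^2$ and inherits the stated off-diagonal mapping property automatically; Christ--Kiselev then passes from $\int_\R$ to $\int_0^t$ whenever $q_1>q_2'$, which away from the double endpoint is always the case, exactly as you note. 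With that clarification the argument is complete, and its non-endpoint core is the same duality/dispersive/Lorentz-HLS scheme the paper itself carries out for Proposition~\ref{P:shifted}.
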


We also need a slight modification of the standard Strichartz estimate, which we refer to throughout the paper as a `shifted Strichartz estimate' due to the presence of the additional time shift by the linear propgator.  This estimate is inspired by the methods in \cite{Kawakami}.

\begin{proposition}[Shifted Strichartz estimate]\label{P:shifted} Let $(q,r)$ be an admissible pair with $q\neq 2$. Then for any time interval $I\ni 0$ we have
\[
\biggl\| \int_0^t e^{i(t-s+\theta-\sigma)\Delta}F(\sigma,s)\,ds\biggr\|_{L_t^{q,2} L_x^r(I\times\R^d)} \lesssim \|F(\sigma,t)\|_{L_t^{q',2}L_x^{r'}(I\times\R^d)} 
\]
uniformly in $\theta,\sigma\in\R$.  The same result when the Lorentz space $L_t^{q,2}$ is replaced by the usual Lebesgue space $L_t^q$. 
\end{proposition}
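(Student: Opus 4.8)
The plan is to reduce the shifted estimate to the standard inhomogeneous Strichartz estimate by absorbing the extra shift $\theta-\sigma$ into the unknowns. Writing $G(\theta,s):=e^{i(\theta-\sigma)\Delta}F(\sigma,s)$ and $H(t):=\int_0^t e^{i(t-s)\Delta}G(\theta,s)\,ds$, the left-hand side is $\|e^{i\theta\Delta}H\|_{L_t^{q,2}L_x^r}$ — wait, that is not quite right, since the shift by $\theta$ sits outside the $s$-integral but the shift by $\sigma$ sits inside. So more carefully: I would write the quantity to be estimated as $e^{i\theta\Delta}\bigl[\int_0^t e^{i(t-s)\Delta}\tilde F(s)\,ds\bigr]$ where $\tilde F(s):=e^{-i\sigma\Delta}F(\sigma,s)$. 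Then the first step is to note that $e^{i\theta\Delta}$ is unitary on $L_x^2$ but \emph{not} on $L_x^r$ for $r\neq 2$, so one cannot simply discard it; this is precisely where the genuine content lies and where I expect the main obstacle.

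To handle the leftover $e^{i\theta\Delta}$, the idea (following \cite{Kawakami}) is to interpolate. First I would establish two endpoint-type bounds. For the "$L^2$ side": the operator $F\mapsto \int_0^t e^{i(t-s+\theta-\sigma)\Delta}F(\sigma,s)\,ds$ maps $L_t^{q',2}L_x^{r'}\to L_t^\infty L_x^2$ with a bound uniform in $\theta,\sigma$, because after pulling $e^{i\theta\Delta}$ through the $L_t^\infty L_x^2$ norm it disappears (unitarity) and we are left with the standard inhomogeneous estimate applied to $\tilde F$, whose $L_t^{q',2}L_x^{r'}$ norm equals that of $F$ (again unitarity on $L_x^{r'}$... no — unitarity fails on $L_x^{r'}$ too). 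Let me restructure: the cleanest route is to keep $e^{i\theta\Delta}$ attached and use the dispersive-estimate-based proof of Strichartz directly. Thus I would instead go back to the $TT^*$/Hardy–Littlewood–Sobolev argument: the kernel of the full operator, from $F$ to the output evaluated in $x$, is controlled by $|t-s+\theta-\sigma|^{-d(\frac12-\frac1r)}$ via the dispersive estimate $\|e^{i\tau\Delta}\|_{L^{r'}\to L^r}\lesssim|\tau|^{-d(\frac12-\frac1r)}$. Hence, pointwise in $\theta,\sigma$,
\[
\biggl\| \int_0^t e^{i(t-s+\theta-\sigma)\Delta}F(\sigma,s)\,ds\biggr\|_{L_x^r}
\lesssim \int_I |t-s+\theta-\sigma|^{-d(\frac12-\frac1r)}\|F(\sigma,s)\|_{L_x^{r'}}\,ds,
\]
and the $L_t^{q',2}\to L_t^{q,2}$ boundedness of convolution against $|\cdot|^{-d(\frac12-\frac1r)}$ translated by the fixed constant $\theta-\sigma$ follows from the Lorentz-space Young/Hardy–Littlewood–Sobolev inequality recalled above, since $d(\frac12-\frac1r)=\frac2q\in(0,1)$ (using $q\neq 2$) and translation by a constant is an isometry on $L_t^{q',2}(\R)$. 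The uniformity in $\theta,\sigma$ is then manifest: the constant enters only as a harmless translation.

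So the steps, in order, are: (1) invoke the dispersive estimate to bound the $L_x^r$ norm of the shifted Duhamel integral by the time-convolution displayed above; (2) observe $d(\tfrac12-\tfrac1r)=\tfrac2q\in(0,1)$ for admissible $(q,r)$ with $q\neq 2$, so the kernel $|\tau|^{-2/q}$ lies in weak-$L^{q/2}$, i.e. in $L_t^{q/2,\infty}$; (3) apply the Lorentz-space convolution inequality $\|f*g\|_{L^{q,2}}\lesssim\|f\|_{L^{q/2,\infty}}\|g\|_{L^{q',2}}$ (valid since $\tfrac1{q/2}+\tfrac1{q'}=1+\tfrac1q$) with $g=\|F(\sigma,\cdot)\|_{L_x^{r'}}\mathbf 1_I$, using that translation by $\theta-\sigma$ preserves the $L^{q/2,\infty}$ quasi-norm of the kernel; (4) restrict the convolution to $I$ and note the implicit constants do not depend on $\theta,\sigma$ or on $I$. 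The Lebesgue-space version follows identically, replacing the Lorentz convolution inequality by the classical Hardy–Littlewood–Sobolev inequality. The main obstacle, as noted, is resisting the temptation to discard $e^{i\theta\Delta}$ by unitarity (which only holds in $L^2$); the resolution is simply that the $TT^*$-free, dispersive-estimate proof of the inhomogeneous Strichartz estimate never needs to peel off the propagator and tolerates an arbitrary fixed time translation in its kernel.
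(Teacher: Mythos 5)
Your final approach is correct and is essentially the paper's: the dispersive estimate reduces the bound to a one-dimensional time convolution with kernel $|\tau|^{-2/q}$, and the Lorentz-space Young/Hardy--Littlewood--Sobolev inequality together with translation invariance of Lorentz norms yields a constant uniform in $\theta,\sigma$. The only presentational difference is that the paper runs the argument through a duality pairing (testing against $G\in L_t^{q',2}L_x^{r'}$, then applying H\"older in $x$ and in Lorentz time) before invoking the convolution inequality, whereas you apply Minkowski and the convolution bound directly; these are equivalent formulations of the same proof.
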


\begin{proof} The Lorentz version is strictly stronger, so this is what we prove.  We mimic proof of non-endpoint Strichartz estimates, inserting the Lorentz-space version of Hardy--Littlewood--Sobolev when appropriate.  We estimate over $\R\times\R^d$; the restriction to $I\times\R^d$ can be achieved by replacing $F(\sigma,s)$ with $F(\sigma,s)\chi_{I}(s)$. 

For the sake of completeness, we provide the details here.  We estimate by duality, choosing $G\in L_t^{q',2}L_x^{r'}$ of norm one and using the dispersive estimate for $e^{it\Delta}$, H\"older's inequality in Lorentz space, translation invariance of Lorentz space, and Hardy--Littlewood--Sobolev in Lorentz space to estimate (for any $\theta,\sigma$):
\begin{align*}
\int_\R\int_0^t&\int |e^{i(t-s+\theta-\sigma)\Delta}F(\sigma,s)|\,|G(t,x)|\,dx\,ds\,dt \\
& \lesssim \int_\R \int_\R |t-s+\theta-\sigma|^{-\frac{d}{2}+\frac{d}{r}}\|F(\sigma,s)\|_{L_x^{r'}}\,\|G(t)\|_{L_x^{r'}}\,ds\,dt \\
& \lesssim \biggl\| \int_\R |t-s+\theta-\sigma|^{-\frac{2}{q}}\|F(\sigma,s)\|_{L_x^{r'}}\,ds \biggr\|_{L_t^{q,2}} \|G\|_{L_t^{q',2}L_x^{r'}}  \\
& \lesssim \bigl\| |t|^{-\frac{2}{q}}\ast \|F(\sigma,t)\|_{L_x^{r'}}\bigr\|_{L_t^{q,2}} \\
& \lesssim \||t|^{-\frac{2}{q}}\|_{L_t^{\frac{q}{2},\infty}}\|F(\sigma,t)\|_{L_t^{q',2} L_x^{r'}}. 
\end{align*}

\end{proof}

%%%%%%%%%%%%%%%%
\section{Small-data scattering in the mass-supercritical case}\label{S:T1}

The goal of this section is to prove Theorem~\ref{T1}, which we reproduce informally as Theorem~\ref{T1_informal} below.  We suppose $p\in[\tfrac{4}{d},\tfrac{4}{d-2}]$ (with the replacement $p\in[\tfrac{4}{d},\infty)$ in dimensions $d=1,2$).  We define the critical regularity 
\[
s_c=\tfrac{d}{2}-\tfrac{2}{p}\in[0,1].
\]

We first introduce the exponents to be used through this section. We define
\[
q=p+2,\quad r=\tfrac{2d(p+2)}{2(d-2)+dp},\quad r_c=\tfrac{dp(p+2)}{4}.
\]
For the range of $p$ we consider, we have that $(q,r)$ is a (non-endpoint) admissible Strichartz pair.  The exponent $r_c$ belongs to $[r,\infty)$ and satisfies the scaling relation
\[
\tfrac{d}{r_c} = \tfrac{d}{r}-s_c. 
\]
We also take note of the scaling relations
\[
\tfrac{1}{q'}=\tfrac{p+1}{q},\quad \tfrac{1}{r'}=\tfrac{p}{r_c}+\tfrac{1}{r}.
\]

With the exponents fixed as above, we now prove the following.

\begin{theorem}\label{T1_informal} Let $\varphi\in H^{s_c}$.  If $\| |\nabla|^{s_c}\varphi\|_{L^2}$ is sufficiently small, we obtain a global solution to \eqref{nls} that scatters.
\end{theorem}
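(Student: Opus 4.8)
The plan is to solve \eqref{nls} globally by a contraction-mapping argument for the Duhamel map
\[
\Phi(u)(t) = e^{it\Delta}\varphi - i\int_0^t e^{i(t-s)\Delta} N(u(s))\,ds, \qquad N(u) := \int_0^1 e^{-i\sigma\Delta}\bigl[|e^{i\sigma\Delta}u|^p e^{i\sigma\Delta}u\bigr]\,d\sigma,
\]
working directly on $\R\times\R^d$ in function spaces that build in the averaging over $e^{i\sigma\Delta}$. Setting $\eta := \||\nabla|^{s_c}\varphi\|_{L^2}$, I would introduce the critical and subcritical quantities
\[
\dot X(u) := \sup_{\sigma\in\R}\bigl\| |\nabla|^{s_c} e^{i\sigma\Delta}u \bigr\|_{L_t^q L_x^r(\R\times\R^d)},\qquad X_0(u) := \|u\|_{L_t^\infty L_x^2} + \sup_{\sigma\in\R}\bigl\| e^{i\sigma\Delta}u \bigr\|_{L_t^q L_x^r(\R\times\R^d)},
\]
and run the fixed point on the set $B=\{u : \dot X(u)\le 2C\eta,\ X_0(u)\le 2C\|\varphi\|_{L^2}\}$, metrized by $X_0(u-v)$ (in which $B$ is complete, by weak lower semicontinuity of $\dot X$ and reflexivity of $L_t^q L_x^r$). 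Throughout the stated range of $p$ the pair $(q,r)$ is admissible with $q\neq 2$ (so the Strichartz endpoint is never needed), which makes both the standard Strichartz estimate and the shifted Strichartz estimate of Proposition~\ref{P:shifted} available; combined with the unitarity of $e^{i\sigma\Delta}$ on $L^2$ and on $\dot H^{s_c}$ and with translation invariance in $t$, these control the linear term: $\dot X(e^{it\Delta}\varphi)\lesssim\eta$ and $X_0(e^{it\Delta}\varphi)\lesssim\|\varphi\|_{L^2}$.

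The heart of the argument is the nonlinear estimate, and here the structure of $N$ meshes exactly with Proposition~\ref{P:shifted}. Applying $e^{i\theta\Delta}$ to the Duhamel term and commuting it past the $\sigma$-average gives
\[
e^{i\theta\Delta}\bigl[\Phi(u)-e^{it\Delta}\varphi\bigr](t) = -i\int_0^1\int_0^t e^{i(t-s+\theta-\sigma)\Delta}F(\sigma,s)\,ds\,d\sigma,\qquad F(\sigma,s) := |e^{i\sigma\Delta}u(s)|^p e^{i\sigma\Delta}u(s),
\]
so Minkowski's inequality together with Proposition~\ref{P:shifted}, applied for each fixed $\sigma$ and uniformly in $\theta$ and in the shift $\sigma$, bounds (using also that $|\nabla|^{s_c}$ commutes with $e^{i\theta\Delta}$)
\[
\dot X\bigl(\Phi(u)-e^{it\Delta}\varphi\bigr) \lesssim \int_0^1 \bigl\| |\nabla|^{s_c} F(\sigma,\cdot)\bigr\|_{L_t^{q'}L_x^{r'}}\,d\sigma.
\]
For each $\sigma$ one then estimates $\||\nabla|^{s_c}F(\sigma,\cdot)\|_{L_t^{q'}L_x^{r'}}$ by the fractional chain rule with $G(z)=|z|^p z$ (a $C^1$ map since $p>0$; when $s_c=0$ the derivatives are simply absent), the Hölder relations $\tfrac1{r'}=\tfrac p{r_c}+\tfrac1r$ and $\tfrac1{q'}=\tfrac{p+1}q$, and the Sobolev embedding $\dot W^{s_c,r}\hookrightarrow L^{r_c}$ — which is exactly the scaling relation $\tfrac d{r_c}=\tfrac dr - s_c$ — arriving at $\lesssim \|e^{i\sigma\Delta}u\|_{L_t^q L_x^{r_c}}^p \||\nabla|^{s_c}e^{i\sigma\Delta}u\|_{L_t^q L_x^r}\lesssim\dot X(u)^{p+1}$, and integrating the harmless $\sigma$-dependence over $[0,1]$ preserves the bound. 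The non-derivative and $L_t^\infty L_x^2$ parts of $X_0(\Phi(u))$ are handled the same way, except that for $L^2$-valued output one uses ordinary dual Strichartz and absorbs the unitary $e^{-i\sigma\Delta}$; this yields $X_0(\Phi(u))\lesssim\|\varphi\|_{L^2}+\dot X(u)^p X_0(u)$, so that for $\eta$ small $\Phi$ maps $B$ into itself. The contraction is then run in the $X_0$-metric using the pointwise bound $\bigl| |z_1|^p z_1 - |z_2|^p z_2\bigr|\lesssim (|z_1|^p+|z_2|^p)|z_1-z_2|$, which sidesteps differentiating the merely $C^1$ nonlinearity when $p<1$ and gives $X_0(\Phi(u)-\Phi(v))\lesssim(\dot X(u)^p+\dot X(v)^p)\,X_0(u-v)\le\tfrac12\,X_0(u-v)$.

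The fixed point produces a unique global solution $u$ with $\dot X(u)+X_0(u)<\infty$; in particular $u\in L_t^\infty L_x^2$, and reinserting the nonlinear estimate into the Duhamel formula for $|\nabla|^{s_c}u$ (using $L^2$-unitarity of $e^{-i\sigma\Delta}$) gives $|\nabla|^{s_c}u\in L_t^\infty L_x^2$, hence $u\in L_t^\infty H_x^{s_c}$. Scattering follows from the Cauchy criterion: the increment $e^{-it'\Delta}u(t')-e^{-it\Delta}u(t)=-i\int_0^1 e^{-i\sigma\Delta}\int_t^{t'}e^{-is\Delta}F(\sigma,s)\,ds\,d\sigma$ has $H^{s_c}$-norm bounded — via $L^2$-unitarity of $e^{-i\sigma\Delta}$ and dual Strichartz on $[t,t']$ — by $\int_0^1\bigl(\|F(\sigma,\cdot)\|_{L_t^{q'}L_x^{r'}([t,t']\times\R^d)}+\||\nabla|^{s_c}F(\sigma,\cdot)\|_{L_t^{q'}L_x^{r'}([t,t']\times\R^d)}\bigr)\,d\sigma$, which tends to $0$ as $t,t'\to\pm\infty$ because the global space-time norms of $u$ are finite and $q,r<\infty$; the limits give the scattering states $u_\pm\in H^{s_c}$, whose uniqueness is immediate. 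The step I expect to require the most care — and the only genuinely new one — is the bookkeeping in the nonlinear estimate: one must arrange that every shift $e^{\pm i\sigma\Delta}$ produced by the DMNLS nonlinearity is either absorbed by $L^2$-unitarity or handled by the shifted Strichartz estimate, with every resulting bound uniform in $\sigma\in[0,1]$ (here trivially, since $[0,1]$ has finite measure). Everything else is the standard critical small-data theory.
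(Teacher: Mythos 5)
Your proposal is correct and follows essentially the same route as the paper: a contraction mapping in an averaged space built around $\sup_\sigma\|e^{i\sigma\Delta}(\cdot)\|_{L_t^q L_x^r}$ (with and without $|\nabla|^{s_c}$), using the shifted Strichartz estimate of Proposition~\ref{P:shifted}, the fractional chain rule, and Sobolev embedding at the scaling relation $\tfrac{d}{r_c}=\tfrac{d}{r}-s_c$, then closing the contraction in the undifferentiated metric to avoid differentiating the merely $C^1$ nonlinearity when $p<1$. The only differences are cosmetic --- you take the $\sigma$-supremum over $\R$ rather than $[0,1]$ and bundle $L_t^\infty L_x^2$ into the space from the start rather than deriving it afterward --- and you supply a few details (completeness of the ball, the $s_c=0$ degeneration) that the paper leaves implicit.
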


\begin{proof} Fix $\varphi\in H^{s_c}$. Define
\[
\Phi u(t) = e^{it\Delta}\varphi - i\int_0^t e^{i(t-s)\Delta}\biggl[\int_0^1 e^{-i\sigma\Delta}[|e^{i\sigma\Delta}u(s)|^p e^{i\sigma\Delta}u(s)]\,d\sigma\biggr]\,ds. 
\]
We will show that $\Phi$ is a contraction on a suitable complete Banach space.  In particular, we define
\[
X=\{u:\|e^{i\theta\Delta}u\|_{L_\theta^{\infty} L_t^q L_x^r} \leq 2C\|\varphi\|_{L^2},\quad \|e^{i\theta\Delta}|\nabla|^{s_c}u\|_{L_\theta^{\infty} L_t^q L_x^r} \leq 2C\||\nabla|^{s_c}\varphi\|_{L^2}\}
\]
with distance
\[
d(u,v) = \|e^{i\theta\Delta}[u-v]\|_{L_\theta^{\infty} L_t^q L_x^r}.
\]
All norms are taken over $(\theta,t,x)\in[0,1]\times\R\times\R^d$ unless otherwise indicated.  The constant $C$ in the definition of $X$ is meant to encode all of the implicit constants used in the estimates below.

We let $u\in X$ and first show $\Phi u\in X$.

We begin with the linear term in $\Phi u$.  Using (time) translation invariance and Strichartz estimates, we have
\begin{align*}
\|e^{i\theta\Delta}|\nabla|^{s_c} e^{it\Delta}\varphi\|_{L_\theta^{\infty} L_t^q L_x^r} & = \|e^{i(t+\theta)\Delta}|\nabla|^{s_c} \varphi\|_{L_\theta^{\infty} L_t^q L_x^r} \\
& = \|e^{it\Delta} |\nabla|^{s_c}\varphi\|_{L_\theta^{\infty} L_t^q L_x^r} \\
& \lesssim \|e^{it\Delta}|\nabla|^{s_c}\varphi\|_{L_t^q L_x^r} \lesssim \| |\nabla|^{s_c}\varphi\|_{L_x^2}.
\end{align*}
Using the shifted Strichartz estimate, the fractional chain rule, and Sobolev embedding, the nonlinear term is estimated as follows:
\begin{align*}
\biggl\|& e^{i\theta\Delta}|\nabla|^{s_c} \int_0^t e^{i(t-s)\Delta}\int_0^1 e^{-i\sigma\Delta}[|e^{i\sigma\Delta}u(s)|^p e^{i\sigma\Delta}u(s)]\,d\sigma\,ds\biggr\|_{L_\theta^{\infty} L_t^q L_x^r} \\
& = \biggl\|\int_0^1 \int_0^t e^{i(t-s+\theta-\sigma)} |\nabla|^{s_c}[|e^{i\sigma\Delta}u(s)|^pe^{i\sigma\Delta} u(s)]\,d\sigma\,ds\biggr\|_{L_\theta^{\infty} L_t^q L_x^r} \\
& \leq \int_0^1 \biggl\|\int_0^t e^{i(t-s+\theta-\sigma)\Delta} |\nabla|^{s_c}(|e^{i\sigma\Delta}u(s)|^p e^{i\sigma\Delta} u(s))\,ds\biggr\|_{L_\theta^{\infty} L_t^q L_x^r}\,d\sigma \\
%& \lesssim \int_0^1 \||\nabla|^{s_c}(|e^{i\sigma\Delta}u(t)|^p e^{i\sigma\Delta}u(t))\|_{L_\theta^{\infty} L_t^{q'} L_x^{r'}}\,d\sigma \\
& \lesssim \int_0^1 \| |\nabla|^{s_c}(|e^{i\sigma\Delta}u(t)|^p e^{i\sigma\Delta}u(t))\|_{L_t^{q'} L_x^{r'}}\,d\sigma \\
& \lesssim \int_0^1 \|e^{i\sigma\Delta}u\|_{L_t^q L_x^{r_c}}^p \|e^{i\sigma\Delta}|\nabla|^{s_c} u\|_{L_t^q L_x^r}\,d\sigma\\
& \lesssim \int_0^1 \|e^{i\sigma\Delta} |\nabla|^{s_c}u\|_{L_t^q L_x^r}^{p+1}\,d\sigma \\
& \lesssim \|e^{i\sigma\Delta} |\nabla|^{s_c} u\|_{L_\sigma^{\infty} L_t^q L_x^r}^{p+1}. 
\end{align*}
Thus
\begin{align*}
\|e^{i\theta\Delta}|\nabla|^{s_c}\Phi u\|_{L_\theta^{\infty} L_t^q L_x^r} & \leq C\||\nabla|^{s_c}\varphi\|_{L_x^2} + \tilde C \|e^{i\theta\Delta} |\nabla^{s_c} u\|_{L_\theta^{\infty} L_t^q L_x^r}^{p+1} \\
& \leq \||\nabla|^{s_c}\varphi\|_{L_x^2}[C+\tilde C(2C)^{p+1}\||\nabla|^{s_c}\varphi\|_{L_x^2}^p] \\
& \leq 2C\||\nabla|^{s_c}\varphi\|_{L_x^2}
\end{align*}
provided $\||\nabla|^{s_c}\varphi\|_{L_x^2}$ is sufficiently small.

The estimates without $|\nabla|^{s_c}$ are similar, but simpler.  One finds
\begin{align*}
\|e^{i\theta\Delta}\Phi u\|_{L_\theta^{\infty}L_t^q L_x^r} &\leq C\|\varphi\|_{L_x^2} + \tilde C \|e^{i\theta\Delta}|\nabla|^{s_c} u\|_{L_\theta^{\infty} L_t^q L_x^r}^p \|e^{i\theta\Delta} u\|_{L_\theta^{\infty} L_t^q L_x^r} \\
& \leq \|\varphi\|_{L_x^2}[C+\tilde C(2C)^{p+1}\||\nabla|^{s_c}\varphi\|_{L_x^2}^p] \\
& \leq 2C\|\varphi\|_{L_x^2}
\end{align*}
provided $\||\nabla|^{s_c}\varphi\|_{L_x^2}$ is sufficiently small.

To show the contraction property, we let $u,v\in X$ and estimate similarly to above:
\begin{align*}
\|&e^{i\theta\Delta}[\Phi u-\Phi v]\|_{L_\theta^{\infty} L_t^q L_x^r} \\
& \lesssim \biggl\|\int_0^1\int_0^t e^{i(t-s+\theta-\sigma)\Delta}[|e^{i\sigma\Delta}u(s)|^p e^{i\sigma\Delta}u(s)-|e^{i\sigma\Delta}v(s)|^p e^{i\sigma\Delta}v(s)]\,d\sigma\,ds\biggr\|_{L_\theta^{\infty} L_t^q L_x^r} \\
& \lesssim \int_0^1 \| |e^{i\sigma\Delta} u|^p e^{i\sigma\Delta}u - |e^{i\sigma\Delta}v|^p e^{i\sigma\Delta}v \|_{L_t^{q'}L_x^{r'}}\,d\sigma \\
& \lesssim \int_0^1 [\|e^{i\sigma\Delta}|\nabla|^{s_c} u\|_{L_t^q L_x^r}^p + \|e^{i\sigma\Delta}|\nabla|^{s_c}v\|_{L_t^q L_x^r}^p] \|e^{i\sigma\Delta}[u-v]\|_{L_t^q L_x^r} \,d\sigma \\
& \lesssim \bigl[\|e^{i\sigma\Delta}|\nabla|^{s_c} u\|_{L_\sigma^{\infty} L_t^q L_x^r}^p + \|e^{i\sigma\Delta}|\nabla|^{s_c}v\|_{L_\sigma^{\infty} L_t^q L_x^r}\bigr]\|e^{i\sigma\Delta}[u-v\|_{L_\sigma^{\infty} L_t^q L_x^r} \\
& \lesssim \| |\nabla|^{s_c}\varphi\|_{L_x^2}^p \|e^{i\sigma\Delta}[u-v]\|_{L_\sigma^{\infty} L_t^q L_x^r}.
\end{align*}
Thus $\Phi$ is a contraction if $\||\nabla|^{s_c}\varphi\|_{L_x^2}$ is sufficiently small.

It follows that $\Phi$ has a unique fixed point in $X$, which yields the desired global-in-time solution.

We next show that the solution satisfies the more standard estimates
\[
\|u\|_{L_t^\infty L_x^2} \lesssim \|\varphi\|_{L_x^2},\quad \||\nabla|^{s_c}u\|_{L_t^\infty L_x^2} \lesssim \||\nabla|^{s_c}\varphi\|_{L_x^2}. 
\]
The estimate with $|\nabla|^{s_c}$ is more involved, so we focus on this one.  First note that
\[
\||\nabla|^{s_c}e^{it\Delta}\varphi\|_{L_t^\infty L_x^2} = \||\nabla|^{s_c}\varphi\|_{L_x^2}.
\]
For the nonlinear term, we use unitarity of the free propagator and estimate as above: 
\begin{align*}
\biggl\| & |\nabla|^{s_c} \int_0^t e^{i(t-s)\Delta}\int_0^1 e^{-i\sigma\Delta}[|e^{i\sigma\Delta}u(s)|^p e^{i\sigma\Delta}u(s)]\,d\sigma\,ds \biggr\|_{L_t^\infty L_x^2} \\
& \leq \int_0^1 \biggl\| e^{-i\sigma\Delta} \int_0^t e^{i(t-s)\Delta} |\nabla|^{s_c} [|e^{i\sigma\Delta}u(s)|^p e^{i\sigma\Delta}u(s)]\,ds \biggr\|_{L_t^\infty L_x^2} \\
& \leq \int_0^1 \biggl\| \int_0^t e^{i(t-s)\Delta} |\nabla|^{s_c} [|e^{i\sigma\Delta}u(s)|^p e^{i\sigma\Delta}u(s)]\,ds \biggr\|_{L_t^\infty L_x^2} \,d\sigma \\
& \lesssim \int_0^1 \||\nabla|^{s_c}[|e^{i\sigma\Delta}u|^p e^{i\sigma\Delta}u]\|_{L_t^{q'}L_x^{r'}}\,d\sigma \\
& \lesssim \|e^{i\sigma\Delta} |\nabla|^{s_c} u\|_{L_\sigma^{\infty}L_t^q L_x^r}^{p+1} \\
& \lesssim \| |\nabla|^{s_c} \varphi\|_{L_x^2}^{p+1} \ll \| |\nabla|^{s_c}\varphi\|_{L_x^2}
\end{align*}
provided $\| |\nabla|^{s_c}\varphi\|_{L_x^2}$ is sufficiently small. 

The analogous estimates without $|\nabla|^{s_c}$ lead to
\[
\|u\|_{L_t^\infty L_x^2} \lesssim \|\varphi\|_{L_x^2} + \| |\nabla|^{s_c}\varphi\|_{L_x^2}^p \|\varphi\|_{L_x^2} \lesssim \|\varphi\|_{L_x^2}.
\]

Finally, let us prove scattering in $H^{s_c}$.  It suffices to prove that $e^{-it\Delta}u(t)$ is Cauchy in $H^{s_c}$ as $t\to\pm\infty$.  As usual, we focus on the $\dot H^{s_c}$-norm. Without loss of generality, we prove scattering forward in time only.

To prove that $e^{-it\Delta}u(t)$ is Cauchy in $\dot H^{s_c}$ as $t\to\infty$, we use similar estimates to the ones used to prove $u\in L_t^\infty \dot H_x^{s_c}$.  We fix $t_1<t_2$ and use the Duhamel formula to estimate
\begin{align*}
\| &|\nabla|^{s_c}[e^{-it_2\Delta}u(t_2)-e^{-it_1\Delta}u(t_1)]\|_{L_x^2} \\
& = \biggl\|\int_0^1 \int_{t_1}^{t_2} e^{-i(s+\sigma)\Delta}|\nabla|^{s_c}[|e^{i\sigma\Delta}u(s)|^p e^{i\sigma\Delta}u(s)]\,ds\,d\sigma\biggr\|_{L_x^2} \\
& \leq \int_0^1 \biggl\| \int_{t_1}^{t_2} e^{-is\Delta}|\nabla|^{s_c}[|e^{i\sigma\Delta}u(s)|^p e^{i\sigma\Delta}u(s)]\,ds\biggr\|_{L_x^2}\,d\sigma \\
& \lesssim \int_0^1 \||\nabla|^{s_c}[|e^{i\sigma\Delta}u(t)|^p e^{i\sigma\Delta}u(t)]\|_{L_t^{q'}L_x^{r'}((t_1,t_2)\times\R^d)} \,d\sigma \\
& \lesssim \| e^{i\sigma\Delta}|\nabla|^{s_c} u\|_{L_\sigma^{p+1}L_t^q L_x^r((0,1)\times(t_1,\infty)\times\R^d)}^{p+1} \to 0 \qtq{as}t_1\to\infty
\end{align*}
by the dominated convergence theorem.  It follows that $u$ scatters in $\dot H^{s_c}$, and a similar argument shows scattering in $L^2$. 
\end{proof}

%%%%%%%%%%%%%%%%%%%%%%%%%%%%%%%%%
%%%%%%%%%%%%%%%%%%%%%%%%%%%%%%%%%
%%%%%%%%%%%%%%%%%%%%%%%%%%%%%%%%%
%%%%%%%%%%%%%%%%%%%%%%%%%%%%%%%%%

\section{Small-data scattering in the mass-subcritical case}\label{S:T2}

In this section we prove Theorem~\ref{T2}, which we reproduce informally as Theorem~\ref{T2_informal} below.  We suppose $p\in (\tfrac{2}{d},\tfrac{4}{d})\cap[\tfrac{4}{d+2},\tfrac{4}{d})$.  We define the critical weight
\[
\gamma=\tfrac{2}{p}-\tfrac{d}{2}\in(0,1]
\]
and note that
\[
p\gamma<1.
\] 

We first introduce the exponents to be used throughout this section.  We define

\[
q=\tfrac{2(p+2)}{dp-2},\quad r=\tfrac{2d(p+2)}{4+d(2-p)},\quad r_c = \tfrac{dp(p+2)}{2(dp-2)}.
\]
For the range of $p$ we consider, we have that $(q,r)$ is a (non-endpoint) admissible Strichartz pair.  The exponent $r_c$ belongs to $(r,\infty)$ and satisfies the scaling relation
\[
\tfrac{d}{r_c} = \tfrac{d}{r} - \gamma 
\]

The exponents $q,r,r_c$ satisfy the following scaling relations:
\[
\tfrac{2}{q}+\tfrac{d}{r}=\tfrac{d}{2},\quad \tfrac{p+1}{q} + p\gamma = \tfrac{1}{q'},\quad \tfrac{p}{r_c}+\tfrac{1}{r}=\tfrac{1}{r'}.
\]

We will make use of the Galilean vector field $J(t)=x+2it\nabla$, which appears frequently in the scattering theory for mass-subcritical NLS.  One can verify directly that this operator satisfies the identities
\[
J(t) = e^{it\Delta} x e^{-it\Delta} = e^{\frac{i|x|^2}{4t}}(2it\nabla) e^{-\frac{i|x|^2}{4t}}.
\]
In order to work under minimal decay assumptions of the initial data, we introduce the fractional power of $J(t)$ via
\begin{equation}\label{Jgamma}
J^\gamma(t) = e^{it\Delta}|x|^\gamma e^{-it\Delta} =e^{\frac{i|x|^2}{4t}}(-4t^2\Delta)^{\frac{\gamma}{2}} e^{-\frac{i|x|^2}{4t}}. 
\end{equation}

We will utilize the following commutation identity frequently:
\[
J^\gamma(t_1)e^{it_2\Delta}=e^{it_2\Delta}J^\gamma(t_1-t_2). 
\]

The operator $J^\gamma$ satisfies chain rule and Sobolev embedding estimates.  We state them using the specific exponents we need here, but of course they hold for more general exponents satisfying the appropriate scaling relations (and avoiding $L^1$ and $L^\infty$ endpoints as needed). 

\begin{lemma}[Chain rule] The following estimate holds uniformly in $t$:
\[
\|J^\gamma(t)[|\varphi|^p \varphi]\|_{L_x^{r'}} \lesssim \|\varphi\|_{L_x^{r_c}}^p \|J^\gamma(t) \varphi\|_{L_x^{r}}.
\]
\end{lemma}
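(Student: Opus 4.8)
The plan is to reduce the claimed bound to an equality that turns $J^\gamma(t)$ into an honest fractional derivative and then invoke the fractional chain rule (the Christ--Weinstein proposition) already stated above. Concretely, I would start from the second representation in \eqref{Jgamma}, namely $J^\gamma(t) = e^{\frac{i|x|^2}{4t}}(-4t^2\Delta)^{\gamma/2} e^{-\frac{i|x|^2}{4t}}$. Since the conjugation by the unimodular phase $e^{i|x|^2/(4t)}$ is an isometry on every $L^p_x$, we have
\[
\|J^\gamma(t)[|\varphi|^p\varphi]\|_{L_x^{r'}} = \bigl\| (-4t^2\Delta)^{\gamma/2}\bigl[ e^{-\frac{i|x|^2}{4t}}|\varphi|^p\varphi\bigr] \bigr\|_{L_x^{r'}} = (2|t|)^\gamma \bigl\| |\nabla|^\gamma \bigl[ |\psi|^p\psi\bigr] \bigr\|_{L_x^{r'}},
\]
where $\psi := e^{-\frac{i|x|^2}{4t}}\varphi$ (using $|\varphi|^p\varphi \cdot e^{-i|x|^2/(4t)} = |\psi|^p\psi$, since the phase is unimodular). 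This is the key algebraic observation: the oscillatory weight passes through the nonlinearity because $G(z)=|z|^p z$ is homogeneous of degree one in the sense $G(e^{i\eta}z)=e^{i\eta}G(z)$.

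Next I would apply the fractional chain rule to $|\nabla|^\gamma[|\psi|^p\psi]$ with $G(z)=|z|^pz$, $G'$ of size $|z|^p$, $s=\gamma\in(0,1]$, and the Hölder triple $\tfrac{1}{r'}=\tfrac{p}{r_c}+\tfrac{1}{r}$ recorded in the scaling relations above (so $r_1 = r_c/p$ and $r_2 = r$; one checks $1<r',r_2<\infty$ and $1<r_1\le\infty$ hold for the range of $p$ under consideration, using $r_c\in(r,\infty)$ and $p\gamma<1$). This gives
\[
\| |\nabla|^\gamma[|\psi|^p\psi] \|_{L_x^{r'}} \lesssim \| \,|\psi|^p\,\|_{L_x^{r_c/p}}\, \| |\nabla|^\gamma \psi\|_{L_x^r} = \|\psi\|_{L_x^{r_c}}^p\, \| |\nabla|^\gamma\psi\|_{L_x^r}.
\]
Finally I would undo the reduction: $\|\psi\|_{L_x^{r_c}} = \|\varphi\|_{L_x^{r_c}}$ by unimodularity, and $(2|t|)^\gamma \| |\nabla|^\gamma\psi\|_{L_x^r} = \| (-4t^2\Delta)^{\gamma/2}\psi\|_{L_x^r} = \|J^\gamma(t)\varphi\|_{L_x^r}$ again by the phase conjugation being an $L^r_x$-isometry. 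Combining the three displays yields the claim with an implicit constant independent of $t$.

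The only genuine subtlety — and the step I would be most careful about — is the applicability of the fractional chain rule when $r_1 = r_c/p$ could equal $\infty$ (i.e. when $r_c = p$); for the exponents here $r_c = \tfrac{dp(p+2)}{2(dp-2)}$, so $r_c/p = \tfrac{d(p+2)}{2(dp-2)}$ is finite and strictly larger than $1$ precisely on the admissible range $p\in(\tfrac2d,\tfrac4d)$, so the hypothesis $1<r_1\le\infty$ is comfortably met and in fact $r_1<\infty$. One also should note that $G(z)=|z|^pz$ is only $C^1$ when $p\ge 1$; for $p<1$ one runs the standard approximation/regularization of the nonlinearity (or cites the version of the fractional chain rule valid for Hölder-continuous $G'$), which is by now routine. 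Everything else is bookkeeping with the scaling identities already listed, so this lemma is essentially immediate once the phase-conjugation reduction is in place. The same reduction, incidentally, is exactly what one uses to transfer the Sobolev embedding estimate for $|\nabla|^\gamma$ into the corresponding estimate for $J^\gamma(t)$.
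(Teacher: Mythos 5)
Your proposal is correct and follows essentially the same route as the paper's proof: the paper also sets $\tilde\varphi = e^{-\frac{i|x|^2}{4t}}\varphi$, uses the unimodular-phase conjugation to turn $J^\gamma(t)$ into $(-4t^2\Delta)^{\gamma/2}$, applies the Christ--Weinstein fractional chain rule with the same H\"older split $\tfrac{1}{r'}=\tfrac{p}{r_c}+\tfrac{1}{r}$, and then undoes the conjugation. Your added remarks about verifying $1<r_c/p<\infty$ and the $C^1$ hypothesis when $p<1$ are sound points that the paper leaves implicit.
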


\begin{proof} Let 
\[
\tilde\varphi = e^{-\frac{i|x|^2}{4t}}\varphi.
\]
Then by the fractional chain rule, 
\begin{align*}
\|J^\gamma(t)[|\varphi|^p \varphi]\|_{L_x^{r'}} & =\|(-4t^2\Delta)^{\frac{\gamma}{2}}[|\tilde\varphi|^p \tilde \varphi]\|_{L_x^{r'}} \\
& \lesssim \|\tilde\varphi\|_{L_x^{r_c}}^p \| (-4t^2\Delta)^{\frac{\gamma}{2}}\tilde\varphi\|_{L_x^r} \\
& \lesssim \|\varphi\|_{L_x^{r_c}}^p \|J^\gamma(t)\varphi\|_{L_x^r}.
\end{align*}
\end{proof}

\begin{lemma}[Klainerman-Sobolev embedding] The following estimate holds uniformly in $t,\sigma$:  
\[
\|e^{i\sigma\Delta}\varphi\|_{L_x^{r_c}} \lesssim \ |t+\sigma|^{-\gamma} \|e^{i\sigma\Delta}J^\gamma(t)\varphi\|_{L_x^r}.
\]
\end{lemma}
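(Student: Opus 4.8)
The plan is to reduce the claimed estimate to the classical Gagliardo--Nirenberg/Sobolev embedding $\||\cdot|^0\|_{L^{r_c}}\lesssim \|\,\cdot\,\|_{\dot H^\gamma}$ (which holds for our exponents because $\tfrac{d}{r_c}=\tfrac{d}{r}-\gamma$ with $r=2$ replaced by the admissible pair's spatial exponent, i.e. $\tfrac1{r_c}=\tfrac1r-\tfrac\gamma d$ and $r\in(2,\tfrac{2d}{d-2})$), after conjugating away the operators $e^{i\sigma\Delta}$ and $J^\gamma(t)$ using the explicit formulas already recorded in the excerpt. First I would use the commutation identity $J^\gamma(t)e^{i\sigma\Delta}=e^{i\sigma\Delta}J^\gamma(t-\sigma)$ — or, more directly, rewrite $e^{i\sigma\Delta}\varphi$ and $e^{i\sigma\Delta}J^\gamma(t)\varphi$ in terms of the single operator $J^\gamma(t+\sigma)$ acting on $e^{i\sigma\Delta}\varphi$. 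Concretely, setting $\psi=e^{i\sigma\Delta}\varphi$, one checks from \eqref{Jgamma} that $e^{i\sigma\Delta}J^\gamma(t)\varphi = J^\gamma(t+\sigma)\psi$, so the claimed inequality is exactly
\[
\|\psi\|_{L_x^{r_c}} \lesssim |t+\sigma|^{-\gamma}\|J^\gamma(t+\sigma)\psi\|_{L_x^r},
\]
i.e.\ it suffices to prove, for every $\tau\neq 0$ and every $\psi$,
\[
\|\psi\|_{L_x^{r_c}}\lesssim |\tau|^{-\gamma}\|J^\gamma(\tau)\psi\|_{L_x^r}.
\]

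For this last inequality I would invoke the factorization $J^\gamma(\tau)=e^{\frac{i|x|^2}{4\tau}}(-4\tau^2\Delta)^{\gamma/2}e^{-\frac{i|x|^2}{4\tau}}$ from \eqref{Jgamma}. Writing $\tilde\psi=e^{-\frac{i|x|^2}{4\tau}}\psi$, the modulation factor $e^{\pm i|x|^2/4\tau}$ has modulus one and hence does not affect any $L^p_x$ norm, so
\[
\|\psi\|_{L_x^{r_c}}=\|\tilde\psi\|_{L_x^{r_c}},\qquad \|J^\gamma(\tau)\psi\|_{L_x^r}=\|(-4\tau^2\Delta)^{\gamma/2}\tilde\psi\|_{L_x^r}=(2|\tau|)^{\gamma}\||\nabla|^{\gamma}\tilde\psi\|_{L_x^r}.
\]
Thus the desired bound is equivalent to the homogeneous Sobolev embedding $\|\tilde\psi\|_{L_x^{r_c}}\lesssim \||\nabla|^\gamma\tilde\psi\|_{L_x^r}$, which is valid precisely under the scaling relation $\tfrac{d}{r_c}=\tfrac{d}{r}-\gamma$ together with $1<r<r_c<\infty$ — both of which are guaranteed by the standing assumptions on the exponents ($r_c\in(r,\infty)$ was recorded just above, and $r<\infty$, $r>1$ since $(q,r)$ is a non-endpoint admissible pair). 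The powers of $|\tau|$ then match: the factor $(2|\tau|)^{\gamma}$ produced by $(-4\tau^2\Delta)^{\gamma/2}$ cancels against the $|\tau|^{-\gamma}$ on the right-hand side, giving a constant uniform in $\tau$ (equivalently, uniform in $t,\sigma$).

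The main obstacle, such as it is, is purely bookkeeping: one must verify carefully that $e^{i\sigma\Delta}J^\gamma(t)\varphi$ really collapses to $J^\gamma(t+\sigma)e^{i\sigma\Delta}\varphi$ — this follows from $J^\gamma(t_1)e^{it_2\Delta}=e^{it_2\Delta}J^\gamma(t_1-t_2)$ applied with $t_1=t+\sigma$, $t_2=-\sigma$ (or directly from $J^\gamma(t)=e^{it\Delta}|x|^\gamma e^{-it\Delta}$, since then $e^{i\sigma\Delta}J^\gamma(t)\varphi=e^{i(\sigma+t)\Delta}|x|^\gamma e^{-i(\sigma+t)\Delta}\cdot e^{i\sigma\Delta}\varphi=J^\gamma(t+\sigma)[e^{i\sigma\Delta}\varphi]$) — and that the Sobolev exponents lie in the admissible open range so that the inequality $\|\,\cdot\,\|_{L_x^{r_c}}\lesssim \||\nabla|^\gamma\,\cdot\,\|_{L_x^r}$ genuinely applies (no $L^1$ or $L^\infty$ endpoint is hit, and $\gamma<d$, which holds since $\gamma\le 1\le d$). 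There is no genuine analytic difficulty; the content is entirely in the algebraic identity \eqref{Jgamma} reducing $J^\gamma(\tau)$ to a modulation conjugate of $|\tau|^\gamma|\nabla|^\gamma$.
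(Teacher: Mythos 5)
Your proposal is correct and uses exactly the same ingredients as the paper's proof: the commutation identity $e^{i\sigma\Delta}J^\gamma(t)=J^\gamma(t+\sigma)e^{i\sigma\Delta}$, the modulation-conjugate representation $J^\gamma(\tau)=e^{\frac{i|x|^2}{4\tau}}(-4\tau^2\Delta)^{\gamma/2}e^{-\frac{i|x|^2}{4\tau}}$, and the homogeneous Sobolev embedding $\dot W^{\gamma,r}\hookrightarrow L^{r_c}$, arranged in reverse order (you reduce to a claim for $\psi=e^{i\sigma\Delta}\varphi$ first, whereas the paper computes from the left-hand side directly). One tiny slip: in the final paragraph you should take $t_2=\sigma$, not $t_2=-\sigma$, in the commutation identity $J^\gamma(t_1)e^{it_2\Delta}=e^{it_2\Delta}J^\gamma(t_1-t_2)$; your alternative derivation from $J^\gamma(t)=e^{it\Delta}|x|^\gamma e^{-it\Delta}$ is correct, so this does not affect the proof.
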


\begin{proof} We estimate as follows: using Sobolev embedding and the commutation identity, 
\begin{align*}
\|e^{i\sigma\Delta}\varphi\|_{L_x^{r_c}} & = \| e^{-\frac{i|x|^2}{4(t+\sigma)}}e^{i\sigma\Delta}\varphi\|_{L_x^{r_c}} \\
&\lesssim \| |\nabla|^{\gamma} [e^{-\frac{i|x|^2}{4(t+\sigma)}}e^{i\sigma\Delta}]\varphi\|_{L_x^r} \\
& \lesssim |t+\sigma|^{-\gamma}\|(-4[t+\sigma]^2\Delta)^{\frac{\gamma}{2}} [e^{-\frac{i|x|^2}{4(t+\sigma)}}e^{i\sigma\Delta}\varphi]\|_{L_x^r}\\
& \lesssim |t+\sigma|^{-\gamma}\|J^\gamma(t+\sigma)e^{i\sigma\Delta}\varphi\|_{L_x^r} \\
& \lesssim |t+\sigma|^{-\gamma}\|e^{i\sigma\Delta}J^\gamma(t)\varphi\|_{L_x^r}.
\end{align*}
\end{proof}

With all exponents fixed above, we turn to the main theorem of this section. 

\begin{theorem}\label{T2_informal} Let $\varphi\in \F H^{\gamma}$. If $\| |x|^\gamma\varphi\|_{L^2}$ is sufficiently small, we obtain a global solution to \eqref{nls} that scatters. 
\end{theorem}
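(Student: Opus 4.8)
The plan is to run a contraction-mapping argument in the same spirit as the proof of Theorem~\ref{T1_informal}, but tracking the Galilean vector field $J^\gamma(t)$ in place of the fractional derivative $|\nabla|^{s_c}$. First I would set up the Duhamel map
\[
\Phi u(t) = e^{it\Delta}\varphi - i\int_0^t e^{i(t-s)\Delta}\int_0^1 e^{-i\sigma\Delta}\bigl[|e^{i\sigma\Delta}u(s)|^p e^{i\sigma\Delta}u(s)\bigr]\,d\sigma\,ds
\]
on a complete metric space of the form
\[
X = \bigl\{ u : \|e^{i\theta\Delta}u\|_{L_\theta^\infty L_t^q L_x^r} \leq 2C\|\varphi\|_{L^2},\ \ \|e^{i\theta\Delta}J^\gamma(\theta)\,\cdot\, u\|_{L_\theta^\infty L_t^? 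L_x^r}\leq 2C\||x|^\gamma\varphi\|_{L^2} \bigr\},
\]
with metric $d(u,v)=\|e^{i\theta\Delta}[u-v]\|_{L_\theta^\infty L_t^q L_x^r}$; the precise time-integrability in the second norm (a Lorentz space $L_t^{q,2}$ rather than $L_t^q$) is dictated by the Klainerman--Sobolev estimate, as explained below. Note that $\|J^\gamma(0)\varphi\|_{L^2}=\||x|^\gamma\varphi\|_{L^2}$ and $\varphi\in\F H^\gamma$ is exactly the statement that the right-hand side is finite.

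The key computations are the nonlinear estimates. For the term without $J^\gamma$ one writes, using the shifted Strichartz estimate (Proposition~\ref{P:shifted}), then the Klainerman--Sobolev estimate applied at the right time, then H\"older in time:
\[
\Bigl\|\int_0^1\int_0^t e^{i(t-s+\theta-\sigma)\Delta}[|e^{i\sigma\Delta}u(s)|^p e^{i\sigma\Delta}u(s)]\,ds\,d\sigma\Bigr\|_{L_\theta^\infty L_t^q L_x^r}
\lesssim \int_0^1 \bigl\| |e^{i\sigma\Delta}u|^p e^{i\sigma\Delta}u\bigr\|_{L_t^{q',2}L_x^{r'}}\,d\sigma.
\]
Now I would estimate $\||e^{i\sigma\Delta}u|^p e^{i\sigma\Delta}u\|_{L_x^{r'}}\lesssim \|e^{i\sigma\Delta}u\|_{L_x^{r_c}}^p\|e^{i\sigma\Delta}u\|_{L_x^r}$ via the scaling relation $\tfrac{p}{r_c}+\tfrac1r=\tfrac1{r'}$, and then invoke the Klainerman--Sobolev estimate $\|e^{i\sigma\Delta}u\|_{L_x^{r_c}}\lesssim |t+\sigma|^{-\gamma}\|e^{i\sigma\Delta}J^\gamma(t)u\|_{L_x^r}$ for $p$ of the $p+1$ factors, keeping one factor of $\|e^{i\sigma\Delta}u\|_{L_x^r}$ untouched. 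This produces a weight $|t+\sigma|^{-p\gamma}$, which since $p\gamma<1$ is a locally integrable weak singularity, i.e.\ it lies in $L_t^{1/(p\gamma),\infty}$; here Lorentz spaces in time are essential, as a pure $L_t^q$ bound would fail. Balancing the time exponents via the scaling relation $\tfrac{p+1}{q}+p\gamma=\tfrac1{q'}$ closes the estimate. The estimate with $J^\gamma$ is handled the same way, using the chain-rule lemma (Lemma above) to move $J^\gamma(t)$ past $|\varphi|^p\varphi$ at the cost of a factor $\|e^{i\sigma\Delta}u\|_{L_x^{r_c}}^p$, and again applying Klainerman--Sobolev to those $p$ factors; one must also use the commutation identity $J^\gamma(t_1)e^{it_2\Delta}=e^{it_2\Delta}J^\gamma(t_1-t_2)$ repeatedly to keep all of the shifts consistent. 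The contraction estimate is the same computation with a difference of nonlinearities, using the pointwise bound $||a|^pa-|b|^pb|\lesssim (|a|^p+|b|^p)|a-b|$. Smallness of $\||x|^\gamma\varphi\|_{L^2}$ (which controls $\|\varphi\|_{H^s}$ and hence $\|\varphi\|_{L^2}$ for the relevant Sobolev embedding $\F H^\gamma\hookrightarrow L^2$ on the Fourier side) then makes $\Phi$ a contraction.

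Having obtained the fixed point, I would upgrade to the standard bounds $\|e^{-it\Delta}u(t)\|_{\F H^\gamma}=\|u(t)\|_{L^2}+\|J^\gamma(t)u(t)\|_{L^2}\lesssim \|\varphi\|_{\F H^\gamma}$ by running the same nonlinear estimates against the $L_t^\infty L_x^2$ Strichartz endpoint (using unitarity of $e^{i\sigma\Delta}$ to dispose of the inner propagator, exactly as in the proof of Theorem~\ref{T1_informal}), and finally prove scattering by showing $e^{-it\Delta}u(t)$ is Cauchy in $\F H^\gamma$: for $t_1<t_2$ the difference is a Duhamel integral over $(t_1,t_2)$, estimated by the tail $\|e^{i\sigma\Delta}J^\gamma(t)u\|_{L_t^q L_x^r((0,1)\times(t_1,\infty)\times\R^d)}$-type quantity, which tends to $0$ as $t_1\to\infty$ by dominated convergence. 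The main obstacle I anticipate is bookkeeping the interaction between $J^\gamma(t)$, the inner averaging propagator $e^{i\sigma\Delta}$, the outer $e^{it\Delta}$, and the shift $e^{i\theta\Delta}$ built into the function space — making sure every commutation is legitimate and that the weak time singularity $|t+\sigma|^{-p\gamma}$ is placed in a Lorentz norm that the shifted Strichartz estimate can actually absorb (this is why $q\neq 2$ is needed). The underlying reason the argument works is the observation, already noted in \cite{MVH}, that $J^\gamma$ is compatible with the DMNLS nonlinearity; the rest is careful exponent chasing using the stated scaling relations.
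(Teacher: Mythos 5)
Your proposal matches the paper's proof essentially step for step: the same contraction space $X$ built on $L_\theta^\infty L_t^{q,2}L_x^r$ norms with and without $J^\gamma$, the same sequence of commutation identity for $J^\gamma$, shifted Strichartz, chain rule for $J^\gamma$, Klainerman--Sobolev embedding, and Lorentz-space H\"older to absorb the weak singularity $|t+\sigma|^{-p\gamma}\in L_t^{1/(p\gamma),\infty}$, and the same argument for the $L_t^\infty L_x^2$ bounds and scattering via dominated convergence. One small correction worth noting: $\||x|^\gamma\varphi\|_{L^2}$ alone does not control $\|\varphi\|_{L^2}$ (consider $\varphi$ supported near the origin), but the argument never needs this --- the two constraints in $X$ are tracked separately and only smallness of the weighted norm $\||x|^\gamma\varphi\|_{L^2}$ is used to close the contraction and absorb the nonlinear term, while $\|\varphi\|_{L^2}<\infty$ comes for free from $\varphi\in\F H^\gamma$.
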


\begin{proof} Fix $\varphi \in \F H^\gamma$.  Define
\[
\Phi u(t) = e^{it\Delta} \varphi  -i\int_0^t e^{i(t-s)\Delta}\biggl[\int_0^1 e^{-i\sigma\Delta}[|e^{i\sigma\Delta}u(s)|^p e^{i\sigma\Delta}u(s)]\,d\sigma\biggr]\,ds. 
\]
We will show that $\Phi$ is a contraction on a suitable complete Banach space.  In particular, we define 
\[
X=\{u: \|e^{i\theta\Delta}u\|_{L_\theta^{\infty} L_t^{q,2}L_x^r} \leq 2C\|\varphi\|_{L^2},\quad \|e^{i\theta\Delta} J^\gamma u\|_{L_\theta^{\infty} L_t^{q,2} L_x^r} \leq 2C\||x|^\gamma\varphi\|_{L^2}\}
\]
with distance
\[
d(u,v)=\|e^{i\theta\Delta}[u-v]\|_{L_\theta^{\infty} L_t^{q,2}L_x^r}. 
\]
All norms are taken over $(\theta,t,x)\in[0,1]\times\R\times\R^d$ unless otherwise indicated.  The constant $C$ in the definition of $X$ is meant to encode all of the implicit constants in the estimates used below.  %Strictly speaking, we should replace the Lorentz quasi-norm with an equivalent norm (see Hunt's paper), but we will ignore this technicality as it does not actually change the argument at all. 

We let $u\in X$ and aim to show $\Phi u\in X$.

We first estimate the linear term in $\Phi u$. Using time-translation invariance and Strichartz estimates, 
\begin{align*}
\|e^{i\theta\Delta}J^\gamma(t)e^{it\Delta}\varphi\|_{L_\theta^{\infty} L_t^{q,2} L_x^r}&  = \|e^{i(t+\theta)\Delta} |x|^\gamma \varphi\|_{L_\theta^{\infty} L_t^{q,2} L_x^r} \\
& = \|e^{it\Delta}|x|^\gamma\varphi\|_{L_\theta^{\infty} L_t^{q,2}L_x^r} \\
& \lesssim \|e^{it\Delta}|x|^\gamma\varphi\|_{L_t^{q,2} L_x^r}  \lesssim \| |x|^\gamma\varphi\|_{L_x^2}. 
\end{align*}

The nonlinear term is estimated using the shifted Strichartz estimate, the chain rule estimate for $J^\gamma$, the Klainerman--Sobolev embedding estimate, and H\"older's inequality for Lorentz spaces: 
\begin{align*}
\biggl\|& e^{i\theta\Delta}J^\gamma(t)\int_0^t e^{i(t-s)\Delta}\int_0^1 e^{-i\sigma\Delta}\bigl(|e^{i\sigma\Delta}u(s)|^p e^{i\sigma\Delta}u(s)\bigr)\,d\sigma\,ds\biggr\|_{L_\theta^{\infty} L_t^{q,2} L_x^{r}} \\
& = \biggl\|\int_0^1 \int_0^t e^{i(t-s+\theta-\sigma)\Delta}J^\gamma(s+\sigma)\bigl(|e^{i\sigma\Delta}u(s)|^p e^{i\sigma\Delta}u(s)\bigr)\,d\sigma\,ds \biggr\|_{L_\theta^{\infty} L_t^{q,2} L_x^{r}} \\
& \leq \int_0^1\, \biggl\|\int_0^t e^{i(t-s+\theta-\sigma)\Delta}J^\gamma(s+\sigma)\bigl(|e^{i\sigma\Delta}u(s)|^p e^{i\sigma\Delta}u(s)\bigr)\,ds\biggr\|_{L_\theta^{\infty}L_t^{q,2}L_x^r}\,d\sigma \\
%& \lesssim \int_0^1 \bigl\| J^\gamma(t+\sigma)\bigl(|e^{i\sigma\Delta}u(t)|^p e^{i\sigma\Delta}u(t)\bigr)\|_{L_\theta^{\infty}L_t^{q',2}L_x^{r'}}\,d\sigma \\
& \lesssim \int_0^1 \bigl\| J^\gamma(t+\sigma)\bigl(|e^{i\sigma\Delta}u(t)|^p e^{i\sigma\Delta}u(t)\bigr)\|_{L_t^{q',2}L_x^{r'}}\,d\sigma \\
& \lesssim \int_0^1 \bigl\| \, \|e^{i\sigma\Delta}u(t)\|_{L_x^{r_c}}^p \|J^\gamma(t+\sigma)e^{i\sigma\Delta}u(t)\|_{L_x^r}\bigr\|_{L_t^{q',2}}\,d\sigma \\
& \lesssim \int_0^1 \bigl\|\ |t+\sigma|^{-p\gamma}\|J^\gamma(t+\sigma)e^{i\sigma\Delta}u(t)\|_{L_x^r}^{p+1}\bigr\|_{L_t^{q',2}}\,d\sigma \\
& \lesssim \int_0^1 \|\,|t+\sigma|^{-p\gamma}\|_{L_t^{\frac{1}{p\gamma},\infty}} \|e^{i\sigma\Delta}J^\gamma(t)u(t)\|_{L_t^{q,\infty}}^p \|e^{i\sigma\Delta}J^\gamma(t)u(t)\|_{L_t^{q,2}L_x^r}\,d\sigma \\
& \lesssim \int_0^1 \|e^{i\sigma\Delta}J^\gamma(t)u(t)\|_{L_t^{q,2}L_x^r}^{p+1}\,d\sigma \\
& \lesssim \|e^{i\sigma\Delta}J^\gamma(t)u(t)\|_{L_\sigma^{\infty}L_t^{q,2}L_x^{r}}^{p+1}
\end{align*}

Thus 
\begin{align*}
\|e^{i\theta\Delta}J^\gamma \Phi u\|_{L_\theta^{\infty} L_t^{q,2} L_x^r} & \leq  C\| |x|^\gamma\varphi\|_{L_x^2} + \tilde C \|e^{i\theta\Delta}J^\gamma u\|_{L_\theta^{\infty} L_t^{q,2} L_x^r}^{p+1} \\
& \leq \||x|^\gamma\varphi\|_{L_x^2}[C+ \tilde C(2C)^{p+1} \||x|^\gamma\varphi\|_{L_x^2}^p] \\
& \leq 2C\||x|^\gamma\varphi\|_{L_x^2}
\end{align*}
provided $\||x|^\gamma\varphi\|_{L_x^2}$ is sufficiently small. 

The estimates without $J^\gamma$ are similar, but simpler. One finds
\begin{align*}
\|e^{i\theta\Delta}\Phi u\|_{L_\theta^{\infty} L_t^{q,2} L_x^r} & \leq C\|\varphi\|_{L_x^2} + \tilde C \|e^{i\theta\Delta}J^\gamma u\|_{L_\theta^{\infty} L_t^{q,2} L_x^r}^p \|e^{i\theta\Delta}u\|_{L_\theta^{\infty} L_t^{q,2} L_x^r} \\
& \leq \|\varphi\|_{L^2}[C+ \tilde C (2C)^{p+1} \| |x|^\gamma \varphi\|_{L^2}^p] \\
& \leq 2C\|\varphi\|_{L^2}
\end{align*}
provided $\||x|^\gamma\varphi\|_{L^2}$ is sufficiently small. 

To show the contraction property, we let $u,v\in X$ and estimate similarly to above:
\begin{align*}
\|&e^{i\theta\Delta}[\Phi u - \Phi v] \|_{L_\theta^{\infty} L_t^{q,2} L_x^r} \\
& \lesssim \biggl\| \int_0^1 \int_0^t e^{i(t-s+\theta-\sigma)\Delta}\bigl[ |e^{i\sigma\Delta}u|^p e^{i\sigma\Delta}u - |e^{i\sigma\Delta} v|^p e^{i\sigma\Delta}v\bigr]\,d\sigma\,ds\biggr\|_{L_\theta^{\infty} L_t^{q,2} L_x^r} \\
%& \lesssim \int_0^1 \| |e^{i\sigma\Delta}u|^p e^{i\sigma\Delta}u - |e^{i\sigma\Delta} v|^p e^{i\sigma\Delta}v \|_{L_\theta^{\infty} L_t^{q',2} L_x^{r'}}\,d\sigma \\
& \lesssim \int_0^1 \bigl[\|e^{i\sigma\Delta}J^\gamma u\|_{L_t^{q,2} L_x^r}^p + \|e^{i\sigma\Delta}J^\gamma v\|_{L_t^{q,2}L_x^r}^p\bigr]\|e^{i\sigma\Delta}[u-v]\|_{L_t^{q,2}L_x^r} \,d\sigma \\
& \lesssim \bigl[\|e^{i\sigma\Delta}J^\gamma u\|_{L_\sigma^{\infty} L_t^{q,2}L_x^r}^p + \|e^{i\sigma\Delta}J^\gamma v\|_{L_\sigma^{\infty} L_t^{q,2}L_x^r}^p\bigr]\|e^{i\sigma\Delta}[u-v]\|_{L_\sigma^{\infty} L_t^{q,2} L_x^r} \\
& \lesssim \||x|^\gamma\varphi\|_{L_x^2}^p\|e^{i\sigma\Delta}[u-v]\|_{L_\sigma^{\infty} L_t^{q,2} L_x^r}.
\end{align*}
Thus $\Phi$ is a contraction if $\||x|^\gamma\varphi\|_{L_x^2}$ is sufficiently small. 

It follows that $\Phi$ has a unique fixed point in $X$, which yields the desired global-in-time solution.  

We next show that the solution satisfies the more standard estimates
\[
\|u\|_{L_t^\infty L_x^2}\lesssim \|\varphi\|_{L^2},\quad \|J^\gamma u\|_{L_t^\infty L_x^2} \lesssim \| |x|^\gamma\|_{L^2}. 
\]
The estimate with $J^\gamma$ is more involved, so we focus on this one. First note that
\[
\| J^\gamma e^{it\Delta} \varphi\|_{L_t^\infty L_x^2} = \|e^{it\Delta}|x|^\gamma\varphi\|_{L_t^\infty L_x^2} = \||x|^\gamma\varphi\|_{L_x^2}.
\]
For the nonlinear term,
\begin{align*}
\biggl\|& J^\gamma(t)\int_0^t e^{i(t-s)\Delta}\int_0^1 e^{-i\sigma\Delta}[|e^{i\sigma\Delta}u(s)|^p e^{i\sigma\Delta} u(s)]\,d\sigma\,ds \biggr\|_{L_t^\infty L_x^2} \\
& \leq \int_0^1 \biggl\| e^{-i\sigma\Delta}\int_0^t e^{i(t-s)\Delta}J^\gamma(s+\sigma)[|e^{i\sigma\Delta}u(s)|^p e^{i\sigma\Delta}u(s)]\,ds\biggr\|_{L_t^\infty L_x^2}  \,d\sigma \\
& \leq \int_0^1 \biggl\| \int_0^t e^{i(t-s)\Delta}J(s+\sigma)[|e^{i\sigma\Delta}u(s)|^p e^{i\sigma\Delta}u(s)]\,ds\biggr\|_{L_t^\infty L_x^2}\,d\sigma \\
& \lesssim\int_0^1 \| J^\gamma(t+\sigma)[|e^{i\sigma\Delta}u(t)|^p e^{i\sigma\Delta}u(t)]\|_{L_t^{q',2}L_x^{r'}} \,d\sigma \\
& \lesssim \| e^{i\sigma\Delta} J^\gamma u\|_{L_\sigma^{\infty} L_t^{q,2} L_x^r}^{p+1} \\
& \lesssim \| |x|^\gamma \varphi\|_{L_x^2}^{p+1} \ll \| |x|^\gamma \varphi\|_{L_x^2}
\end{align*}
provided $\||x|^\gamma \varphi\|_{L_x^2}$ is small.

The analogous estimates without $J^\gamma$ lead to
\[
\|u\|_{L_t^\infty L_x^2} \lesssim \|\varphi\|_{L_x^2} + \||x|^{\gamma} \varphi\|_{L_x^2}^p \|\varphi\|_{L_x^2} \lesssim \|\varphi\|_{L_x^2}. 
\]

Finally, let us prove scattering in $\F H^{\gamma}$, in the sense that $e^{-it\Delta} u(t)$ converges in $\F H^\gamma$ as $t\to\pm\infty$.  As usual, let us focus on the $ \F\dot H^{\gamma}$ norm.  Without loss of generality, we prove scattering forward in time only.  

Let us show that $\{e^{-it\Delta}u(t)\}$ is Cauchy in $\F \dot H^\gamma$ as $t\to\infty$. The estimates are similar to those used to prove $J^\gamma u\in L_t^\infty L_x^2$. We fix $t_1<t_2$ and use the Duhamel formula to estimate
\begin{align*}
\||x|^\gamma&[e^{-it_2\Delta}u(t_2) - e^{-it_1\Delta}u(t_1)]\|_{L_x^2} \\
& = \biggl\| \int_0^1 \int_{t_1}^{t_2} |x|^\gamma e^{-i(s+\sigma)\Delta}[|e^{i\sigma\Delta}u(s)|^p e^{i\sigma\Delta}u(s)]\,ds\,d\sigma\biggr\|_{L_x^2} \\
& = \biggl\| \int_0^1 \int_{t_1}^{t_2} e^{-i(s+\sigma)\Delta}J^\gamma(\tau+\sigma)[|e^{i\sigma\Delta}u(s)|^p e^{i\sigma\Delta}u(s)]\,ds\,d\sigma\biggr\|_{L_x^2} \\
& \leq \int_0^1 \biggl\| \int_{t_1}^{t_2} e^{-is\Delta}J^\gamma(s+\sigma)[|e^{i\sigma\Delta}u(s)|^p e^{i\sigma\Delta}u(s)]\,ds\biggr\|_{L_x^2}\,d\sigma \\
& \lesssim \int_0^1 \| J^\gamma(t+\sigma)[|e^{i\sigma\Delta}u(t)|^p e^{i\sigma\Delta}u(t)]\|_{L_t^{q',2}L_x^{r'}((t_1,t_2)\times\R^d)} \,d\sigma \\
& \lesssim \|e^{i\sigma\Delta}J^\gamma u\|_{L_\sigma^{p+1} L_t^{q,2} L_x^{r}((0,1)\times(t_1,\infty)\times\R^d)}^{p+1}\to 0 \qtq{as}t_1\to\infty
\end{align*}
by the dominated convergence theorem.  It follows that $u$ scatters in $\F \dot H^{\gamma}$, and a similar argument shows scattering in $L^2$. \end{proof}

%Note if the $L_t^{q,2}$ term is uncomfortable, $p$ copies of these actually came from $L_t^{q,\infty}$ which could be placed in $L_t^q$ instead, where things are more familiar. 

%%%%%%%%%%%%%%%%%%%%%%%%%%%%%%
\section{Large-data scattering for weighted data}\label{S:T3}

The goal of this section is to prove Theorem~\ref{T3}, which is broken up into Proposition~\ref{P:GWP} and Proposition~\ref{P:scatter} below.   We work in the defocusing case with powers $p$ satisfying $\tfrac{4}{d}\leq p\leq\tfrac{4}{d-2}$ (with $p<\infty$ in dimensions $d=1,2$). 

Throughout this section, we have found it convenient to introduce the notation
\[
U(\sigma) = e^{i\sigma\Delta} \qtq{and} w(\sigma)=U(\sigma) u. 
\]
We will also implicitly assume that integrals with respect to $\,dx\,d\sigma$ are taken over $\R^d\times[0,1]$. Thus, for example, in this notation we express the conserved energy of a solution as
\[
E(u) = \int \tfrac12 |\nabla u|^2\,dx + \iint \tfrac{1}{p+2}|w|^{p+2}\,dx\,d\sigma.
\]

We continue to make use of the operator
\[
J(t)= x+2it\nabla = U(t)x U(-t) = M(t) (2it\nabla) M(-t),
\]
along with its fractional powers, where we recall $M(t) = e^{\frac{i|x|^2}{4t}}$.

We begin by establishing global well-posedness.

\begin{proposition}[Global well-posedness in $\Sigma$]\label{P:GWP} Let $\tfrac{4}{d}\leq p\leq\tfrac{4}{d-2}$ (with $\tfrac{4}{d}\leq p<\infty$ in in dimensions $d\in\{1,2\}$).  For any $u_0\in \Sigma$, there exists a unique global-in-time solution $u\in C(\R;\Sigma)$ to \eqref{nls}.  The solution conserves the mass and energy, and remains uniformly bounded in $H_x^1$. 
\end{proposition}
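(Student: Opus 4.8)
The plan is to prove Proposition~\ref{P:GWP} by a standard contraction-mapping/local-theory argument followed by a globalization step via conservation laws, with the key technical point being that the local existence time depends only on $\|u_0\|_{H^1}$ (not on the weighted norm, and not degenerating as $p\to\tfrac{4}{d-2}$). First I would set up the local theory: fix an $H^1$-admissible Strichartz pair in the $(\sigma,x)$ variables adapted to the nonlinearity, namely work in a space of the form $L_t^\infty H_x^1 \cap L_t^{q}W_x^{1,r}$ on a short interval $[0,T]$, and run the contraction on the Duhamel map $\Phi u(t) = e^{it\Delta}u_0 - i\int_0^t e^{i(t-s)\Delta}\int_0^1 e^{-i\sigma\Delta}[|e^{i\sigma\Delta}u(s)|^p e^{i\sigma\Delta}u(s)]\,d\sigma\,ds$. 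Using the shifted Strichartz estimate (Proposition~\ref{P:shifted}) to absorb the $e^{i(\theta-\sigma)\Delta}$ factors coming from the averaging, together with the fractional chain rule and Sobolev embedding, one estimates the nonlinear term and closes the contraction on a small ball, gaining a small power of $T$ (or using a small space-time norm of $e^{it\Delta}u_0$) to make the map contractive. The averaging structure $\int_0^1 e^{-i\sigma\Delta}[\cdots]\,d\sigma$ only helps here, since we can pull the $L_\sigma^\infty$ (or $L_\sigma^1$) norm outside and the shifted Strichartz estimate is uniform in the shift parameters.

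Next I would establish persistence of the weighted regularity: given that $u \in C([0,T];H^1)$ from the previous step, I would show $J(t)u \in C([0,T];L^2)$ by commuting $J$ through the Duhamel formula. The commutation identity $J^\gamma(t_1)e^{it_2\Delta} = e^{it_2\Delta}J^\gamma(t_1-t_2)$ (here with $\gamma=1$, i.e. $J(t_1)e^{it_2\Delta}=e^{it_2\Delta}J(t_1-t_2)$), together with the fact that $J$ acts on the nonlinearity like a derivative via the chain rule $\|J(t)[|\varphi|^p\varphi]\|_{L_x^{r'}}\lesssim\|\varphi\|_{L_x^{r_c}}^p\|J(t)\varphi\|_{L_x^r}$, shows that $Ju$ satisfies an estimate structurally identical to the one for $\nabla u$; applying the same Strichartz/chain-rule machinery with the uniform-in-$T$ $H^1$ bounds already in hand, a Gronwall argument on $[0,T]$ controls $\|Ju(t)\|_{L^2}$, so $u\in C([0,T];\Sigma)$. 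Uniqueness follows from the contraction (or a standard difference estimate in the weaker $L^2$-based space).

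Finally, globalization: since the solution conserves the mass $M(u)=\|u\|_{L^2}^2$ and the energy $E(u) = \tfrac12\|\nabla u\|_{L^2}^2 + \tfrac{1}{p+2}\iint|w|^{p+2}\,dx\,d\sigma$, and the nonlinear term in $E$ is nonnegative in the defocusing case, we get $\|u(t)\|_{H^1}^2 \lesssim M(u_0) + E(u_0)$ uniformly in $t$. Because the local existence time depends only on this $H^1$-norm, the solution can be iteratively extended to all of $\R$, giving $u\in C(\R;\Sigma)$ with the asserted uniform $H_x^1$ bound (and the $\Sigma$ bound may grow in time, but that is not claimed here). The main obstacle I anticipate is establishing the local existence time depending only on $\|u_0\|_{H^1}$ uniformly up to the energy-critical power $p=\tfrac{4}{d-2}$: for the standard NLS the energy-critical local time genuinely depends on the profile of $u_0$, so the gain here must come from the averaging in $\sigma$. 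Concretely, one should exploit that in the relevant Strichartz norm one controls $\|e^{i\sigma\Delta}u\|_{L_{\sigma}^? L_t^q L_x^r}$ and that integrating in $\sigma\in[0,1]$ over a unit interval (rather than all of $\R$) produces a genuinely $T$-dependent (or otherwise small) factor even at the critical exponent, so that the contraction closes on a ball whose radius and time are dictated purely by $\|u_0\|_{H^1}$; making this quantitative and checking it does not degenerate as $p\uparrow\tfrac{4}{d-2}$ is the delicate part.
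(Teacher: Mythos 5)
There is a genuine gap in your local-theory step, and it sits exactly at the point you flagged as ``the delicate part.'' You propose to run the contraction in a space of the form $L_t^\infty H_x^1 \cap L_t^q W_x^{1,r}$ and to use the shifted Strichartz estimate to absorb the $e^{i(\theta-\sigma)\Delta}$ factors. That is a Strichartz estimate \emph{in the time variable $t$}, and at the energy-critical power $p=\tfrac{4}{d-2}$ it is exactly the estimate that gives no gain of $T$ and makes the local time depend on the profile of $u_0$. Your suggested fix --- that ``integrating in $\sigma\in[0,1]$ over a unit interval \ldots produces a genuinely $T$-dependent \ldots factor even at the critical exponent'' --- is not correct as stated: the $\sigma$-integral is over a fixed interval $[0,1]$, independent of $T$, so shrinking $T$ cannot make it small. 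You correctly sensed the gain must come from the $\sigma$-averaging, but you did not locate the mechanism.

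The paper's mechanism is different and simpler. It never applies Strichartz in $t$ at all; it works only with $L_t^\infty L_x^2$ (together with its $J$- and $\nabla$-weighted versions), using unitarity of $e^{i(t-s)\Delta}$ to reduce the Duhamel term to $\int_0^T \| N(s)\|_{H_x^1}\,ds$, where $N(s)=\int_0^1 e^{-i\sigma\Delta}[|e^{i\sigma\Delta}u(s)|^p e^{i\sigma\Delta}u(s)]\,d\sigma$. The crucial observation is that for each \emph{fixed} time $s$, the operator $\int_0^1 e^{-i\sigma\Delta}(\cdot)\,d\sigma$ is a dual Strichartz operator in $\sigma$ on the unit interval, so
\[
\|\nabla N(s)\|_{L_x^2} \lesssim \|\nabla[|e^{i\sigma\Delta}u(s)|^p e^{i\sigma\Delta}u(s)]\|_{L_\sigma^{q'}L_x^{r'}} \lesssim \|e^{i\sigma\Delta}u(s)\|_{L_\sigma^\infty L_x^{p+2}}^p \|e^{i\sigma\Delta}\nabla u(s)\|_{L_\sigma^q L_x^{p+2}} \lesssim \|u(s)\|_{H_x^1}^{p+1},
\]
with $(q,r)=(\tfrac{4(p+2)}{dp},p+2)$ an admissible pair, the last step using Sobolev embedding $H^1\hookrightarrow L^{p+2}$ for the $L_\sigma^\infty$ factor and the forward Strichartz estimate in $\sigma$ for the $L_\sigma^q$ factor. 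The full power of $T$ then comes trivially from $\int_0^T\,ds$, and the bound is uniform up to and including $p=\tfrac{4}{d-2}$. Without this realization --- that the Strichartz estimate is applied in the $\sigma$ variable, with the outer $t$ variable handled only by unitarity and $L_t^1$ on $[0,T]$ --- your contraction does not close with a time depending only on $\|u_0\|_{H^1}$, and your globalization step then fails at the critical power. The remainder of your outline (incorporating $Ju$ via the commutation identities and chain rule, globalization via conservation of mass and the coercive defocusing energy) is essentially what the paper does, though the paper builds the $\|Ju\|_{L_t^\infty L_x^2}$ bound directly into the fixed-point space rather than running a separate Gronwall argument.
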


\begin{proof} %The argument is fairly standard, so we will only sketch the proof.  The main difference is to incorporate the time averaging by the free propagator into the norm, using the shifted Strichartz estimate as needed. 

We will establish local well-posedness with existence time depending on the $H^1$-norm of the data.  As conservation of mass and energy imply uniform $H^1$ bounds, global well-posedness will then follow.  We will incorporate the $\Sigma$ assumption by using the vector field $J(t)$. 

We run a contraction mapping argument with the operator
\[
\Phi u = e^{it\Delta} u_0 - i\int_0^t\int_0^1 e^{i(t-s-\sigma)\Delta}[|e^{i\sigma\Delta}u(s)|^p e^{i\sigma\Delta}u(s)]\,d\sigma\,ds.
\]

We define the space
\[
X=\{u\in L_t^\infty([0,T];\Sigma): \|u\|_{L_t^\infty H_x^1}\leq C_0\|u_0\|_{H_x^1} \qtq{and}\|Ju\|_{L_t^\infty L_x^2}\leq C_0\|xu_0\|_{L_x^2}\},
\]
endowed with the $L_t^\infty L_x^2$ distance.  Here $C_0$ is a universal constant and the space-time norms are taken over $[0,T]\times\R^d$. 

In the estimates below, we will make use of the Strichartz admissible pair 
\[
(q,r) := (\tfrac{4(p+2)}{dp},p+2)
\]
and write $F(z)=|z|^p z$. 

We first check that $\Phi:X\to X$.  We begin by observing that
\[
\|e^{it\Delta}u_0\|_{L_t^\infty H_x^1} \lesssim \|u_0\|_{H_x^1} \qtq{and} \|J(t)e^{it\Delta}u_0\|_{L_t^\infty L_x^2}\lesssim \|xu_0\|_{L_x^2}. 
\]
Next, we use the Strichartz estimate, H\"older's inequality, and Sobolev embedding to obtain
\begin{equation}\label{av-gain}
\begin{aligned}
\biggl\|\nabla \int_0^t\int_0^1 &e^{i(t-s-\sigma)\Delta}F(e^{i\sigma\Delta}u)\,d\sigma\,ds\biggr\|_{L_t^\infty L_x^2} \\
& \lesssim \biggl\| \int_0^t \biggl\| \int_0^1 e^{-i\sigma\Delta} \nabla F(e^{i\sigma\Delta}u)\,d\sigma\biggr\|_{L_x^2} \,ds \biggr\|_{L_t^\infty} \\
& \lesssim \int_0^T \| \nabla F(e^{i\sigma\Delta}u(s))\|_{L_\sigma^{q'} L_x^{r'}} \,ds \\
& \lesssim \int_0^T \|e^{i\sigma\Delta}u(s)\|_{L_\sigma^\infty L_x^{r}}^p \| e^{i\sigma\Delta}\nabla u(s)\|_{L_\sigma^q L_x^r} \,ds \\
& \lesssim \int_0^T \|u(s)\|_{H_x^1}^p \|\nabla u(s)\|_{L_x^2}\,ds \\
& \lesssim T\|u\|_{L_t^\infty H_x^1}^p \|\nabla u\|_{L_t^\infty L_x^2}  \lesssim T\|u_0\|_{H_x^1}^p \|u_0\|_{H_x^1}. 
\end{aligned}
\end{equation}
Estimating similarly yields
\[
\biggl\| \int_0^t\int_0^1 e^{i(t-s-\sigma)\Delta}F(e^{i\sigma\Delta}u)\,d\sigma\,ds\biggr\|_{L_t^\infty L_x^2} 
\lesssim T\|u_0\|_{H_x^1}^p \|u\|_{L_t^\infty L_x^2}.
\]
Using the commutation properties and chain rule for $J(t)$, we can estimate similarly to obtain
\begin{align*}
\biggl\|J(t) \int_0^t\int_0^1 &e^{i(t-s-\sigma)\Delta}F(e^{i\sigma\Delta}u)\,d\sigma\,ds\biggr\|_{L_t^\infty L_x^2} \\
& \lesssim \biggl\| \int_0^t \biggl\| \int_0^1 e^{-i\sigma\Delta} J(s+\sigma) F(e^{i\sigma\Delta}u)\,d\sigma\biggr\|_{L_x^2} \,ds \biggr\|_{L_t^\infty} \\
& \lesssim \int_0^T \| J(s+\sigma) F(e^{i\sigma\Delta}u(s))\|_{L_\sigma^{q'} L_x^{r'}} \,ds \\
& \lesssim \int_0^T \|e^{i\sigma\Delta}u(s)\|_{L_\sigma^\infty L_x^{r}}^p \| e^{i\sigma\Delta}J(s) u(s)\|_{L_\sigma^q L_x^r} \,ds \\
& \lesssim \int_0^T \|u(s)\|_{H_x^1}^p \|J(s) u(s)\|_{L_x^2}\,ds \\
& \lesssim T\|u\|_{L_t^\infty H_x^1}^p \|J(s) u\|_{L_t^\infty L_x^2} \lesssim T\|u_0\|_{H_x^1}^p \| xu_0\|_{L_x^2}. 
\end{align*}

Choosing $T=T(\|u_0\|_{H_x^1})$ sufficiently small, we deduce that $\Phi:X\to X$. 

Now suppose $u,v\in X$.  Then, estimating as above, we find that
\begin{align*}
\|&\Phi(u) - \Phi(v)\|_{L_t^\infty L_x^2} \\
& \lesssim \biggl\| \int_0^t \int_0^1 e^{i(t-s-\sigma)\Delta}[F(e^{i\sigma\Delta}u)-F(e^{i\sigma\Delta}v)]\,d\sigma\,ds\biggr\|_{L_t^\infty L_x^2} \\
& \lesssim \biggl\| \int_0^t \biggl\| \int_0^1 e^{-i\sigma\Delta}[F(e^{i\sigma\Delta}u)-F(e^{i\sigma\Delta}v)]\,d\sigma\bigg\|_{L_x^2} \,ds \biggr\|_{L_t^\infty} \\
& \lesssim \int_0^T \|F(e^{i\sigma\Delta}u)-F(e^{i\sigma\Delta}v)\|_{L_\sigma^{q'} L_x^{r'}}\,ds \\
& \lesssim \int_0^T [\|e^{i\sigma\Delta}u(s)\|_{L_\sigma^\infty L_x^r}^p + \|e^{i\sigma\Delta}v(s)\|_{L_\sigma^\infty L_x^r}^p]\|e^{i\sigma\Delta}[u-v]\|_{L_\sigma^q L_x^r}\,ds \\
& \lesssim T\|u_0\|_{H_x^1}^p \|u-v\|_{L_t^\infty L_x^2},
\end{align*}
so that $\Phi$ is a contraction for $T=T(\|u_0\|_{H_x^1})$ sufficiently small.

We thus obtain a solution $u\in C([0,T];\Sigma)$ for $u_0\in \Sigma$, with time of existence depending only on $\|u_0\|_{H_x^1}$.  Using conservation of mass and energy, we have global \emph{a priori} bounds for $u$ in $H_x^1$, and hence we may extend the solution globally in time. \end{proof}
Note that in the result above, we obtain a local solution with time of existence depending only on the $H_x^1$-norm of the data, even in the `energy-critical' case $p=\tfrac{4}{d-2}$.  Evidently, the averaging effect of the nonlinearity is helpful for short times. Indeed, repeating the estimates above for an energy-critical power-type nonlinearity would lead to
\[
\biggl\|\int_0^t e^{i(t-s)\Delta}\nabla |u|^{\frac{4}{d-2}}u\,ds\biggr\|_{L_t^\infty L_x^2} \lesssim \|u\|_{L_t^\infty H_x^1}^{\frac{4}{d-2}}\|\nabla u\|_{L_t^\infty L_x^2}.
\]
This should be contrasted with \eqref{av-gain}, in which the analogous estimates gain a full power of $T$.

We turn now to the question of scattering for large data in $\Sigma$.  As in the case of the power-type NLS, we will proceed by computing a pseudoconformal energy identity, which is based on computing the time derivative of $\|Ju\|_{L^2}^2$.  From this identity, we will be able to use Gronwall's inequality to obtain estimates for the solution (at least, in the regime $p>\tfrac{4}{d}$).  These estimates will yield a `critical' space-time estimate (i.e. a bound involving space-time exponents $(q_c,r_c)$ obeying the critical scaling relation $\tfrac{2}{q_c}+\tfrac{d}{r_c}=\tfrac{2}{p}$).  Finally, we will use this space-time estimate to deduce scattering for the solution.

We remind the reader of the notation $w=w(\sigma)=U(\sigma)u$, which is meant to make the formulas more compact below.  When we write $w$, we really mean $w=w(\sigma,t,x)=e^{i\sigma\Delta}u(t,x)$.  Similarly, when we write $w(1)$, we really mean $w(1)=w(1,t,x)=e^{i\Delta}u(t,x)$.

\begin{lemma}[Pseudoconformal energy identity]\label{PCE} Let $w=w(\sigma)=U(\sigma)u$, with $u$ a solution to \eqref{nls}. The following identity holds: 
\begin{align*}
\tfrac{d}{dt}\bigl[\|Ju\|_{L_x^2}^2 + \tfrac{8 t^2}{p+2}\iint |w|^{p+2}\,dx\,d\sigma\bigr] & = -\tfrac{\frac{dp}{2}-4}{t} \tfrac{8t^2}{p+2}\iint |w|^{p+2}\,dx\,d\sigma \\
& \quad - \tfrac{\frac{dp}{2}-2}{t^2} \tfrac{8t^2}{p+2}\iint \sigma |w|^{p+2}\,dx\,d\sigma \\
& \quad - \tfrac{8(t+1)}{p+2}\int |w(1)|^{p+2}\,dx. 
\end{align*}
\end{lemma}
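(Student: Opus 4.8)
The plan is to compute $\frac{d}{dt}\|Ju\|_{L_x^2}^2$ directly and then absorb the non-sign-definite boundary-type terms into an auxiliary quantity so that the identity closes.

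Here is the approach in more detail. First I would recall the two key algebraic facts about the Galilean vector field: $J(t) = M(t)(2it\nabla)M(-t)$, and the Leibniz-type commutator with the equation. Writing the equation \eqref{nls} as $i\partial_t u + \Delta u = N(u)$ with $N(u) = \int_0^1 U(-\sigma)\bigl[|U(\sigma)u|^p U(\sigma)u\bigr]\,d\sigma$, one has $[i\partial_t + \Delta, J] = 0$, so that $(i\partial_t+\Delta)(Ju) = J N(u)$. Then the standard computation gives
\begin{equation*}
\tfrac{d}{dt}\|Ju\|_{L_x^2}^2 = 2\,\Im \int \overline{Ju}\, (i\partial_t u + \Delta u - \Delta u)\, \text{-type terms} = 2\,\Im\int \overline{Ju}\, J N(u)\,dx,
\end{equation*}
after the $\Delta$ contributions cancel by self-adjointness. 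So the real content is to evaluate $\Im\int \overline{Ju}\,JN(u)\,dx$ and identify it with the right-hand side of the claimed identity minus the time derivative of the $\tfrac{8t^2}{p+2}\iint|w|^{p+2}$ correction term.

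The main computational step — and the place I expect the real work to be — is rewriting $J N(u)$ using $J(t) = U(t) x U(-t)$ together with the commutation identity $J^\gamma(t_1)e^{it_2\Delta} = e^{it_2\Delta}J^\gamma(t_1-t_2)$ (here with $\gamma=1$), which gives $J(t)U(-\sigma) = U(-\sigma)J(t+\sigma)$. Thus
\begin{equation*}
J(t)N(u) = \int_0^1 U(-\sigma)\, J(t+\sigma)\bigl[|w(\sigma)|^p w(\sigma)\bigr]\,d\sigma,
\end{equation*}
and pairing against $\overline{Ju} = U(-\sigma)\overline{J(t+\sigma)(U(\sigma)u)}$-flavored expressions (using unitarity of $U(-\sigma)$ to move everything onto the same propagator), one reduces to evaluating, for each $\sigma$, a quantity of the form $\Im\int \overline{J(t+\sigma)w(\sigma)}\, J(t+\sigma)[|w(\sigma)|^p w(\sigma)]\,dx$. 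Using $J(\tau) = M(\tau)(2i\tau\nabla)M(-\tau)$ and conjugating by $M(\tau)$ with $\tau=t+\sigma$, this becomes a weighted version of the familiar identity $\Im\int \overline{\nabla f}\cdot\nabla(|f|^p f) = \tfrac{p}{p+2}\,\text{(gradient of } |f|^{p+2}\text{ paired with something)}$ — concretely one uses $2\,\Im(\bar g \,\partial_j(|g|^p g)) = \tfrac{2}{p+2}\partial_j|g|^{p+2}$ after taking real/imaginary parts carefully, integrating by parts in $x$, and keeping track of the factor $4\tau^2$. This produces the term $\tfrac{8\tau^2}{p+2}\iint |w|^{p+2}$ structure, and differentiating the $t$-dependence of $\tau^2 = (t+\sigma)^2$ versus the isolated correction term $\tfrac{8t^2}{p+2}$ is exactly what generates the three right-hand side terms: the $-[\tfrac{dp}{2}-4]/t$ coefficient comes from the scaling/virial part (the discrepancy from the usual $-[\tfrac{dp}{2}-2]$ is precisely the $+2$ shift contributed by the $e^{i\sigma\Delta}$-averaging), the $-[\tfrac{dp}{2}-2]/t^2$ term with the extra factor of $\sigma$ comes from expanding $(t+\sigma)^2 = t^2 + 2t\sigma + \sigma^2$ and collecting the cross/quadratic-in-$\sigma$ pieces, and the final $-\tfrac{8(t+1)}{p+2}\int|w(1)|^{p+2}$ boundary term arises from the $\partial_\sigma$ that appears when one integrates by parts in the $\sigma$ variable (the endpoint $\sigma=1$ survives, the $\sigma=0$ endpoint cancels against other contributions).

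The subtle bookkeeping point — which I would flag as the main obstacle — is correctly handling the interplay between the $\sigma$-integration and the structure $J(t)U(-\sigma) = U(-\sigma)J(t+\sigma)$: one must decide whether to integrate by parts in $\sigma$ before or after performing the $x$-space virial manipulation, and the appearance of the genuinely non-sign-definite lower-order terms (the $\sigma$-weighted term and the $w(1)$ boundary term) means there is no shortcut via a clean ``virial inequality.'' I would organize the proof by (i) reducing to $2\Im\int\overline{Ju}\,JN(u)$, (ii) conjugating out $M(t+\sigma)$ and $U(-\sigma)$ to land on a pure weighted-gradient pairing of $M(-(t+\sigma))w(\sigma)$ with $|{\cdot}|^p{\cdot}$ of itself, (iii) running the pointwise-in-$(\sigma)$ virial identity and integrating by parts in $x$, and (iv) finally performing the $\sigma$-integration by parts and matching terms against $\tfrac{d}{dt}\bigl[\tfrac{8t^2}{p+2}\iint|w|^{p+2}\bigr]$, where the $t$-derivative hits both the explicit $t^2$ and, via $w=e^{i\sigma\Delta}u(t)$, the solution itself through the equation — that last hidden $t$-dependence is what must cancel the bulk of the nonlinear terms generated in step (iii), leaving exactly the three advertised remainders.
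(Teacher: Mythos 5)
Your opening reduction is sound: since $[i\partial_t+\Delta,J]=0$, one has $(i\partial_t+\Delta)(Ju)=JN(u)$, hence $\tfrac{d}{dt}\|Ju\|_{L^2}^2 = 2\Im\int\overline{Ju}\,JN(u)\,dx$, and by the commutation $J(t)U(-\sigma)=U(-\sigma)J(t+\sigma)$ together with self-adjointness of $J$ this is $2\Im\iint \overline{J^2(t+\sigma)w}\,|w|^pw\,dx\,d\sigma$. This is a genuinely different organization from the paper, which instead expands $\|Ju\|_{L^2}^2 = \int |x|^2|u|^2 + 4t^2|\nabla u|^2 - 4t\Im(x\bar u\cdot\nabla u)\,dx$ and differentiates each piece separately, then uses energy conservation at the end. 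Your approach is more compact and, when executed correctly, arrives at the same answer with fewer moving parts.

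However, your step (iii) contains a concrete error. The identity you invoke, $2\,\Im\bigl(\bar g\,\partial_j(|g|^p g)\bigr) = \tfrac{2}{p+2}\partial_j|g|^{p+2}$, is false: a direct computation gives $\Im\bigl(\bar g\,\partial_j(|g|^p g)\bigr)=|g|^p\,\Im(\bar g\,\partial_j g)$, whereas $\tfrac{1}{p+2}\partial_j|g|^{p+2}=|g|^p\,\Re(\bar g\,\partial_j g)$. The identity that \emph{is} used in these calculations is $\Re\bigl(|g|^p g\,\partial_j\bar g\bigr)=\tfrac{1}{p+2}\partial_j|g|^{p+2}$, with $\Re$ and the complex conjugate on the derivative factor. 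More importantly, your plan of ``conjugating out $M(t+\sigma)$ to land on a pure weighted-gradient pairing $\Im\int\overline{\nabla g}\cdot\nabla(|g|^p g)\,dx$'' does not actually simplify matters: that quantity is \emph{not} a total derivative and does not collapse via any familiar one-line identity. The correct route is to keep $J^2(t+\sigma)=x_k^2-4(t+\sigma)^2\partial_{kk}+2i(t+\sigma)(x_k\partial_k+\partial_k x_k)$ expanded and evaluate each piece separately: the $x_k^2$ contribution vanishes since $\Im\int|x|^2|w|^{p+2}=0$; the Laplacian contribution is converted using $\partial_\sigma|w|^{p+2}=(p+2)\Im(|w|^p w\,\Delta\bar w)$ followed by a $\sigma$-integration by parts (which produces the $|w(1)|^{p+2}$ boundary term at $\sigma=1$ \emph{and} a $|u|^{p+2}$ boundary term at $\sigma=0$, the latter then cancelling against $\tfrac{d}{dt}\iint|w|^{p+2}$); and the dilation contribution gives the true virial term via $\Re(|w|^pw\,\partial_k\bar w)=\tfrac{1}{p+2}\partial_k|w|^{p+2}$. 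So the skeleton of (i)--(ii) and (iv) is right, but (iii) as written would not close, and the $\sigma=0$ boundary term you do not mention is essential to the bookkeeping in (iv).
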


\begin{proof} Throughout the proof we denote partial derivatives with subscripts and employ the Einstein summation notation (i.e. repeated indices are summed).   

We expand
\[
\int |Ju|^2\,dx = \int |x|^2|u|^2 + 4t^2|\nabla u|^2 - 4t\Im(x\bar u\cdot\nabla u)\,dx.
\]
Thus
\begin{equation}\label{dtJu}
\begin{aligned}
\tfrac{d}{dt}\|Ju\|_{L_x^2}^2 & = \tfrac{d}{dt}\int |x|^2|u|^2 \,dx - 4\Im \int x\bar u \cdot\nabla u\,dx \\
& \quad- 4t\tfrac{d}{dt}\int \Im x\bar u \cdot \nabla u \,dx + 8t \int |\nabla u|^2\,dx \\
& \quad + 8t^2\tfrac{d}{dt}\int \tfrac12 |\nabla u|^2\,dx. 
\end{aligned}
\end{equation}

We first compute
\begin{align*}
\tfrac{d}{dt}\int |x|^2|u|^2\,dx & = -2\Im \int|x|^2 \bar u u_{jj}\,dx + 2\Im\int |x|^2 \bar u F\,dx,
\end{align*}
where we denote
\[
F=\int_0^1 U(-\sigma)[|w|^p w]\,d\sigma. 
\]

The quadratic terms lead to
\begin{align*}
2\Im\int 2x_j \bar u u_j \,dx = 4\Im \int x\bar u \cdot\nabla u \,dx,
\end{align*}
which leads to a cancellation in the first line of \eqref{dtJu}. 

The nonlinear term is computed as follows:
\begin{align*}
2\lambda&\Im \iint |x|^2 \bar u U(-\sigma)[|w|^p w]\,dx\,d\sigma \\
& = 2\lambda\Im \iint U(-\sigma)|x|^2 U(\sigma)\bar w\cdot |w|^p w\,dx \,d\sigma\\
& = 2\lambda\Im \iint \overline{J^2(\sigma)w}\cdot |w|^p w\,dx\,d\sigma.
\end{align*}
We expand
\[
J^2(\sigma) = x_k^2 - 4\sigma^2\partial_{kk} + 2i\sigma(x_k\partial_k + \partial_k x_k). 
\]

The contribution of the $x_k^2$ term is zero, since
\[
2\lambda \Im \iint |x|^2 \bar w\cdot |w|^p w \,dx = 0.
\]
For the contribution of the $\partial_{kk}$ term we use $\partial_\sigma U(\sigma)=i\Delta U(\sigma),$ which implies
\[
\partial_\sigma |w(\sigma)|^{p+2} = (p+2)\Im (|w|^p w \Delta \bar w). 
\]
Thus
\begin{align*}
2&\Im \iint (-4\sigma^2\Delta \bar w)\cdot |w|^p w\,dx\,d\sigma\\
& = -\tfrac{8}{p+2}\iint \sigma^2\partial_\sigma|w|^{p+2}\,dx\,d\sigma \\
& = \tfrac{16}{p+2}\iint \sigma |w|^{p+2}\,dx\,d\sigma - \tfrac{8}{p+2}\int |w(1)|^{p+2}\,dx. 
\end{align*}

For the remaining terms in $J^2(\sigma)$, we obtain
\begin{align*}
-4&\Re\iint \sigma[x_k\partial_k + \partial_k x_k]\bar w \cdot |w|^p w \,dx\,d\sigma \\
& = -8\iint \sigma x_k \Re[|w|^p w\partial_k\bar w]\,dx\,d\sigma - 4\Re\iint \sigma |w|^{p+2}\delta_{kk}\,dx\,d\sigma \\
& = -\tfrac{8}{p+2}\iint \sigma x_k \partial_k |w|^{p+2}\,dx\,d\sigma - 4d\iint \sigma|w|^{p+2}\,dx\,d\sigma \\
& = \tfrac{8d}{p+2}\iint \sigma |w|^{p+2}\,dx\,d\sigma -4d\iint \sigma |w|^{p+2}\,dx\,d\sigma. 
\end{align*}

Thus
\begin{equation}\label{dtJu1}
\begin{aligned}
\tfrac{d}{dt}& \int |x|^2 |u|^2\,dx - 4\Im\int x\bar u\cdot\nabla u \,dx \\
& = \tfrac{4(4-dp)}{p+2}\iint \sigma |w|^{p+2}\,dx\,d\sigma - \tfrac{8}{p+2}\int |w(1)|^{p+2}\,dx. \\
% & = \lambda[\tfrac{8(d+2)}{p+2}-4d]\iint \sigma |w|^{p+2}\,dx\,d\sigma - \tfrac{8\lambda}{p+2}\int |w(1)|^{p+2}\,dx. 
\end{aligned}
\end{equation}

Our next task is to compute $\tfrac{d}{dt}\int \Im x\bar u \cdot\nabla u\,dx.$ We begin by using the equation \eqref{nls} to obtain
\begin{align*}
\tfrac{d}{dt}\Im \int x_k \bar u u_k\,dx & = \Re\int x_k[\bar u u_{jjk} -\bar u_k u_{jj}]\,dx \\
& \quad + \Re\iint x_k[u_kU(\sigma)|w|^p \bar w - \bar u U(-\sigma)\partial_k(|w|^p w)]\,dx\,d\sigma.
\end{align*}
After integration by parts, the contribution of the first term becomes 
\[
\Re\int x_k(\bar u u_{jjk} - \bar u_k u_{jj})\,dx = \int 2|\nabla u|^2\,dx.
\]
We turn to the nonlinear contribution. Writing $J_k(\sigma) = x_k+2i\sigma\partial_k$, we use unitarity of $U(\sigma)$ to obtain
\begin{align*}
&\Re\iint x_k[u_k\,U(\sigma)|w|^p \bar w - \bar u U(-\sigma)\partial_k(|w|^p w)]\,dx\,d\sigma \\
& = \Re\iint \overline{U(\sigma)x_kU(-\sigma) w_k}\,|w|^p w - \overline{U(\sigma)x_k U(-\sigma) w}\,\partial_k(|w|^p w)\,dx\,d\sigma \\
& = \Re \iint |w|^p w \cdot \overline{J_k\partial_k w + \partial_k J_k w}\,dx\,d\sigma.
\end{align*}

We now compute
\[
J_k\partial_k + \partial_k J_k = x_k\partial_k + \partial_k x_k + {4}i\sigma\partial_{kk}. 
\]
The contribution of the real part of this operator is
\begin{align*}
\Re&\iint |w|^p w(x_k\partial_k \bar w + \partial_k[x_k\bar w])\,dx\,d\sigma \\
& = \Re\iint 2|w|^p w \bar w_k x_k + |w|^{p+2}\delta_{kk} \,dx\,d\sigma \\
& = \tfrac{2}{p+2}\iint \partial_k (|w|^{p+2})\,x_k\,dx\,d\sigma + d\iint |w|^{p+2}\,dx\,d\sigma \\
& = (d-\tfrac{2d}{p+2})\iint |w|^{p+2}\,dx\,d\sigma.
\end{align*}
The contribution of the imaginary part of this operator is 
\begin{align*}
{4}&\Im \iint \sigma|w|^p w\Delta\bar w\,dx\,d\sigma \\
& = \tfrac{{4}}{p+2}\iint \sigma\partial_\sigma(|w|^{p+2})\,dx\,d\sigma \\
& = -\tfrac{{4}}{p+2}\iint |w|^{p+2}\,dx\,d\sigma+\tfrac{{4}}{p+2}\int |w(1)|^{p+2}\,dx. 
\end{align*}

Thus (returning to \eqref{dtJu}),
\begin{equation}\label{dtJu2}
\begin{aligned}
-4t&\tfrac{d}{dt}\int \Im x\bar u \cdot\nabla u \,dx + 8t\int |\nabla u|^2\,dx \\
& = -4 t\bigl[\tfrac{dp-{4}}{p+2}\bigr]\iint |w|^{p+2}\,dx\,d\sigma -\tfrac{{16}t}{p+2}\int |w(1)|^{p+2}\,dx.
\end{aligned}
\end{equation}

For the last term in \eqref{dtJu}, we use conservation of energy, which implies
\[
8t^2\tfrac{d}{dt}\int \tfrac12|\nabla u|^2\,dx = -\tfrac{8t^2}{p+2}\tfrac{d}{dt}\iint |w|^{p+2}\,dx\,d\sigma. 
\]

Combining this with \eqref{dtJu1} and \eqref{dtJu2}, we find
\begin{align*}
\tfrac{d}{dt}\|Ju\|_{L_x^2}^2 & = \tfrac{4(4-dp)}{p+2}\iint \sigma |w|^{p+2}\,dx\,d\sigma + \tfrac{4 t({4}-dp)}{p+2}\iint |w|^{p+2}\,dx\,d\sigma \\
& \quad-\tfrac{8({2}t+1)}{p+2}\int |w(1)|^{p+2}\,dx \\
& \quad- \tfrac{8t^2}{p+2}\tfrac{d}{dt}\iint |w|^{p+2}\,dx\,d\sigma.
\end{align*}
Therefore
\begin{equation}\label{dte}
\begin{aligned}
\tfrac{d}{dt}\bigl[\|Ju\|_{L_x^2}^2 + \tfrac{8 t^2}{p+2}\iint |w|^{p+2}\,dx\,d\sigma\bigr] & = -\tfrac{\frac{dp}{2}-{4}}{t} \tfrac{8t^2}{p+2}\iint |w|^{p+2}\,dx\,d\sigma \\
& \quad - \tfrac{\frac{dp}{2}-2}{t^2} \tfrac{8t^2}{p+2}\iint \sigma |w|^{p+2}\,dx\,d\sigma \\
& \quad - \tfrac{8(2t+1)}{p+2}\int |w(1)|^{p+2}\,dx,
\end{aligned}
\end{equation}
as desired. 
\end{proof}

From the pseudoconformal energy identity, we will derive the following decay estimates.  We note that in practice, we only use the bound on $Ju$ in $L^2$, but for the sake of completeness we provide the bounds obtained for $e^{i\sigma\Delta}u\in L_{\sigma,x}^{p+2}$ as well. 

\begin{proposition}[Decay estimates]\label{P:decay} Let $\tfrac{4}{d}<p\leq\tfrac{4}{d-2}$ and let $u$ be the global solution to \eqref{nls} with $u|_{t=0}\in\Sigma$. The following estimates holds uniformly in $t$:
\[
\|Ju\|_{L_x^2} \lesssim \begin{cases} 1 & p > \frac{8}{d}, \\ \langle t\rangle^{2-\frac{dp}{4}} & \frac{4}{d}<p\leq \frac{8}{d}, \end{cases}
\]
and
\[
\|w\|_{L_{\sigma,x}^{p+2}} \lesssim \begin{cases} \langle t\rangle^{-\frac{2}{p+2}} & p>\frac{8}{d}, \\ \langle t\rangle^{-\frac{dp-4}{2(p+2)}} & \frac{4}{d}< p \leq \frac{8}{d}.\end{cases}
\]
\end{proposition}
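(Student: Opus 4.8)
The plan is to integrate the pseudoconformal energy identity of Lemma~\ref{PCE} and run a Gronwall argument; the only real care needed is in tracking the sign of each term. Set
\[
\mathcal{E}(t):=\|Ju\|_{L_x^2}^2+\tfrac{8t^2}{p+2}\iint|w|^{p+2}\,dx\,d\sigma\geq 0,
\]
and abbreviate $A(t):=\tfrac{8t^2}{p+2}\iint|w|^{p+2}\,dx\,d\sigma$, $B(t):=\tfrac{8t^2}{p+2}\iint\sigma|w|^{p+2}\,dx\,d\sigma$, and $C(t):=\|e^{i\Delta}u(t)\|_{L_x^{p+2}}^{p+2}$, so that Lemma~\ref{PCE} reads
\[
\mathcal{E}'(t)=-\tfrac{\frac{dp}{2}-4}{t}A(t)-\tfrac{\frac{dp}{2}-2}{t^2}B(t)-\tfrac{8(2t+1)}{p+2}C(t).
\]
The inputs are: the trivial comparisons $0\leq B(t)\leq A(t)\leq\mathcal{E}(t)$ (using $\sigma\in[0,1]$); the uniform bound $C(t)\lesssim\|u(t)\|_{H_x^1}^{p+2}\lesssim 1$, which follows from Sobolev embedding (valid throughout the admissible range of $p$) together with the conservation laws and uniform $H^1$ bound of Proposition~\ref{P:GWP}; and the fact that $\mathcal{E}$ is continuous on $\R$ (since $u\in C(\R;\Sigma)$), hence bounded on $[-1,1]$. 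It therefore suffices to control $\mathcal{E}(t)$ for $|t|\geq 1$.

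For $t\geq 1$ the terms $-\tfrac{\frac{dp}{2}-2}{t^2}B(t)$ and $-\tfrac{8(2t+1)}{p+2}C(t)$ are non-positive (here one uses $p>\tfrac{4}{d}$ so that $\tfrac{dp}{2}-2>0$), and the main term is non-positive if $p\geq\tfrac{8}{d}$, while if $\tfrac{4}{d}<p<\tfrac{8}{d}$ it is at most $\tfrac{4-dp/2}{t}A(t)\leq\tfrac{4-dp/2}{t}\mathcal{E}(t)$. In all cases $\mathcal{E}'(t)\leq\tfrac{a}{t}\mathcal{E}(t)$ with $a:=(4-\tfrac{dp}{2})_+$, so Gronwall gives $\mathcal{E}(t)\lesssim t^a$. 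For $t\leq-1$ we argue identically in the variable $\tau=-t$: the main term and the $C$-term are handled exactly as before (the latter remains favorably signed for $\tau\geq 1$), but the $B$-term now changes sign and becomes a genuine forcing term, bounded by $\tfrac{b}{\tau^2}\mathcal{E}(-\tau)$ with $b:=\tfrac{dp}{2}-2>0$. Since $\tau^{-2}$ is integrable on $[1,\infty)$, the inequality $\tfrac{d}{d\tau}\mathcal{E}(-\tau)\leq(\tfrac{a}{\tau}+\tfrac{b}{\tau^2})\mathcal{E}(-\tau)$ still yields $\mathcal{E}(-\tau)\lesssim\tau^a$ by Gronwall; this is the `integrable lower-order contribution' that leaves the rate unaffected. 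Combining with the bound on $[-1,1]$ gives $\mathcal{E}(t)\lesssim\langle t\rangle^a$ for all $t$.

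The two claimed estimates now follow by reading off the two pieces of $\mathcal{E}$. From $\|Ju\|_{L_x^2}^2\leq\mathcal{E}(t)\lesssim\langle t\rangle^a$ we get $\|Ju\|_{L_x^2}\lesssim\langle t\rangle^{a/2}$, which equals $1$ when $p\geq\tfrac{8}{d}$ and $\langle t\rangle^{2-\frac{dp}{4}}$ when $\tfrac{4}{d}<p\leq\tfrac{8}{d}$. From $\iint|w|^{p+2}\,dx\,d\sigma=\tfrac{p+2}{8t^2}A(t)\leq\tfrac{p+2}{8t^2}\mathcal{E}(t)\lesssim\langle t\rangle^{a-2}$ for $|t|\geq 1$, together with $\iint|w|^{p+2}\,dx\,d\sigma\lesssim\|u(t)\|_{H_x^1}^{p+2}\lesssim 1$ for $|t|\leq 1$, we get $\|w\|_{L_{\sigma,x}^{p+2}}\lesssim\langle t\rangle^{(a-2)/(p+2)}$, which unwinds to $\langle t\rangle^{-\frac{2}{p+2}}$ for $p\geq\tfrac{8}{d}$ and $\langle t\rangle^{-\frac{dp-4}{2(p+2)}}$ for $\tfrac{4}{d}<p\leq\tfrac{8}{d}$.

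I expect the only genuine subtlety to be the sign bookkeeping. In contrast to the standard NLS pseudoconformal identity, the DMNLS identity carries two extra terms — the $\sigma$-weighted term $B$ and the boundary term $C$ — and although $C$ always has a good sign and $B$ is favorably signed for $t>0$, in the backward direction $B$ must be absorbed as a forcing term; the saving grace is that its coefficient decays like $t^{-2}$. The other structural feature worth flagging is that $a<2$ precisely when $p>\tfrac{4}{d}$, which is why the argument produces nontrivial (sublinear $Ju$-growth, decaying $L^{p+2}$) estimates only in that range. A minor technical point, which I would dispatch via the usual approximation by smooth solutions, is justifying the differentiated identity of Lemma~\ref{PCE} in integrated form for arbitrary data in $\Sigma$.
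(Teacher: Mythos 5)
Your proof is correct, and it follows the paper's overall strategy exactly: integrate the pseudoconformal energy identity of Lemma~\ref{PCE}, track signs, and close with Gronwall. The sign bookkeeping, the comparison $0\le B\le A\le\mathcal{E}$, the identification of $a=(4-\tfrac{dp}{2})_+$, the integrable $\tau^{-2}$-forcing from the $B$-term in the backward direction, and the final readout of both estimates all match the proposition.

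There is one place where you actually \emph{simplify} the paper's argument, and it is worth flagging. The paper splits the boundary term
\[
-\tfrac{8(2t+1)}{p+2}\int|w(1)|^{p+2}\,dx
\]
additively into a $(-16t)$-part (good sign for all $t$) and a $(-8)$-part, which it views as an unfavorably signed forcing term for $t<0$; it then controls $\|w(1)\|_{L_x^{p+2}}^{p+2}$ by $\langle t\rangle^{-2}e(t)$ via the Klainerman--Sobolev embedding, interpolation, and commutation of $J$. You instead keep the boundary term intact and observe that the coefficient $-(2t+1)$ already has the favorable sign on each of $\{t\ge 1\}$ and $\{t\le -1\}$, so the term may be discarded outright on both rays. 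This makes the Klainerman--Sobolev step unnecessary here. (Relatedly, your preparatory input $C(t)\lesssim\|u\|_{H^1}^{p+2}\lesssim 1$ is never actually used in the Gronwall argument and could be omitted.) Both routes give the identical differential inequality up to constants and hence the same rates; yours simply reaches it with one fewer estimate.
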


\begin{proof}  We will focus on proving estimates for $t\geq 0$, although we will be careful that our approach is valid for obtaining estimates for the full interval $t\in\R$.  

By local well-posedness in $C(\R;\Sigma)$, it suffices to estimate on the interval $t\in[1,\infty)$. We will obtain the desired estimates by using the pseudoconformal energy identity and Gronwall's inequality.  In particular we will obtain an upper bound on the time derivative of the pseudoconformal energy
\[
e(t):=\|Ju\|_{L_x^2}^2 + \tfrac{8t^2}{p+2}\iint |w|^{p+2}\,dx\,d\sigma.
\]
The reader should review the identity above, see e.g. \eqref{dte}, as we will refer to the terms in this identity in what follows. 

First consider the case $p>\tfrac{8}{d}$. In this case, the terms
\[
-\tfrac{\frac{dp}{2}-4}{t} \tfrac{8t^2}{p+2}\iint |w|^{p+2}\,dx\,d\sigma \qtq{and} -\tfrac{16t}{p+2}\int |w(1)|^{p+2}\,dx
\]
both have a good sign.  That is, they are negative for $t>0$ and positive for $t<0$.  Thus, they can be discarded in what follows. 

The remaining terms on the right-hand side of the pseudoconformal energy identity \eqref{dte} actually have a good sign for $t>0$, but not for $t<0$.  Thus we must estimate them in what follows. In particular, we obtain
\[
\tfrac{d}{dt}e \lesssim \langle t\rangle ^{-2} e + \|w(1)\|_{L_x^{p+2}}^{p+2},\quad p>\tfrac{8}{d}. 
\] 

When $\tfrac{4}{d}<p\leq\tfrac{8}{d}$, the term
\[
\tfrac{4-\frac{dp}{2}}{t}\tfrac{8t^2}{p+2}\iint |w|^{p+2}\,dx\,d\sigma
\]
now has a bad sign (positive for $t>0$ and negative for $t<0$) and must be included in the differential inequality, yielding
\[
\tfrac{d}{dt} e \leq \tfrac{4-\frac{dp}{2}}{t} e + C_1\langle t\rangle^{-2}e + C_2 \|w(1)\|_{L_x^{p+2}}^{p+2},\quad \tfrac{4}{d}<p\leq\tfrac{8}{d}. 
\]

We estimate the $w(1)$ term using the Klainerman--Sobolev embedding, interpolation, and the commutation properties of $J$.  First, we estimate
\begin{align*}
\|w(1)\|_{L_x^{p+2}} &\lesssim \| \bar M(t+1)w(1)\|_{L_x^{p+2}} \\
& \lesssim \| |\nabla|^{\frac{dp}{2(p+2)}} \bar M(t+1)w(1)\|_{L_x^2} \\
& \lesssim \|u\|_{L_x^2}^{1-\frac{dp}{2(p+2)}}\|\nabla \bar M(t+1)w(1)\|_{L_x^2}^{\frac{dp}{2(p+2)}} \\
& \lesssim_u |1+t|^{-\frac{dp}{2(p+2)}} \|J(1+t)w(1)\|_{L_x^2}^{\frac{dp}{2(p+2)}} \\
& \lesssim_u \langle t\rangle^{-\frac{dp}{2(p+2)}} \|J(t)u(t)\|_{L_x^2}^{\frac{dp}{2(p+2)}}.
\end{align*}
Thus, using $\tfrac{4}{d}< p< \tfrac{4}{d-2}$ and the usual Sobolev embedding (plus unitarity of the free propagator), we obtain
\begin{align*}
\|w(1)\|_{L_x^{p+2}}^{p+2} & \lesssim \|w(1)\|_{L_x^{p+2}}^{\frac{4(p+2)}{dp}}\|w(1)\|_{L_x^{p+2}}^{p+2-\frac{4(p+2)}{dp}}\\
& \lesssim_u \langle t\rangle^{-2}\|Ju\|_{L_x^2}^2 \|u\|_{H_x^1}^{p+2-\frac{4(p+2)}{dp}} \\
& \lesssim_u \langle t\rangle^{-2}e(t).
\end{align*}

Altogether, we obtain a differential inequality of the form 
\[
\tfrac{d}{dt}e(t) \leq f(t)e(t) + C\langle t\rangle^{-2}e(t),
\]
where
\[
f(t) = \begin{cases} 0 & p>\frac{8}{d} \\ \frac{4-\frac{dp}{2}}{t} & \tfrac{4}{d}<p\leq \tfrac{8}{d}. \end{cases}  
\]
Thus by Gronwall's inequality,
\[
\|Ju\|_{L_x^2}^2 + t^2\|w\|_{L_{\sigma,x}^{p+2}}^{p+2} \lesssim \begin{cases} 1 & p>\frac{8}{d} \\ t^{4-\frac{dp}{2}} & \tfrac{4}{d}<p\leq \frac{8}{d}\end{cases}
\]
for all $t\geq 1$.  Combining these with the bounds on $[0,1]$ yields the estimates appearing the statement of the proposition.\end{proof}

We now leverage the decay provided by Proposition~\ref{P:decay} to obtain critical space-time bounds for the solution.  We note that while the estimate obtained in Proposition~\ref{P:decay} for $Ju$ is worse than the corresponding estimate one obtains for the standard NLS (by a power of $t$ in the regime $\tfrac{4}{d}<p\leq\tfrac{8}{d}$), we can still gain quite a bit from this estimate by combining it with the Klainerman--Sobolev estimate and a Strichartz estimate (with respect to the $(\sigma,x)$ variables). Indeed, while the bound in Proposition~\ref{P:decay} degenerates to the trivial bound $\|Ju\|_{L^2}\lesssim \langle t\rangle$ as $p\downarrow\tfrac{4}{d}$, we will see that (in dimensions $d\geq 2$) we can use this estimate to obtain critical space-time bounds of the form $e^{i\sigma\Delta}u\in L_{\sigma,t}^{q_c} L_x^{r_c}$ with $q_c<\infty$ for the full mass-supercritical range.  (The argument will show that we must take $q_c\to\infty$ as $p\downarrow \tfrac{4}{d}.$) 

%We will only be able to accomplish this if we impose a suitable lower bound on $p$.
%
%\begin{definition} The `\emph{Strauss exponent}' $\alpha=\alpha(d)$ for DMNLS is defined as the positive root of $p\mapsto dp^2+p(d-6)-4$, i.e. 
%\[
%\alpha(d)=\tfrac{6-d+\sqrt{d^2+4d+36}}{2d}. 
%\]
%\end{definition}
%One can check that the ordering $\tfrac{4}{d}<\alpha(d)<\tfrac{4}{d-2}$ holds in general. }

In the following, we define the exponent
\[
p_0 = 3+\sqrt{5}\approx 5.2361
\]
In the case of dimension $d=1$, we will need to impose $p>p_0$.

\begin{proposition}[Critical bounds]\label{P:bounds} 

Let $\tfrac{4}{d}<p\leq\tfrac{4}{d-2}$ in dimensions $d\geq 3$, with $2<p<\infty$ in $d=2$.  There exists 
\[
q_c\in(p+1,\infty)
\]
and $r_c$ satisfying
\begin{equation}\label{critical-line}
\tfrac{2}{q_c}+\tfrac{d}{r_c}=\tfrac{2}{p}
\end{equation}
such that
\[
w=e^{i\sigma\Delta}u\in L_{\sigma,t}^{q_c}L_x^{r_c}([0,1]\times\R\times\R^d). 
\]

Furthermore, there exists a non-endpoint admissible Strichartz pair $(q,r)$ such that
\begin{equation}\label{companion-scaling}
\tfrac{p}{q_c}+\tfrac{2}{q} = 1 \qtq{and} \tfrac{p}{r_c} + \tfrac{2}{r} = 1. 
\end{equation} 

In dimension $d=1$ and $p_0<p<\infty$ we have
\[
w=e^{i\sigma\Delta}\in L_t^{2p} L_\sigma^\infty L_x^p([\R\times[0,1]\times\R).
%w=e^{i\sigma\Delta}u \in L_\sigma^\infty L_t^{2p} L_x^{p}([0,1]\times\R\times\R). 
\]
\end{proposition}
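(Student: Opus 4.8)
The plan is to deduce this one‑dimensional bound from the estimate on $\|J(t)u(t)\|_{L_x^2}$ in Proposition~\ref{P:decay}, combined with a one‑dimensional Gagliardo--Nirenberg inequality applied after conjugation by $\bar M(t+\sigma)=e^{-\frac{i|x|^2}{4(t+\sigma)}}$. The first step is to establish the pointwise‑in‑time decay estimate
\[
\|e^{i\sigma\Delta}u(t)\|_{L_x^p}\lesssim_u |t+\sigma|^{-(\frac12-\frac1p)}\,\|J(t)u(t)\|_{L_x^2}^{\frac12-\frac1p},\qquad \sigma\in[0,1].
\]
To prove it, I write $\|e^{i\sigma\Delta}u(t)\|_{L_x^p}=\|\bar M(t+\sigma)e^{i\sigma\Delta}u(t)\|_{L_x^p}$, apply the $d=1$ Gagliardo--Nirenberg inequality $\|f\|_{L_x^p}\lesssim\|f\|_{L_x^2}^{\frac12+\frac1p}\|\partial_x f\|_{L_x^2}^{\frac12-\frac1p}$, and use the identity $\partial_x[\bar M(\tau)g]=\tfrac{1}{2i\tau}\bar M(\tau)J(\tau)g$ (which follows from $J(\tau)=M(\tau)(2i\tau\nabla)M(-\tau)$), the commutation relation $J(t+\sigma)e^{i\sigma\Delta}=e^{i\sigma\Delta}J(t)$, unitarity of the propagators, and conservation of mass.

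Since $|t+\sigma|\sim\langle t\rangle$ for $\sigma\in[0,1]$ and $|t|\geq 2$, taking the supremum over $\sigma$ gives $\|w(t)\|_{L_\sigma^\infty L_x^p}\lesssim_u\langle t\rangle^{-(\frac12-\frac1p)}\|J(t)u(t)\|_{L_x^2}^{\frac12-\frac1p}$ on that range of $t$. I then substitute the two cases of Proposition~\ref{P:decay} with $d=1$: when $p>8$, $\|J(t)u(t)\|_{L_x^2}\lesssim 1$, so $\|w(t)\|_{L_\sigma^\infty L_x^p}^{2p}\lesssim\langle t\rangle^{-(p-2)}$, which is integrable on $[1,\infty)$ since $p>3$; when $4<p\leq 8$, $\|J(t)u(t)\|_{L_x^2}\lesssim\langle t\rangle^{2-\frac p4}$, so $\|w(t)\|_{L_\sigma^\infty L_x^p}^{2p}\lesssim\langle t\rangle^{-(p-2)(p-4)/4}$. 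The latter exponent exceeds $1$ in absolute value precisely when $(p-2)(p-4)>4$, i.e. $p^2-6p+4>0$, i.e. $p>3+\sqrt5=p_0$; this is exactly the origin of the constraint in the statement and the reason the range $4<p\leq p_0$ must be left open.

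For $t$ in the bounded interval $[-2,2]$ (which contains the singular set $\{t+\sigma=0\}$ for $\sigma\in[0,1]$) the estimate above is not directly usable, and instead I bound $\|e^{i\sigma\Delta}u(t)\|_{L_x^p}\lesssim\|u(t)\|_{H_x^1}\lesssim 1$ using the $d=1$ embedding $H^1\hookrightarrow L^p$ and the uniform $H_x^1$ bound from Proposition~\ref{P:GWP}, so that the $L_t^{2p}L_\sigma^\infty L_x^p$‑norm over $[-2,2]\times[0,1]\times\R$ is finite. Combining this with the large‑$|t|$ estimate yields $w\in L_t^{2p}L_\sigma^\infty L_x^p(\R\times[0,1]\times\R)$. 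I expect the computation of the threshold $p_0$ to be the delicate point rather than any conceptual difficulty: the pseudoconformal decay in $d=1$ is lossy, the time integral only barely converges, and the argument leaves no room to push below $p_0$.
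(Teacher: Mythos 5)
Your argument addresses only the third assertion of the proposition, namely the $d=1$ bound $w\in L_t^{2p}L_\sigma^\infty L_x^p$.  The main content of the proposition for $d\geq 2$ — the existence of a pair $(q_c,r_c)$ on the critical scaling line $\tfrac{2}{q_c}+\tfrac{d}{r_c}=\tfrac2p$ with $q_c>p+1$ for which $w\in L_{\sigma,t}^{q_c}L_x^{r_c}$, together with the companion admissible Strichartz pair $(q,r)$ satisfying $\tfrac{p}{q_c}+\tfrac2q=1$ and $\tfrac{p}{r_c}+\tfrac2r=1$ — is not touched.  The paper handles that case by defining an auxiliary admissible pair $(q_c,\rho)$ via $\tfrac{d}{r_c}=\tfrac{d}{\rho}-s_c$, applying the Klainerman--Sobolev embedding and the Strichartz estimate in $\sigma$, and then choosing $q_c$ above an explicit threshold $Q(d,p)$ so that the resulting $\|\langle t\rangle^{-s_c(1-c_1)}\|_{L_t^{q_c}}$ is finite; this requires $d\geq 2$ to keep the companion pair non-endpoint.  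As written, your proposal leaves a genuine gap for $d\geq 2$.

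For the portion you do treat ($d=1$, $p>p_0$), your argument is correct and essentially coincides with the paper's.  You apply the one-dimensional Gagliardo--Nirenberg inequality $\|f\|_{L^p}\lesssim\|f\|_{L^2}^{1-\gamma}\|\partial_x f\|_{L^2}^{\gamma}$, $\gamma=\tfrac12-\tfrac1p$, to $f=\bar M(t+\sigma)e^{i\sigma\Delta}u(t)$ and convert $\partial_x f$ into $\tfrac{1}{2(t+\sigma)}\bar M(t+\sigma) J(t+\sigma)e^{i\sigma\Delta}u(t)$, then commute $J(t+\sigma)e^{i\sigma\Delta}=e^{i\sigma\Delta}J(t)$ and invoke mass conservation.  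The paper instead uses its Klainerman--Sobolev lemma with the fractional power $J^{\gamma}$ followed by the interpolation $\|J^{\gamma}u\|_{L^2}\lesssim\|u\|_{L^2}^{1-\gamma}\|Ju\|_{L^2}^{\gamma}$; after unwinding the definitions these two chains of estimates are identical, producing the same pointwise decay $\|w(\sigma,t)\|_{L_x^p}\lesssim\langle t\rangle^{-\gamma}\|Ju(t)\|_{L_x^2}^{\gamma}$, the same exponent $(p-2)(p-4)/4$ in the $L_t^{2p}$ integrability test, and the same threshold $p_0=3+\sqrt5$.  Your bounded-time estimate via $H^1\hookrightarrow L^p$ is likewise equivalent to the paper's short-time Minkowski--Sobolev--Strichartz bound.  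The only substantive shortcoming is the omitted $d\geq 2$ case.
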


\begin{proof} 

Throughout the proof, we use the notation 
\[
s_c=\tfrac{d}{2}-\tfrac{2}{p}\in(0,1].
\]

We begin with the case $d\geq 2$.  We first consider arbitrary $q_c\in(\max\{p,2\},\infty)$ and define $r_c\in(\tfrac{dp}{2},\infty)$ via 
\[
\tfrac{2}{q_c}+\tfrac{d}{r_c}=\tfrac{2}{p}.
\]
We first verify the short-time estimate 
\begin{equation}\label{shorttime}
e^{i\sigma\Delta}u \in L_{\sigma,t}^{q_c} L_x^{r_c}([0,1]\times I \times\R^d)\qtq{for any finite}I. 
\end{equation}
To this end, we define $\rho\in[2,\infty)$ by 
\[
\tfrac{d}{r_c}=\tfrac{d}{\rho}-s_c.
\]
In particular, combining the scaling relations, we find that $(q_c,\rho)$ is a Schr\"odinger admissible pair. Thus by Sobolev embedding, the Strichartz estimate, and H\"older's inequality, we obtain
\begin{align*}
\|e^{i\sigma\Delta}u(t)\|_{L_{t,\sigma}^{q_c} L_x^{r_c}(I\times[0,1]\times\R^d)} & \lesssim \|e^{i\sigma\Delta}|\nabla|^{s_c} u\|_{L_{t,\sigma}^{q_c} L_x^\rho(I\times[0,1]\times\R^d)} \\
& \lesssim \| |\nabla|^{s_c} u(t)\|_{L_t^{q_c} L_x^2(I\times\R^d)} \\
& \lesssim |I|^{\frac{1}{q_c}} \| |\nabla|^{s_c} u(t)\|_{L_t^\infty L_x^2},
\end{align*}
which yields \eqref{shorttime}.

It remains to show that for suitable $(q_c,r_c)$ (in particular, for sufficiently large $q_c$), we have the long-time estimate
\begin{equation}\label{longtime}
e^{i\sigma\Delta}u \in L_{\sigma,t}^{q_c} L_x^{r_c}([0,1]\times\{|t|\geq 2\}\times\R^d).
\end{equation}
We impose $|t|\geq2$ so that $|t+\sigma|\geq \tfrac12|t|$ uniformly in $\sigma\in[0,1]$. 

In what follows, we let
\[
c_1 = \begin{cases} 0 & p>\tfrac{8}{d} \\ 2-\tfrac{dp}{4} & \tfrac{4}{d}<p\leq \tfrac{8}{d}.\end{cases}% c_2 = \begin{cases} \tfrac{2}{p+2} & p >\tfrac{8}{d}, \\ \frac{dp-4}{2(p+2)} & \tfrac{4}{d}<p\leq \tfrac{8}{d} \end{cases}, 
\]
so that the result of Proposition~\ref{P:decay} may be written
\begin{equation}\label{Prop52bds}
\|J(t)u(t)\|_{L_x^2} \lesssim \langle t\rangle^{c_1}.%\quad \|e^{i\sigma\Delta}u(t)\|_{L_{\sigma,x}^{p+2}} \lesssim \langle t\rangle^{-c_2}. 
\end{equation}
We also observe that for $s\in[0,1]$, 
\[
\|J^s(t) u(t)\|_{L_x^2} \lesssim \|u(t)\|_{L_x^2}^{1-s} \|J(t)u(t)\|_{L_x^2}^s \lesssim \langle t\rangle^{sc_1}. 
\]
Indeed, this follows from the representation $J^s(t) = e^{i\frac{|x|^2}{4t}}(-4t^2\Delta)^{\frac{s}{2}} e^{-i\frac{|x|^2}{4t}}$ and the interpolation estimate
\[
\| |\nabla|^s u\|_{L_x^2} \lesssim \|u\|_{L_x^2}^{1-s} \|\nabla u\|_{L_x^2}^s. 
\]

Defining $\rho$ as above, we now use the Klainerman--Sobolev embedding and Strichartz estimate to obtain (for $(t,\sigma,x)\in\{|t|\geq 2\}\times[0,1]\times\R^d$):
\begin{align*}
\|e^{i\sigma\Delta}u(t)\|_{L_{t,\sigma}^{q_c} L_x^{r_c}} & \lesssim \| |t+\sigma|^{-s_c} \| [-4(t+\sigma)^2\Delta]^{\frac{s_c}{2}} \overline{M(t+\sigma)} e^{i\sigma\Delta} u(t)\|_{L_\sigma^{q_c} L_x^{\rho}} \|_{L_{t}^{q_c} } \\
& \lesssim \| \langle t\rangle^{-s_c} \|J^{s_c}(t+\sigma) e^{i\sigma\Delta}u(t)\|_{L_\sigma^{q_c} L_x^{\rho}}\|_{L_{t}^{q_c}} \\
& \lesssim \|\langle t\rangle^{-s_c} \|e^{i\sigma\Delta}J^{s_c}(t)u(t)\|_{L_{\sigma}^{q_c} L_x^{\rho}} \|_{L_t^{q_c}} \\
& \lesssim \|\langle t\rangle^{-s_c} \|J^{s_c}u(t)\|_{L_x^2} \|_{L_t^{q_c}} \\
& \lesssim \| \langle t\rangle^{-s_c[1-c_1]}\|_{L_t^{q_c}}. 
\end{align*}

Recalling the definition of $c_1$ above, we deduce that
\[
u\in L_{\sigma,t}^{q_c} L_x^{r_c}([0,1]\times\R\times\R^d) \qtq{for} q_c> Q(d,p):=\begin{cases} \tfrac{2p}{dp-4} & p>\tfrac{8}{d} \\ \tfrac{8p}{(dp-4)^2} & \tfrac{4}{d}<p\leq \tfrac{8}{d}.\end{cases}
\]

We now consider the construction of the `companion' non-endpoint Strichartz pair $(q,r)$ satisfying \eqref{companion-scaling}.  We proceed as follows:  Fix $q_c\in (\max\{p+1,Q(d,p)\},\infty)$ and define $q$ via the first identity in \eqref{companion-scaling}.  In particular, we have $q\in(2,\infty)$.  We then choose $r$ such that $\tfrac{2}{q}+\tfrac{d}{r}=\tfrac{d}{2}$, which also implies the second identity in \eqref{companion-scaling}. For dimensions $d\geq 2$, we can therefore find the desired pairs $(q_c,r_c)$ and $(q,r)$.

The approach described above may also be applied when $d=1$, but seems to only work for $p\geq 6$ (in which case the time exponent of the companion Strichartz pair reaches the $L_t^4$ endpoint).  We present a modification that can cover the range $p>p_0$, where we recall $p_0 = 3+\sqrt{5}\approx 5.2361$.  In particular, it seems that we cannot treat the full range $p>4$ via this approach. 

We will show $e^{i\sigma\Delta}u\in L_\sigma^\infty L_t^{2p}L_x^p$.  For the short-time estimate, we use Minkowski's inequality and Sobolev embedding to obtain 
\begin{align*}
\|e^{i\sigma\Delta}u(t)\|_{L_t^{2p} L_\sigma^\infty L_x^{p}(I\times [0,1] \times \R)} & \lesssim \| \| e^{i\sigma\Delta} |\nabla|^{\frac12-\frac1p} u(t)\|_{L_\sigma^\infty L_x^2} \|_{L_t^{2p}} \\
& \lesssim \|u\|_{L_t^{2p} H_x^1} \lesssim |I|^{\frac{1}{2p}} \|u\|_{L_t^\infty H_x^1}. 
\end{align*}

For the long-time estimate (on $\{|t|\geq 2\}$), we use the Klainerman--Sobolev embedding to obtain
\begin{align*}
\|e^{i\sigma\Delta}u(t)\|_{ L_t^{2p} L_\sigma^\infty L_x^p} & \lesssim \| |t+\sigma|^{-[\frac12-\frac1p]} \|J^{\frac12-\frac1p}(t+\sigma)e^{i\sigma\Delta} u(t)\|_{L_\sigma^\infty L_x^2} \|_{L_t^{2p}} \\
& \lesssim \| \langle t\rangle^{-[\frac12-\frac1p]} \|J^{\frac12-\frac1p}(t)u(t)\|_{L_x^2}\|_{L_t^{2p}} \\
& \lesssim \|\langle t\rangle^{-[\frac12 - \frac1p][1-c_1]}\|_{L_t^{2p}}. 
\end{align*}
This norm is clearly finite for $p>8$, while for $4\leq p <8$ this requires
\[
(p-2)\cdot\tfrac{p-4}{4}>1,\qtq{i.e.} p>3+\sqrt{5}. 
\]

Thus for $p>p_0$ we obtain $e^{i\sigma\Delta}u\in L_t^{2p} L_\sigma^{\infty} L_x^p$, as desired. \end{proof}

It remains to use the critical bounds obtained in Proposition~\ref{P:bounds} to prove scattering.

\begin{proposition}[Scattering]\label{P:scatter} For $\tfrac{4}{d}<p\leq \tfrac{4}{d-2}$ (with $p_0<p<\infty$ in dimension $d=1$ and $2<p<\infty$ in dimension $d=2$) and $u_0\in \Sigma$, the solution $u$ to \eqref{nls} with $u|_{t=0}=u_0$ scatters in $\Sigma$.  That is, there exist $u_\pm\in\Sigma$ such that
\[
\lim_{t\to\pm\infty}\|e^{-it\Delta}u(t) - u_\pm\|_{\Sigma}=0.
\]
\end{proposition}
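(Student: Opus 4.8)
The plan is to upgrade the critical space-time bound from Proposition~\ref{P:bounds} to scattering in $\Sigma$ via the usual two-step scheme: first show that $e^{-it\Delta}u(t)$ is Cauchy in $\Sigma = H^1\cap\F H^1$ as $t\to\pm\infty$ using the Duhamel formula and the shifted Strichartz machinery, then identify the limits $u_\pm$. By the time-reversal symmetry it suffices to treat $t\to+\infty$, and by the already-established global $H^1$ and $\|Ju\|_{L^2}$ bounds (and the decay estimates of Proposition~\ref{P:decay}) the only genuinely new input is the finiteness of the critical Strichartz-type norm of $w=e^{i\sigma\Delta}u$ on $[0,1]\times\{t>2\}\times\R^d$.

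\textbf{Step 1: A global space-time bound on a full family of Strichartz norms.} Using the critical pair $(q_c,r_c)$ from Proposition~\ref{P:bounds} together with its companion admissible pair $(q,r)$ satisfying \eqref{companion-scaling}, I would first run a Duhamel/Strichartz estimate (in the $(\sigma,x)$ variables for the nonlinearity, using the shifted Strichartz estimate Proposition~\ref{P:shifted} to absorb the $e^{i(\theta-\sigma)\Delta}$ factors) to conclude
\[
e^{i\theta\Delta}u \in L_\theta^\infty L_t^q L_x^r([0,1]\times\R\times\R^d), \qquad
e^{i\theta\Delta}\nabla u \in L_\theta^\infty L_t^q L_x^r,
\]
and likewise with $\nabla$ replaced by $J$; the nonlinear term is controlled by splitting $\|F(w)\|_{L_t^{q'}L_\sigma^{?}L_x^{r'}} \lesssim \|w\|_{L_{\sigma,t}^{q_c}L_x^{r_c}}^p \|\nabla w\|_{L_\sigma^\infty L_t^q L_x^r}$ (and analogously for $J$), using the scaling relations \eqref{companion-scaling} and the chain rule / commutation identities for $J$. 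This is the standard ``Strichartz bootstrap from a single critical bound'' and the dimension-$1$ case with $p>p_0$ uses instead the $L_t^{2p}L_\sigma^\infty L_x^p$ bound in place of $L_{\sigma,t}^{q_c}L_x^{r_c}$.

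\textbf{Step 2: Cauchy in $\Sigma$.} With the global Strichartz bounds in hand, fix $t_1<t_2$ and write, via Duhamel,
\[
e^{-it_2\Delta}u(t_2)-e^{-it_1\Delta}u(t_1) = -i\int_0^1\int_{t_1}^{t_2} e^{-i(s+\sigma)\Delta}\bigl[|e^{i\sigma\Delta}u(s)|^p e^{i\sigma\Delta}u(s)\bigr]\,ds\,d\sigma.
\]
Applying $\nabla$ (resp. $J^1$, using $|x| e^{-i s\Delta} = e^{-is\Delta}J(s+\sigma)\circ(\text{commutation})$) and estimating by Strichartz in the same way as in Step 1 — but now with the $s$-integral restricted to $(t_1,t_2)$ — the right-hand side is bounded by a tail of the (finite) global space-time norms, hence tends to $0$ as $t_1\to\infty$ by dominated convergence. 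This gives that $\{e^{-it\Delta}u(t)\}$ is Cauchy in $H^1$ and in $\F H^1$, so it converges to some $u_+\in\Sigma$; the reverse Duhamel formula then yields $\|u(t)-e^{it\Delta}u_+\|_\Sigma\to 0$.

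\textbf{Main obstacle.} The only delicate point is bookkeeping the $L^2$-in-$\sigma$ (or $L^\infty$-in-$\sigma$) integrability in the nonlinear estimate: the DMNLS nonlinearity is an average over $\sigma\in[0,1]$ of $e^{-i\sigma\Delta}F(e^{i\sigma\Delta}u)$, and to close Step~1 one must check that the exponents $(q_c,r_c)$ and $(q,r)$ from Proposition~\ref{P:bounds} are mutually compatible — i.e. that $\tfrac{p}{q_c}+\tfrac{2}{q}=1=\tfrac{p}{r_c}+\tfrac{2}{r}$ really does let one place $p$ factors of $w$ in $L_{\sigma,t}^{q_c}L_x^{r_c}$ and one factor of $\nabla w$ (or $Jw$) in $L_\sigma^\infty L_t^q L_x^r$, with the $\sigma$-Hölder exponents summing correctly after the outer $d\sigma$-average. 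This is guaranteed by \eqref{companion-scaling} (which is exactly why that identity was recorded), and in $d=1$ the analogous check uses $\tfrac{1}{2p}+\tfrac{1}{2p}\cdot\tfrac{p}{?}$ bookkeeping with the $L_t^{2p}L_\sigma^\infty L_x^p$ norm; it requires no new ideas beyond those already deployed in Sections~\ref{S:T1}–\ref{S:T2}, but it is where care is needed.
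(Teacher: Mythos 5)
Your proposal matches the paper's argument: establish companion Strichartz bounds for $e^{i\theta\Delta}u$, $e^{i\theta\Delta}\nabla u$, and $e^{i\theta\Delta}Ju$ by a small-interval bootstrap off the finite critical norm from Proposition~\ref{P:bounds} (using the scaling relations \eqref{companion-scaling} exactly as you anticipate), then derive the Cauchy property in $\Sigma$ from the Duhamel formula restricted to $[t_1,t_2]$. One correction: you cite ``already-established'' $\|Ju\|_{L^2}$ bounds, but Proposition~\ref{P:decay} only gives $\|Ju\|_{L_x^2}\lesssim\langle t\rangle^{2-\frac{dp}{4}}$ when $\tfrac{4}{d}<p\le\tfrac{8}{d}$; the uniform $L_t^\infty L_x^2$ bound on $Ju$ is itself an output of the bootstrap (this is precisely why the paper tracks the $L_t^\infty L_x^2$ norm alongside the Strichartz norm in \eqref{Ju-bootstrap}), not an input to it.
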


\begin{proof} We continue using the notation from above, e.g. $w(\sigma)=U(\sigma)u$. 

We first consider the case $d\geq 2$ and let $(q_c,r_c)$ and $(q,r)$ be the exponent pairs obtained in Proposition~\ref{P:bounds}.  We begin by proving that
\[
w(\theta)\in L_\theta^{q_c} L_t^q L_x^r([0,1]\times\R\times\R^d).
\]
Without loss of generality, we focus on estimating on the domain $[0,1]\times[0,\infty)\times\R^d$. 

We let $\eta>0$ be a small parameter (to be determined more precisely below) and split $[0,\infty)$ into finitely many intervals $I_j=[t_j,t_{j+1}]$ such that 
\[
\|w(\theta)\|_{L_t^{q_c} L_\theta^{q_c} L_x^{r_c}(I_j\times[0,1]\times\R^d)} < \eta \qtq{for each}j. 
\]
We will show that
\begin{equation}\label{L2-based-bootstrap}
\|w(\theta)\|_{L_\theta^{q_c}L_t^q L_x^r([0,1]\times I_j\times\R^d)} \lesssim \|u(t_j)\|_{L_x^2} \lesssim_u 1 \qtq{for each} j,
\end{equation}
so that by the triangle inequality we derive $w\in L_\theta^{q_c} L_t^q L_x^r([0,1]\times[0,\infty)\times\R^d)$, as desired.  Here the uniform bound on the $L^2$-norm of $u$ is provided by mass conservation.

We have (with all estimates on time interval $I_j$):
\begin{align*}
\|e^{i\theta\Delta}u\|_{L_\theta^{q_c} L_t^q L_x^r} & \lesssim \|e^{i\theta\Delta}e^{i(t-t_j)\Delta}u(t_j)\|_{L_\theta^{q_c} L_t^q L_x^r} \\
& \quad + \biggl\| \int_0^t \int_0^1 e^{i(t-s+\theta-\sigma)\Delta}[|e^{i\sigma\Delta} u|^p e^{i\sigma\Delta} u]\,d\sigma\,ds\biggr\|_{L_\theta^{q_c} L_t^q L_x^r}.
\end{align*} 

For the linear term, we change variables in $t$ and apply Strichartz estimates to obtain a bound of
\begin{align*}
\|e^{i(t-t_j)\Delta}u(t_j)\|_{L_\theta^{q_c} L_t^q L_x^r} \lesssim \|e^{i(t-t_j)\Delta}u(t_j)\|_{L_t^q L_x^r} \lesssim \|u(t_j)\|_{L_x^2}.
\end{align*}
Using the shifted Strichartz estimate and H\"older's inequality (recalling the scaling relations \eqref{companion-scaling} and the fact that $q_c>p+1$), we next estimate the nonlinear term by
\begin{align*}
\int_0^1&\biggl\| \int_{t_j}^t  e^{i(t-s+\theta-\sigma)\Delta}[|e^{i\sigma\Delta} u|^p e^{i\sigma\Delta}u]\,ds\biggr\|_{L_\theta^{q_c} L_t^q L_x^r} \,d\sigma \\
& \lesssim \int_0^1 \| |e^{i\sigma\Delta} u|^p e^{i\sigma\Delta} u\|_{L_\theta^{q_c} L_t^{q'} L_x^{r'}}\,d\sigma \\
& \lesssim \int_0^1  \|e^{i\sigma\Delta}u\|_{L_t^{q_c}L_x^{r_c}}^p \|e^{i\sigma\Delta}u\|_{L_t^q L_x^r} \,d\sigma \\
& \lesssim \| e^{i\sigma\Delta}u\|_{L_{\sigma,t}^{q_c} L_x^{r_c}}^p \|e^{i\sigma\Delta} u\|_{L_\sigma^{q_c} L_t^q L_x^r}  \lesssim \eta^p \|e^{i\sigma\Delta}u\|_{L_\sigma^{q_c}L_t^q L_x^r}.
\end{align*}
Thus we can absorb this term into the left-hand side and thereby obtain \eqref{L2-based-bootstrap}.

A similar argument, using the uniform boundedness of $\nabla u$ in $L^2$ (a consequence of energy conservation), the chain rule, and the fact that $\nabla$ commutes with the free propagator, implies that
\[
\nabla w(\theta)\in L_\theta^{q_c} L_t^q L_x^r([0,1]\times\R\times\R^d).
\]

We now wish to prove that 
\[
e^{i\theta\Delta}J(t)u(t) = J(t+\theta)w(\theta)\in L_\theta^{q_c} L_t^q L_x^r([0,1]\times\R\times\R^d).
\]
This requires a slightly modified argument, as we do not yet know that $J(t)u(t)\in L_t^\infty L_x^2$ for the full range of $p$ under consideration. Restricting again to an interval $I_j=[t_j,t_{j+1}]$, we wish to prove that
\begin{equation}\label{Ju-bootstrap}
\|Ju\|_{L_t^\infty L_x^2} + \|e^{i\theta\Delta}Ju\|_{L_\theta^{q_c}L_t^q L_x^r} \lesssim \|J(t_j)u(t_j)\|_{L_x^2} + \eta^p \|e^{i\theta\Delta}Ju\|_{L_\theta^{q_c} L_t^q L_x^r}.
\end{equation}
Indeed, we can then absorb the last term on the right-hand side back into the left-hand side and obtain 
\[
\|Ju\|_{L_t^\infty L_x^2} + \|e^{i\theta\Delta}Ju\|_{L_\theta^{q_c}L_t^q L_x^r} \lesssim \|J(t_j)u(t_j)\|_{L_x^2}. 
\]
This shows that the $L^2$-norm of $Ju$ can at most double (say) on each $I_j$, so that in particular $Ju$ is bounded in $L_t^\infty L_x^2$ globally in time. Returning to \eqref{Ju-bootstrap}, this then gives uniform control of $e^{i\theta\Delta}Ju$ in $L_\theta^{q_c}L_t^q L_x^r$ on each $I_j$, and hence globally in time.  Thus it remains to establish \eqref{Ju-bootstrap}. 

We begin with the $L_\theta^{q_c} L_t^q L_x^r$ estimate. The contribution of the linear term is bounded by
\begin{align*}
\| e^{i\theta\Delta}J(t)e^{i(t-t_j)\Delta}u(t_j)\|_{L_\theta^{q_c} L_t^q L_x^r} & = \|e^{i[\theta+t-t_j]\Delta} J(t_j)u(t_j)\|_{L_\theta^{q_c}L_t^q L_x^r} \\
& \lesssim \|e^{i(t-t_j)\Delta} J(t_j)u(t_j)\|_{L_\theta^{q_c} L_t^q L_x^r([0,1]\times\R\times\R^d)} \\
& \lesssim \|J(t_j)u(t_j)\|_{L_x^2}. 
\end{align*} 
For the nonlinear term, we estimate using the shifted Strichartz estimate, commutation properties of $J$, and the chain rule for $J$: 
\begin{align*}
\biggl\| & e^{i\theta\Delta}J(t)\int_{t_j}^t\int_0^1 e^{i(t-s-\sigma)\Delta}[|e^{i\sigma\Delta}u|^p e^{i\sigma\Delta}u]\,ds\,d\sigma\biggr\|_{L_\theta^{q_c} L_t^q L_x^r} \\
& \lesssim \int_0^1 \biggl\| \int_{t_j}^t e^{i(t-s+\theta-\sigma)\Delta}J(s+\sigma)[|e^{i\sigma\Delta}u|^p e^{i\sigma\Delta}u]\,ds\biggr\|_{L_\theta^{q_c} L_t^q L_x^r} \,d\sigma \\
& \lesssim \int_0^1 \| J(t+\sigma)[|e^{i\sigma\Delta}u|^p e^{i\sigma\Delta}u]\|_{L_\theta^{q_c} L_t^{q'}L_x^{r'}} \,d\sigma \\
& \lesssim \int_0^1 \|e^{i\sigma\Delta}u\|_{L_t^{q_c} L_x^{r_c}}^p \| J(t+\sigma)e^{i\sigma\Delta}u\|_{L_t^q L_x^r}\,d\sigma \\
& \lesssim \|e^{i\sigma\Delta}u\|_{L_\sigma^{q_c}L_t^{q_c}L_x^{r_c}}^p \|e^{i\sigma\Delta}Ju\|_{L_\sigma^{q_c}L_t^q L_x^{r}} \lesssim \eta^p\|e^{i\sigma\Delta}Ju\|_{L_\sigma^{q_c}L_t^q L_x^{r}} 
\end{align*}
as desired. 

We turn to the $L_t^\infty L_x^2$ estimate. We fix $t\in[t_j,t_{j+1}]$. The contribution of the linear term is bounded by
\[
\|J(t)e^{i(t-t_j)\Delta}u(t_j)\|_{L_x^2} = \|e^{it\Delta} J(t_j)u(t_j)\|_{L_x^2} = \|J(t_j)u(t_j)\|_{L_x^2}.
\]
For the nonlinear term, we estimate similarly to the above:
\begin{align*}
\biggl\| &J(t)\int_{t_j}^t\int_0^1 e^{i(t-s-\sigma)\Delta}[|e^{i\sigma\Delta}u|^p e^{i\sigma\Delta}u]\,d\sigma\,ds \biggr\|_{L_x^2} \\
& = \biggl\| \int_{t_j}^t\int_0^1 e^{i(t-s-\sigma)\Delta}J(s+\sigma)[|e^{i\sigma\Delta}u|^p e^{i\sigma\Delta}u]\,d\sigma\,ds \biggr\|_{L_x^2} \\
%& = \biggl\| \int_{t_j}^t e^{-is\Delta}J(s+\sigma)[|e^{i\sigma\Delta}u|^p e^{i\sigma\Delta}u]\,d\sigma\,ds \biggr\|_{L_x^2} \\
& \leq \int_0^1 \biggl\|\int_{t_j}^t e^{-is\Delta}J(s+\sigma)[|e^{i\sigma\Delta}u|^p e^{i\sigma\Delta}u]\,ds \biggr\|_{L_x^2} \,d\sigma \\
& \lesssim \int_0^1  \|J(t+\sigma)[|e^{i\sigma\Delta}u(t)|^p e^{i\sigma\Delta}u(t)]\|_{L_t^{q'}L_x^{r'}} \,d\sigma \\
& \lesssim \| e^{i\sigma\Delta}u\|_{L_\sigma^{q_c} L_t^{q_c} L_x^{r_c}}^p \|e^{i\sigma\Delta}Ju\|_{L_\sigma^{q_c} L_t^q L_x^r}  \lesssim \eta^p \|e^{i\sigma\Delta}Ju\|_{L_\sigma^{q_c} L_t^q L_x^r},
\end{align*}
as desired.  We conclude that \eqref{Ju-bootstrap} holds. 

At this point, we have established global bounds for $e^{i\theta\Delta}u$, $e^{i\theta\Delta}\nabla u$, and $e^{i\theta\Delta}Ju$ in $L_\theta^{q_c} L_t^q L_x^r$. We turn now to the proof of scattering.  Without loss of generality, we consider scattering forward in time. We will show $\{e^{-it\Delta}u(t)\}$ is Cauchy in $\Sigma$ as $t\to\infty$, beginning with the $L^2$-norm.

We estimate as above: for $t_1<t_2$, 
\begin{align*}
\|&e^{-it_2\Delta}u(t_2)-e^{-it_1\Delta}u(t_1)\|_{L_x^2} \\
& \lesssim \biggl\| \int_{t_1}^{t_2} \int_0^1 e^{-i(s+\sigma)\Delta}[|e^{i\sigma\Delta}u|^p e^{i\sigma\Delta}u]\,d\sigma\,ds\biggr\|_{L_x^2} \\
& \lesssim \int_0^1 \biggl\| \int_{t_1}^{t_2} e^{-is\Delta}[|e^{i\sigma\Delta}u|^p e^{i\sigma\Delta}u]\,ds\biggr\|_{L_x^2} \,d\sigma\\
& \lesssim \int_0^1 \|e^{i\sigma\Delta}u\|_{L_t^{q_c} L_x^{r_c}([t_1,\infty)\times\R^d)}^p \|e^{i\sigma\Delta}u\|_{L_t^q L_x^r} \,d\sigma \\
& \lesssim \|e^{i\sigma\Delta}u\|_{L_{t,\sigma}^{q_c} L_x^{r_c}([t_1,\infty)\times[0,1]\times\R^d)}^p \|e^{i\sigma\Delta} u\|_{L_\sigma^{q_c} L_t^q L_x^r}  \to 0 \qtq{as}t_1\to\infty. 
\end{align*}
Using the fact that $\nabla$ commutes with $e^{i\sigma\Delta}$ and the chain rule, we can similarly obtain that $\nabla e^{-it\Delta} u(t)$ is Cauchy in $L^2$ as $t\to\infty$.  Finally, using the commutation properties of $J(t)$ as above, we can also obtain that $xe^{-it\Delta}u(t)$ is Cauchy in $L^2$ as $t\to\infty$.  Indeed, we have by the estimates used above that
\begin{align*}
\|&xe^{-it_2\Delta}u(t_2) - xe^{-it_1\Delta}u(t_1)\|_{L_x^2} \\
& \lesssim \biggl\| \int_{t_1}^{t_2} xe^{-i(s+\sigma)\Delta}[|e^{i\sigma\Delta}u|^p e^{i\sigma\Delta}u]\,d\sigma\,ds\biggr\|_{L_x^2} \\
& \lesssim \int_0^1 \biggl\| \int_{t_1}^{t_2} e^{-is\Delta}J(s+\sigma)[|e^{i\sigma\Delta}u|^p e^{i\sigma\Delta}u]\,ds\biggr\|_{L_x^2}\,d\sigma \\
& \lesssim \|e^{i\sigma\Delta}u\|_{L_{t,\sigma}^{q_c} L_x^{r_c}([t_1,\infty)\times[0,1]\times\R^d)}^p \|e^{i\sigma\Delta}Ju\|_{L_\sigma^{q_c} L_t^q L_x^r}  \to 0 \qtq{as}t_1\to\infty,
\end{align*}
which finally completes the proof in the case $d\geq 2$. 

It remains to treat the case $d=1$, in which case we have the estimate $e^{i\sigma\Delta}u \in L_t^{2p} L_\sigma^\infty L_x^p$. The `companion' Strichartz pair in this case will be $L_t^4 L_x^\infty$.  Let us show, for example, how to obtain $e^{i\sigma\Delta} Ju\in L_\sigma^\infty L_t^4 L_x^\infty$.  This will demonstrate the basic nonlinear estimate, and then the proof of scattering in $\Sigma$ follows essentially exactly as above. 

As before, let us split into an interval $I=(t_0,t_1)$ where the $L_t^{2p} L_\sigma^{\infty} L_x^p$-norm of $e^{i\sigma\Delta}u$ is small.  (In particular, the $L_\sigma^\infty L_t^{2p} L_x^p$-norm is small, as well.) Using the chain rule for $J$ and commutation properties of $J$, along with the shifted Strichartz estimate, we obtain
\begin{align*}
\|&e^{i\theta\Delta} Ju\|_{L_\theta^\infty L_t^4 L_x^\infty (I)}&\\
& \lesssim \|J(t_0)u(t_0)\|_{L_x^2} + \biggl\| \int_0^t\int_0^1 e^{i(t+\theta-s-\sigma)\Delta}J(s+\sigma)[|w|^p w]\,d\sigma\biggr\|_{ L_\theta^\infty L_t^4 L_x^\infty(I)} \\
& \lesssim \|J(t_0)u(t_0)|\|_{L_x^2} + \int_0^1 \|J(t+\sigma)[|w|^p w]\|_{L_t^\frac43 L_x^1(I)}\,d\sigma \\
& \lesssim \|J(t_0)u(t_0)\|_{L_x^2} +\int_0^1 \|w\|_{L_t^{2p} L_x^{p}(I)}^p \|J(t+\sigma)e^{i\sigma\Delta}u(t)\|_{L_t^4 L_x^\infty(I)}\,d\sigma \\
& \lesssim \|J(t_0)u(t_0)\|_{L_x^2} + \|e^{i\sigma\Delta}u\|_{L_\sigma^\infty L_t^{2p} L_x^p(I)}^p \|e^{i\sigma\Delta} Ju\|_{L_\sigma^\infty L_{t}^4 L_x^\infty}. \\
%& \lesssim \|J(t_0)u(t_0)\|_{L_x^2} + \|e^{i\sigma\Delta} u\|_{L_\sigma^{\infty} L_t^{2p} L_x^p(I)}^p \|e^{i\theta\Delta} Ju\|_{L_\theta^\infty L_t^4 L_x^\infty(I)}.
\end{align*}
Thus, the $L_\sigma^\infty L_t^4 L_x^\infty$-norm of $e^{i\sigma\Delta}Ju$ can at most double (say) on any such interval, and hence (as above) we derive global $L_\sigma^\infty L_t^4 L_x^\infty$-bounds for $e^{i\sigma\Delta}Ju$.  Estimating as above, we finally obtain scattering in $\Sigma$ in the case $d=1$ as well.  In particular, to prove that $e^{-it\Delta}u(t)$ is Cauchy in $H^{1,1}$ will ultimately rely on the fact that
\[
\lim_{t_1\to\infty} \|e^{i\sigma\Delta}u\|_{L_t^{2p} L_\sigma^\infty L_x^p((t_1,\infty)\times[0,1]\times\R)} = 0,
\]
which follows from the dominated convergence theorem. \end{proof}

\section{Failure of scattering for sufficiently low powers}\label{S:T4}

 In this section we prove Theorem~\ref{T4} (reproduced here as Theorem~\ref{T4_informal}), showing that (unmodified) scattering must fail if the power of the nonlinearity is too low.  We adapt the argument of \cite{Glassey, Strauss}.

\begin{theorem}\label{T4_informal} Let $0<p\leq 1$ and $p\leq \tfrac{2}{d}$.  Suppose $u$ is a forward-global solution to \eqref{nls} such that
\[
\|u(t)-e^{it\Delta}\varphi\|_{L_x^2} = 0 \qtq{as}t\to\infty
\]
for some $\varphi\in L^2$.  Then $\varphi\equiv 0$. 
\end{theorem}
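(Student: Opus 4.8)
The plan is to adapt the classical argument of \cite{Glassey, Strauss}: I would track the scalar quantity $a(t):=\langle u(t),e^{it\Delta}\varphi\rangle$, where $\langle f,g\rangle:=\int_{\R^d}f\bar g\,dx$, and show that it cannot converge unless $\varphi\equiv0$. Without loss of generality I treat $t\to+\infty$. Suppose toward a contradiction that $\varphi\not\equiv 0$; writing $v(t)=e^{-it\Delta}u(t)$, the hypothesis reads $v(t)\to\varphi$ in $L_x^2$, so $\sup_t\|u(t)\|_{L_x^2}<\infty$ and $a(t)=\langle v(t),\varphi\rangle\to\|\varphi\|_{L_x^2}^2$. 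Using \eqref{nls} in the form $\partial_t u=i\Delta u-iN(u)$ with $N(u)=\int_0^1 e^{-i\sigma\Delta}[|e^{i\sigma\Delta}u|^p e^{i\sigma\Delta}u]\,d\sigma$, together with the self-adjointness of $\Delta$, the two linear contributions to $\tfrac{d}{dt}a(t)$ cancel and
\[
a'(t)=-i\int_0^1\bigl\langle |e^{i\sigma\Delta}u(t)|^p e^{i\sigma\Delta}u(t),\,e^{i(t+\sigma)\Delta}\varphi\bigr\rangle\,d\sigma.
\]
Since $a$ has a finite limit, $\int_1^T a'(t)\,dt$ converges as $T\to\infty$; taking imaginary parts, $\int_1^T \Re\langle N(u(t)),e^{it\Delta}\varphi\rangle\,dt$ converges.

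The heart of the matter is a lower bound on the diagonal part of this quantity. Setting $w_\sigma:=e^{i\sigma\Delta}u(t)=e^{i(t+\sigma)\Delta}v(t)$ and $g_\sigma:=e^{i(t+\sigma)\Delta}\varphi$, I would split $\langle|w_\sigma|^p w_\sigma,g_\sigma\rangle=\|g_\sigma\|_{L_x^{p+2}}^{p+2}+\langle|w_\sigma|^p w_\sigma-|g_\sigma|^p g_\sigma,g_\sigma\rangle$ and write $R(t):=\int_0^1\|e^{i(t+\sigma)\Delta}\varphi\|_{L_x^{p+2}}^{p+2}\,d\sigma\ge0$. Using the factorization \eqref{dollard}, $\|e^{is\Delta}\varphi\|_{L_x^{p+2}}^{p+2}=|2s|^{-\frac{dp}{2}}\|\F M(s)\varphi\|_{L_x^{p+2}}^{p+2}$. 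Since $M(s)\varphi\to\varphi$ in $L_x^2$ as $s\to\infty$ by dominated convergence (uniformly for $s$ in windows of unit length, using $|M(s)\varphi-\varphi|\le\min(2,|x|^2/4t)|\varphi|$ for $s\ge t$), one gets $\F M(s)\varphi\to\F\varphi\not\equiv0$ in $L_x^2$; pairing against a fixed $\psi\in L_x^2\cap L_x^{(p+2)'}$ with $\langle\F\varphi,\psi\rangle\ne0$ and applying H\"older's inequality yields $\|\F M(s)\varphi\|_{L_x^{p+2}}\gtrsim1$ for $s$ large. Hence $R(t)\gtrsim\int_0^1|t+\sigma|^{-\frac{dp}{2}}\,d\sigma\gtrsim|t|^{-\frac{dp}{2}}$ for large $t$, and $|t|^{-dp/2}$ fails to be integrable at infinity precisely because $p\le\tfrac2d$. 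This step uses only the dispersive/Dollard structure and needs nothing beyond $\varphi\in L^2$.

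Finally I would absorb the error $\mathcal E(t):=\int_0^1\langle|w_\sigma|^p w_\sigma-|g_\sigma|^p g_\sigma,g_\sigma\rangle\,d\sigma$. Using $p\le1$ together with the elementary bound $\bigl||a|^p a-|b|^p b\bigr|\lesssim|a-b|^{p+1}$, H\"older's inequality, the Dollard scaling applied to $w_\sigma-g_\sigma=e^{i(t+\sigma)\Delta}(v(t)-\varphi)$, and the a priori bounds available for $u$ (which control $e^{i\sigma\Delta}u$ in the relevant Lebesgue norms uniformly in $t$ and, via weak compactness, confer the same control on $\varphi$), I would show $|\mathcal E(t)|=o(R(t))$ as $t\to\infty$, the crucial input being $\|v(t)-\varphi\|_{L_x^2}\to0$. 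Then $\Re\langle N(u(t)),e^{it\Delta}\varphi\rangle=R(t)+o(R(t))\ge\tfrac12R(t)$ for $t$ large, so $\int_1^T\Re\langle N(u(t)),e^{it\Delta}\varphi\rangle\,dt\gtrsim\int^T|t|^{-\frac{dp}{2}}\,dt\to\infty$, contradicting the convergence established in the first paragraph; hence $\varphi\equiv0$. I expect the error estimate to be the main obstacle: it is the step that must upgrade the weak ($L^2$) scattering hypothesis to control of a genuinely nonlinear expression, and it is where both the restriction $p\le1$ (for the H\"older-type bound on $|a|^p a$) and the a priori regularity of the solution enter.
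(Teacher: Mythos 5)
Your proposal has two genuine gaps, one structural and one technical.

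The structural gap is that you pair the nonlinearity against $e^{it\Delta}\varphi$ with $\varphi$ merely in $L^2$, whereas the paper pairs against $z(t)=e^{it\Delta}\psi$ for an auxiliary $\psi\in\mathcal{S}$ chosen so that $C_0=\langle|\hat\varphi|^p\hat\varphi,\hat\psi\rangle>0$. This is not a cosmetic difference. To control $\langle|w_\sigma|^pw_\sigma,g_\sigma\rangle$ via H\"older with $w_\sigma\in L_x^2$, you must place $g_\sigma$ in $L_x^{\frac{2}{1-p}}$; and to extract the decisive $t^{-dp/2}$ decay one needs $\|e^{is\Delta}g\|_{L_x^{\frac{2}{1-p}}}\lesssim |s|^{-\frac{dp}{2}}$, which the dispersive estimate supplies only when the datum lies in $L_x^{\frac{2}{1+p}}$ (hence the Schwartz choice of $\psi$). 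With $g_\sigma=e^{i(t+\sigma)\Delta}\varphi$ and $\varphi\in L^2$ only, neither the convergence of $R(t)=\int_0^1\|e^{i(t+\sigma)\Delta}\varphi\|_{L^{p+2}_x}^{p+2}\,d\sigma$ nor even the absolute convergence of the pairing defining $a'(t)$ is available; your lower bound via testing $\F M(s)\varphi$ against a fixed $\psi\in L^{(p+2)'}$ gives $\|\F M(s)\varphi\|_{L^{p+2}}\gtrsim 1$ only conditionally on that norm being finite, which is precisely what you don't know. The paper's device of replacing $\varphi$ by $\psi$ in the test role — while keeping $\varphi$ inside the asymptotic profile — is exactly what makes all of these quantities finite, integrable in $\sigma$, and estimable.

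The technical gap is your claimed elementary inequality $\bigl||a|^pa-|b|^pb\bigr|\lesssim|a-b|^{p+1}$. This is false for every $p>0$: take $a=N+1$, $b=N$ with $N$ large, so the left side is $\approx (p+1)N^p\to\infty$ while the right side is $1$. The correct bound for $0<p\le 1$ is $\bigl||a|^pa-|b|^pb\bigr|\lesssim(|a|^p+|b|^p)|a-b|$, which is the one the paper uses and which, on its own, forces you to control the $L^{\frac{2}{1-p}}$-norm of the test function — again pointing to the need for the auxiliary Schwartz $\psi$. (Once that substitution is made, your remaining steps — the cancellation of the linear terms, the use of the Dollard factorization $U(s)=M(s)D(s)\F M(s)$, the identification of the leading $t^{-dp/2}$ behavior, and the non-integrability for $p\le\frac2d$ — line up with the paper's argument, and the vanishing of $[M(s)-1]$ on $L^2$ plays the same role in both.)
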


\begin{proof}
We suppose towards a contradiction that $\varphi\neq 0$.  We define $v(t)=e^{it\Delta}\varphi$ and choose $\psi\in\mathcal{S}$ satisfying
\[
C_0:=\langle |\hat \varphi|^p \hat\varphi,\hat\psi\rangle  >0.
\]
Defining $z(t)=e^{it\Delta}\psi$, direct computation yields
\begin{align}
i\partial_t \langle u(t),z(t)\rangle & = \int_0^1 \langle |U(\sigma)u(t)|^p U(\sigma)u(t),U(\sigma)z(t)\rangle\,d\sigma \nonumber \\ 
&=  \int_0^1 \langle |U(\sigma)v(t)|^p U(\sigma)v(t),U(\sigma)z(t)\rangle\,d\sigma \label{GSB-main}\\
& \ + \int_0^1 \langle |U(\sigma)u(t)|^p U(\sigma)u(t)-|U(\sigma)v(t)|^p U(\sigma)v(t),U(\sigma)z(t)\rangle\,d\sigma. \label{GSB-error}
\end{align}

The term \eqref{GSB-error} is treated as an error term. By H\"older's inequality, the dispersive estimate, and the fact that $\psi\in\mathcal{S}$,
\begin{align*}
|\eqref{GSB-error}| &\leq \int_0^1 \| |U(\sigma)u|^p U(\sigma)u - |U(\sigma) v|^p U(\sigma)v\|_{L_x^{\frac{2}{p+1}}} \|U(\sigma+t)\psi\|_{L_x^{\frac{2}{1-p}}}\,d\sigma \\
& \lesssim_\psi \int_0^1 [\|U(\sigma)u\|_{L_x^2}^p +\|U(\sigma)v\|_{L_x^2}^p]\|U(\sigma)[u-v]\|_{L_x^2} |\sigma+t|^{-\frac{dp}{2}}\,d\sigma \\
& \lesssim_\psi \int_0^1 \|\varphi\|_{L_x^2}^p \|u(t)-v(t)\|_{L_x^2} |t|^{-\frac{dp}{2}}\,d\sigma \\
& \lesssim_{\varphi,\psi} |t|^{-\frac{dp}{2}}\|u(t)-v(t)\|_{L_x^2} \\
& = o(t^{-\frac{dp}{2}})\qtq{as}t\to\infty. 
\end{align*}

We now use the factorization
\[
U(t)=M(t)D(t)\F M(t)
\]
(see \eqref{dollard} for this notation) to extract the leading order term in \eqref{GSB-main}.  We begin by writing
\begin{align*}
\eqref{GSB-main} & = \int_0^1 \langle |U(t+\sigma)\varphi|^p U(t+\sigma)\varphi,U(t+\sigma)\psi\rangle\,d\sigma \\
& = \int_0^1 [2(t+\sigma)]^{-\frac{dp}{2}} \langle |\F M(t+\sigma)\varphi|^p \F M(t+\sigma)\varphi,\F M(t+\sigma)\psi\rangle\,d\sigma. 
\end{align*}

We now claim that for any $\eps>0$, there exists $T$ sufficiently large such that for any $s\geq T$, we have
\begin{align*}
|\langle |\F M(s)\varphi|^p \F M(s)\varphi, \F M(s)\psi\rangle - \langle |\hat\varphi|^p \hat \varphi,\hat\psi\rangle| < \eps. 
\end{align*}
To prove this we write
\begin{align}
|\langle & |\F M(s)\varphi|^p \F M(s)\varphi, \F M(s)\psi\rangle - \langle |\hat\varphi|^p \hat \varphi,\hat\psi\rangle| \nonumber \\
& \leq  |\langle |\F M(s)\varphi|^p \F M(s)\varphi - |\hat\varphi|^p \hat \varphi, \F M(s)\psi\rangle | \label{FM1}\\
& \quad + |\langle |\hat\varphi|^p \hat\varphi, \F[M(s)-1]\psi\rangle |\label{FM2}
\end{align}
 The first term is estimated by H\"older's inequality, the Plancherel Theorem, the Hausdorff--Young inequality, and the dominated convergence theorem:
 \begin{align*}
 |\eqref{FM1}|&\lesssim \| [|\F M\varphi|^p + |\hat\varphi|^p]\,|\F[M-1]\varphi|\,|\F M(s)\psi| \|_{L_x^1} \\
 & \lesssim \|\varphi\|_{L_x^2}^p\| \F[M-1]\varphi\|_{L_x^2} \|\F M(s)\psi\|_{L_x^{\frac{2}{1-p}}} \\
 & \lesssim \|\varphi\|_{L_x^2}^p \|\psi\|_{L_x^{\frac{2}{p+1}}} \|[M(s)-1]\varphi\|_{L_x^2} \\
 & \to 0 \qtq{as}s\to\infty.
 \end{align*}
 The second term is similarly estimated by
 \begin{align*}
|\eqref{FM2}|& \lesssim \|\hat\varphi\|_{L_x^2}^{p+1} \|\F[M(s)-1]\psi\|_{L_x^{\frac{2}{1-p}}} \\
& \lesssim \|\varphi\|_{L_x^2}^{p+1} \|[M(s)-1]\psi\|_{L_x^{\frac{2}{1+p}}} \\
& \to 0 \qtq{as}s\to\infty. 
 \end{align*}
 Thus, continuing from \eqref{GSB-main} and recalling the definition of $C_0$, we find that
 \[
 \eqref{GSB-main}  = C_0\int_0^1 [2(t+\sigma)]^{-\frac{dp}{2}}\,d\sigma + o(t^{-\frac{dp}{2}}) \qtq{as}t\to\infty. 
 \]
 
Combining the estimates above, we find that
 \[
 \partial_t \Im\langle u(t),z(t)\rangle \geq \tfrac12 C_0 t^{-\frac{dp}{2}}
 \]
 for all $t$ sufficiently large. To complete the proof, we observe that since $\tfrac{dp}{2}\leq1$, this lower bound contradicts the fact that
 \[
 |\langle u(t),z(t)\rangle| \lesssim \|\varphi\|_{L_x^2}\|\psi\|_{L_x^2}\lesssim 1
 \]
 uniformly in $t$. \end{proof}

 As mentioned in the introduction, in the case $d=1$ and $1<p\leq 2$ we also expect that unmodified scattering should fail.  In the case of the usual power-type NLS, these cases were addressed by \cite{Barab}, which utilized the pseudoconformal energy estimate.  As we failed to obtain any decay from the pseudoconformal energy estimate in the range $p<\tfrac{4}{d}$, the case $d=1$ and $1<p\leq 2$ remains open for \eqref{nls}.

\end{document}